\theoremstyle{plain}
\newtheorem*{main}{Main Theorem}
\newtheorem{lemma}{Lemma}
\newtheorem{corollary}{Corollary}
\newtheorem{proposition}{Proposition}
\theoremstyle{definition}
\newtheorem{remark}{Remark}
\newtheorem{example}{Example}
\def\om{\omega}
\def\a{\alpha}
\def\e{\varepsilon}
\def\l{\lambda}
\def\u{{\bar{u}_\l}}
\def\s{\sigma}
\def\vol{{\mathbb V}{\rm ol}}
\def\m{\mu}
\def\L{\Lambda}
\def\R{\mathbb R}
\def\bA{\mathbb A}
\def\cK{\mathcal K}
\def\T{\mathbb T}
\def\Z{\mathbb Z}
\def\cL{\mathcal L}
\def\cO{\mathcal O}
\def\cI{\mathcal I}
\def\cM{\mathcal M}
\def\fM{\mathfrak M}
\def\cN{\mathcal N}
\def\cU{\mathcal U}
\def\cZ{\mathcal Z}
\def\cSt{\widetilde{\Sigma}}
\def\cAt{\widetilde{\mathcal A}}
\def\cMt{\widetilde{\mathcal M}}
\def\cA{\mathcal A}
\def\cM{\mathcal M}
\def\g{\gamma}
\def\G{\Gamma}
\def \r {\rho}
\def\ie {{\it i.e., }}
\def \supp {{\rm supp\,}}
\title{Aubry-Mather Theory for  Conformally Symplectic Systems}
\author{Stefano Mar\`o}
\address{Dipartimento di Matematica, Universit\`a di Pisa, Italy.}
\email{maro@mail.dm.unipi.it}
\author{Alfonso Sorrentino}
\address{Dipartimento di Matematica, Universit\`a degli Studi di Roma ``Tor Vergata'', Rome, Italy.}
\email{sorrentino@mat.uniroma2.it}
\date{\today}
\begin{document}

\begin{abstract}
{In this article we develop an analogue of Aubry-Mather theory for a class of dissipative systems, namely  conformally symplectic systems, and 
prove the existence of interesting invariant sets, which, in analogy to the conservative case, will be called the  Aubry and the Mather sets. Besides describing their structure and their dynamical significance, we shall  analyze  their attracting/repelling properties, as well as their noteworthy role in driving the asymptotic dynamics of the system.}
\end{abstract}

\maketitle


\section{Introduction}

The aim of this paper is to describe the analogue of Aubry-Mather theory for a class of dissipative systems. More specifically, we shall consider   {\it conformally symplectic systems}, namely flows that do not preserve the symplectic structure, but do alter it up to a constant scaling factor (see Section \ref{sec1} for a precise definition). 
These systems appear in many interesting  contexts:  in physics, geometry, celestial mechanics ({\it e.g.}, the spin-orbit model \cite{Cellettibook}), economics (for examples, discounted systems \cite{Benso,IS}), models of transport (see \cite{DM,WL}), etc $\ldots$ In particular, they describe  physical and mechanical systems characterized by a dissipation  proportional to the momentum (or to the velocity), plus the action of a drifting term (see \eqref{eq3}).

The study of invariant Lagrangian submanifolds for these systems, and in particular  the existence  of {\it KAM tori} ({\it i.e.}, invariant Lagrangian tori on which the motion is conjugate to a rotation), have been thoroughly investigated by several authors and by means of varied techniques (see, for instance, \cite{CCD, CCD2, Massetti,Moser67}).

In this article we are mostly focused in understanding what happens  after these invariant Lagrangian submanifolds stop to exist or, more generally, what can be said about the dynamics and the invariant sets of a  dissipative system. 
Although conformally symplectic systems do not certainly cover the whole spectrum of dissipative systems, they definitely provide the  appropriate setting in which this kind of  questions  can be meaningfully addressed.

Inspired by the celebrated {\it Aubry-Mather} and {\it weak KAM} theories for conservative (Hamiltonian) systems (see \cite{Fathibook, Mather91,Sorrentinobook} and references therein), we shall 
investigate the existence of  {\it Aubry-Mather sets} and their dynamical properties ({\it e.g.}, attractivity or repulsivity).
These sets will be constructed by means of variational methods related to the so-called {\it least action principle}.  As a result of their action-minimizing properties, they  enjoy
a  rich structure  and many interesting dynamical features: in some sense, they can be considered as a sort of generalized invariant Lagrangian submanifolds, although not being in general smooth, nor  having the structure of a manifold. For a more precise statement of our results, we refer to the next subsection.

Previous results on Aubry-Mather sets in the dissipative context have been discussed  by Le Calvez \cite{LeCalvez, LeCalvez2, LeCalvezbook} and Casdagli \cite{Casdagli} in the case of twist maps of the annulus. However, the proofs of their results are based on low-dimensional  topological techniques, which makes impossible their extension to a more general setting.

Our work can be considered as a generalization of these results to conformally symplectic flows on any compact manifold.
Although we consider flows and not maps, a discrete version of our  ideas and techniques can be easily implemented, so to recover them.

Finally, let us remark that in the PDE context, related ideas have been recently exploited  to study the vanishing viscosity limit of solutions to the Hamilton-Jacobi equation (see \cite{DFIZ, IS}).\\

\subsection{Setting and statement of the main results}\label{sec1}

Let $M$ be a finite-dimensional  compact and connected smooth manifold, equipped with a smooth Riemannian metric $g$; we shall denote by $d$ the induced Riemannian distance. Let $TM$ and $T^*M$ denote, respectively, the tangent and cotangent bundles. A point of $TM$ will be denoted by $(x,v)$, where $x\in M$ and $v\in T_xM$, and a point of $T^*M$ by $(x,p)$, where $p\in T_x^*M$ is a linear form on the vector space $T_xM$.
With a slight abuse of notation,  we shall denote both canonical projections $\pi: TM \longrightarrow M$ and $\pi: T^*M \longrightarrow M$.
In the same spirit, $\|\cdot\|_x$ will refer to both the norm induced by $g$ on the fibers $T_xM$ and the dual norm on $T^*_xM$.\\
We denote by $\omega=-d\alpha$ the canonical symplectic form on $T^*M$, where $\alpha$ is the Liouville (or tautological) form. Choosing local coordinates $(x_1,\ldots, x_n, p_1,\ldots , p_n)$ on $T^*M$, one has that $\omega= dx\wedge dp := \sum_{i=1}^n dx_i\wedge dp_i$ and $\alpha=pdx := \sum_{i=1}^n p_idx_i$.\\

\noindent A smooth vector field $X$ on $T^*M$ is said to be {\it conformally symplectic} (CS) if there exists $\l \in \R\setminus\{0\}$ such that $$\cL_X \om =  \l \,\om$$ where $\cL_X$ denotes the Lie derivative in the direction of $X$. Clearly, the symplectic case corresponds to the limit case $\lambda =0$.\\
Observe that if $X$ is conformally symplectic, also $-X$ is conformally symplectic and $\cL_{-X} \om =  -\l \,\om$.
Hence, up to a time-inversion of the flow, one can always choose the sign of $\l$. In particular, the case $\l<0$ corresponds to the {\it dissipative} case.\\
Hereafter, we shall consider the case in which
\begin{equation}\label{hypconfsymp}
\cL_{X} \om =  -\l \,\om \qquad  \mbox{for some}\; \l>0,
\end{equation}
and we would be interested in proving the existence of invariant sets and their {\it attracting} properties. Analogously, one could translate these results to the  opposite case  and prove the existence of invariant sets with  {\it repelling} properties.

\begin{remark}
Conformally symplectic vector fields are related to the notion of {\it conformal symplectic structure} on a manifold. Roughly speaking, a local conformal symplectic manifold is equivalent to a symplectic manifold, but the local symplectic structure is only well-defined up to scaling by a constant. This notion was 
first introduced by Vaisman \cite{Vaisman, Vaisman2} and later studied by several authors, for example by Banyaga in \cite{Banyaga}.\\
\end{remark}

\noindent 
Let us start by studying the properties of conformally symplectic vector fields and by deriving the differential equations that govern the motion induced by $X$. Using Cartan's formula and the closeness of $\omega$, one obtains, denoting $i_X$ the inner product, or contraction, with $X$,
$$
\cL_X \om = d(i_X \omega) + i_X(d\omega)  = d(i_X \omega).
$$
Hence, the conformally symplectic condition and the exactness of $\omega=-d\a$ imply
$$d(i_X\omega - \l \alpha) =0$$
that is, the 1-form $i_X\omega - \l \alpha$ is closed. We define the {\it cohomology class}  of $X$ to be the cohomology class of this 1-form; it will be denoted by $[X] \in H^1(M;\R)$. Observe that here (as well as in the following) we are tacitly identifying the de-Rham cohomology groups $H^1(M;\R)$ and $H^1(T^*M;\R)$ by means of the isomorphisms induced by the projection map $\pi:T^*M\longrightarrow M$, which is a homotopy equivalence, and by its homotopy inverse $\iota: M\longrightarrow T^*M$ given by the inclusion of the zero section.\\

\noindent We say that $X$ is {\it exact conformally symplectic} if $[X]=0$. In this case, there exists a smooth function $H: T^*M \longrightarrow \R$ such that
\begin{equation}\label{eq1}
i_X\omega - \l \alpha = dH.
\end{equation}
We call this function an {\it Hamiltonian} associated to $X$. Vice versa, because of the non-degeneracy of $\omega$, to any function $H:T^*M\rightarrow \R$ one can associate a unique vector field $X_H$ that solves (\ref{eq1}). Observe, in fact, that in local coordinates $(x_1,\ldots, x_n,p_1,\ldots, p_n)$, relation (\ref{eq1}) becomes:
$$
-\dot{p} dx + \dot{x} dp - \l p dx = \frac{\partial H}{\partial x}(x,p) dx + \frac{\partial H}{\partial p}(x,p) dp,
$$
and therefore the vector field is given (in local coordinates) by:
\begin{equation}\label{eqmotion}
\left\{
\begin{array}{l}
\dot{x} =  \frac{\partial H}{\partial p}(x,p)\\
\dot{p} = - \frac{\partial H}{\partial x}(x,p) - \l p.
\end{array}
\right.
\end{equation}
We shall denote by $\Phi^t_{H,\l}$ the associated flow and the corresponding exact CS vector field by $X_{H,\l}$ (so to specify both the Hamiltonian and the dissipation).
Observe that when $\lambda=0$, we recover the classical Hamilton's equations.\\

We shall deal with {\it Tonelli exact conformally symplectic} (TECS) vector fields, namely, exact conformally symplectic vector fields that can be generated by a Tonelli Hamiltonian. Recall that a function $H:\,T^*M\longrightarrow \R$ is called a  {\it Tonelli (or optical) Hamiltonian} if:
\begin{itemize}
\item[i)]  $H \in C^2(T^*M)$;
\item[ii)]  $H$ is strictly convex in each fiber in the $C^2$ sense, \ie the second partial 
vertical derivative ${\partial^2 H}/{\partial p^2}(x,p)$ is positive definite,  as a quadratic form, for any $(x,p)\in T^*M$;
\item[iii)] $H$ is superlinear in each fiber, \ie
$$\lim_{\|p\|_x\rightarrow +\infty} \frac{H(x,p)}{\|p\|_x} = + \infty \qquad \mbox{uniformly in }\,x.$$
\end{itemize}
Condition  iii) is equivalent to ask that for every $A\in \R$ there exists $B=B(A)\geq 0$ such that
$$
H(x,p) \geq A\|p\|_x - B \qquad \forall\, (x,p)\in T^*M.
$$
\noindent Using the compactness of $M$, it is possible to check that the property of being Tonelli is independent of the choice of the Riemannian metric $g$.\\

To a Tonelli Hamiltonian we can associate a {\it Lagrangian}  as its {\it Fenchel transform} (or {\it Legendre-Fenchel transform}): 
\begin{eqnarray}\label{lagrangian}
 L:\; TM &\longrightarrow & \R \nonumber\\
(x,v) &\longmapsto & \sup_{p\in T^*_xM} \{\langle p,\,v \rangle_x - H(x,p)\}\, 
\end{eqnarray}
where $\langle \,\cdot,\,\cdot\, \rangle_x$
denotes the canonical pairing between the tangent and cotangent bundles.\\
Since $H$ is a Tonelli Hamiltonian, it is possible to prove that $L$ is finite everywhere (as a consequence of the superlinearity of $H$), superlinear and strictly convex in each fiber (in the $C^2$ sense). 
Moreover, $L$ is also $C^2$. We shall refer to such  a Lagrangian as a {\it Tonelli Lagrangian}.
One can also check that $H$ can be obtained as the Legendre-Fenchel transform of $L$, \ie,
$$H(x,p)= \sup_{v\in T_xM} \{\langle p,\,v \rangle_x - L(x,v)\}.$$

\noindent Observe that the supremum in (\ref{lagrangian}) is actually a maximum an it is attained at $p_{\rm max}=p_{\rm max}(x,v) \in T^*_xM$ such that $\frac{\partial H}{\partial p}(x,p_{\rm max})=v$. In particular, 
$p_{\rm max} = \frac{\partial L}{\partial v}(x,v)$. This defines a map
\begin{eqnarray} \label{legendre}
\cL_L: TM & \longrightarrow& T^*M \nonumber \\
(x,v) &\longmapsto& \left(  x, \frac{\partial L}{\partial v}(x,v)\right)
\end{eqnarray}
which is called the {\it Legendre transform} associated to $L$ (or $H$). It follows from the assumptions on $L$ and $H$, that this map is a global $C^1$ diffeomorphism, whose inverse is given by
\begin{eqnarray} \label{inverselegendre}
\cL_L^{-1}: T^*M & \longrightarrow& TM \nonumber\\
(x,p) &\longmapsto& \left(  x, \frac{\partial H}{\partial p}(x,p)\right).
\end{eqnarray}
{Using the (inverse) Legendre transform $\cL_L^{-1}$  we can transport the flow $\Phi^t_{H,\l} = (x(t), p(t))$ to the tangent bundle $TM$ and define the corresponding Lagrangian flow
 $$
 \Phi^t_{L,\l}=(x(t), v(t)) = \cL_L^{-1}(x(t), p(t)) = \left( x(t), \frac{\partial H}{\partial p}(x(t), p(t))  \right),
 $$
where, using the equation of motion (\ref{eqmotion}), $v(t)=\dot{x}(t)$; see also Section \ref{sec:discount}.\\
\noindent Using the definition of $L$ in (\ref{lagrangian}), one can deduce the following important inequality, known as {\it Legendre-Fenchel inequality}, which will play an important role in our discussion:
\begin{equation} \label{LegFenineq}
L(x,v) + H(x,p)\geq \langle p,\,v \rangle_x 
\end{equation}
for each $x \in M$,  $v\in T_xM$ and $ p\in T^*_xM$. In particular, this inequality becomes an equality if and only if  $(x,p)=\cL_L(x,v)$.\\

\begin{remark}[{\bf Non-exact case}] \label{remnonexact}
Suppose that $X$ is conformally symplectic and it has cohomology class
$[X]=c\in H^1(M;\R)$. Let $\eta_c$ be any smooth closed $1$-form on
$M$ with cohomology class $c$ and see it as a $1$-form on $T^*M$; in
local coordinates it will be represented by $\eta_c = \eta_c (x) \, dx
:= \sum_{i=1}^n \eta_{c,i}(x)dx_i$. Hence, we have
$$i_X\omega - \l \alpha =\eta_c + dH,$$
which in local coordinates will be:
$$
-\dot{p} dx + \dot{x} dp - (\l p  +\eta_c(x)) dx = \frac{\partial H}{\partial x}(x,p) dx + \frac{\partial H}{\partial p}(x,p) dp.
$$
Therefore, the vector field is given (in local coordinates) by:
\begin{equation}\label{eq3}
\left\{
\begin{array}{l}
\dot{x} =  \frac{\partial H}{\partial p}(x,p)\\
\dot{p} = - \frac{\partial H}{\partial x}(x,p) - \l p -\eta_c(x) = - \frac{\partial H}{\partial x}(x,p) - \l (p + \frac{\eta_c(x)}{\lambda}).
\end{array}
\right.
\end{equation}
In order to keep track of all information, we should denote $X$ by $X_{H,\l,c}$ (observe that knowing $H$, $\l$ and $c$, the representative $\eta_c$ is identified univocally); the exact case $X_{H,\l}$ would then correspond to $X_{H,\l,0}$.

\noindent Consider now the change of coordinates $(x, P)=(x, p + \frac{\eta_c(x)}{\lambda}))$, which is symplectic (but not necessarily exact) due to the closedness of $\eta_c$. If we denote 
$\widehat{H}(x,P):= H(x, P-\frac{\eta_c(x)}{\l})$, 
we can see that the equations of motion in these new coordinates become:

\begin{equation*}
\left\{
\begin{array}{l}
\dot{x} =  \frac{\partial \widehat{H}}{\partial P}(x,P)\\
\dot{P} = - \frac{\partial \widehat{H}}{\partial x}(x,P) - \l P.
\end{array}
\right.\\
\end{equation*}

\vspace{10 pt}

\noindent Hence, the non-exact case can be transformed into an exact one, modulo a suitable symplectic change of coordinates (which is of course non-exact). We shall refer to $\widehat{H}$ as the {\it Hamiltonian} of $X$.
Observe that 
\begin{equation}\label{modifiedHam}
H(x,p)= \widehat{H}(x,\frac{\eta_x}{\l}+p).
\end{equation}
This is analogous to Mather's idea, in the conservative case, of changing the Lagrangian (and consequently the Hamiltonian) by subtracting closed $1$-forms.\\
\noindent Finally, let us compute the Lagrangian corresponding to the modified Hamiltonian 
$$H_\theta(x,p)=H(x,p+\theta(x)),$$
where $\theta$ denotes a closed $1$-form on $M$ . It is easy to check, using (\ref{lagrangian}), that: 
 \begin{eqnarray*}
 L_\theta(x,v) &=& \sup_{p\in T^*_xM} \{\langle p,\,v \rangle_x - H_\theta(x,p)\} \\
 &=&   \sup_{p\in T^*_xM} \{\langle p,\,v \rangle_x - H(x,p+\theta(x))\} \\
  &=&   \sup_{p\in T^*_xM} \{\langle p+\theta(x),\,v \rangle_x - H(x,p+\theta(x))\} - \langle \theta(x), v \rangle \\
    &=&   L(x,v) - \langle \theta(x), v \rangle.
 \end{eqnarray*}
 Hence, the Lagrangian changes by a linear term given by the action of the $1$-form $\theta$ on tangent vectors $v$.\\
 \end{remark}

\vspace{10 pt}


In order to state the main results, let us clarify some notions.

\noindent We say that a compact set ${\mathcal S}$ is a global {\it attracting set} for  $\Phi_{H,\l}$, if for each open neighborhood ${\mathcal U} \supset {\mathcal S}$ and for each $(x,p)\in T^*M$, there exists $t_0=t_0(x,p,{\mathcal U})$ 
such that $\Phi_{H,\l}^t(x,p) \in \mathcal U$ for all $t\geq t_0$.
In other words, each orbit, after some time, will get arbitrarily close to $S$ (and remain close thereafter); in particular, this means that 
$S$ contains the $\omega$-limit set\footnote{
We recall that the {\it $\omega$-limit set} of the orbit starting at $(x,p)$, that we denote  by $\Omega_{\infty}(x,p)$,  is defined as the set of points $(\bar x,\bar p)\in T^*M$ for which there exists a sequence $(t_k)$, $t_k\to +\infty$ as $k\to +\infty$ such that 
  \[
  \lim_{k\to +\infty}\Phi_{H,\l}^{t_k}(x,p)=(\bar x,\bar p)
  \]
} of every orbit.
Note that an attracting set is clearly forward-invariant, but it might not be backward-invariant. Hence,
 we say that a compact set ${\mathcal K}$ is a global {\it attractor}, if it is a global attracting set and it is also invariant. 
A global attractor $\mathcal K$ will be said {\it maximal} if it is not properly contained in any other attractor. 
As a part of our analysis, we shall show the existence of a  maximal global attractor for conformally symplectic systems and describe its structure and properties.  

\noindent For more insight on the concept of attractor (and on several other definitions that appear in the literature), see for example, \cite{Conley, milnor1, milnor2}. \\
In the statement of the Main Theorem  we refer to $C^1$ {\it exact Lagrangian graphs}.
The precise definition of Lagrangian submanifolds and their main properties will be discussed in subsection \ref{subsecLaggraph}; here we point out that  a  $C^1$ exact Lagrangian graph can be simply described as  a graph in $T^*M$ of the form $\{(x,du):\; x\in M\}$ where $u: M \longrightarrow \R$ is a $C^2$ function.
  \\
  \medskip
  
  Let us state our Main Theorem. We state it for Tonelli exact conformally symplectic vector fields, but -- as we have pointed out in Remark \ref{remnonexact} -- the results can be easily rephrased for the non-exact case.\\

  \begin{main} 
 Let $M$ be a finite-dimensional compact connected smooth Riemannian manifold without boundary.
 Let $H: T^*M \longrightarrow \R$ be a Tonelli Hamiltonian and $L: TM \longrightarrow \R$ the associated Tonelli Lagrangian. For each $\l>0$, let us consider the exact conformally symplectic vector field $X_{H,\l}$. Then:
      \begin{itemize}
      \item[(i)] There exists the {\em maximal global attractor} $\mathcal K_{H,\l}$ for $X_{H,\l}$.
      \item[(ii)] There exists a non-empty compact invariant set $\cA^*_{H,\l}$, called {\em the Aubry set} for $X_{H,\l}$, with the following properties:
      \begin{itemize}
      \item[a)] The canonical projection $\pi: T^*M \longrightarrow M$ restricted to $\cA^*_{H,\l}$ is a bi-Lipschitz homeomorphisms (Mather's graph property).
      \item[b)] $\cA^*_{H,\l}$ is supported on the graph of the unique (Lipschitz) solution $\u$ of the $\l$-discounted Hamilton-Jacobi equation $\l \u + H(x,d\u)=0$.
      \item[c)] If there exists an invariant $C^1$ exact Lagrangian graph $\Lambda$, then $\cA^*_{H,\l} = \Lambda$.
      \item[d)] The following inclusions hold:
        $$\cA^*_{H,\l} \subseteq \mathcal{K}_{H,\l} \subseteq \{(x,p): \; \l \u(x) + H(x,p) \leq 0\}.$$ 
        In particular, $\cA^*_{H,\l}$ is the maximal compact invariant set contained in $\{(x,p): \; \l \u(x) + H(x,p) = 0\}$.  
      \item[e)] All orbits in $\cA^*_{H,\l}$ are global minimizers for the $\l$-discounted Lagrange action. More specifically, for any $(x,p) \in \cA^*_{H,\l}$ let us denote
        $\g_{(x,p)}(t) := \pi(\Phi_{H,\l}^t(x,p))$. Then, for every continuous piecewise $C^1$ curve $\sigma: [a,b] \longrightarrow M$ such that
        $\s(a)=\g_{(x,p)}(a)$ and $\s(b)=\g_{(x,p)}(b)$, we have:
        $$
        \int_a^b e^{\l t} L(\g_{(x,p)}(t), \dot{\g}_{(x,p)}(t))\,dt \leq \int_a^b e^{\l t} L(\s(t), \dot{\s}(t))\,dt.
        $$
      \end{itemize}
    \item[(iii)]  Let $\fM_{L,\l}$ denote the set of invariant (Borel) probability measures for $\Phi_{L,\l}$.  We say that $\mu \in \fM_{L,\l}$ is {\it action-minimizing} if
      $$
      \int_{TM} (L- \l \u)\, d\mu = \min_{\nu \in \fM_{L,\l}} \int_{TM} (L- \l \u)\, d\nu.
      $$
      Let $\cL_L$ denote the Legendre transform associated to $L$ (see \eqref{legendre}) and let us define the set
      $$
      \cM^*_{H,\l} :=  \cL_L \left(     \overline{\bigcup \{ {\rm supp}\,\m: \; \mu\; \mbox{is action-minimizing} \}}
      \right).$$
      This set, 
      which is called the {\em Mather set} of $X_{H,\l}$, satisfies the following properties:
      \begin{itemize}
      \item[a)] It  is non-empty, compact, invariant and recurrent.
      \item[b)] The restriction  of $\pi$ to $\cM^*_{H,\l}$ is a bi-Lipschitz homeomorphisms (Mather's graph property).
      \item[c)] The following inclusion holds:
        $$
        \cM^*_{H,\l} \subseteq \cA^*_{H,\l}.
        $$
        More specifically:
        $$
        \mu \; \mbox{is action-minimizing}
        \qquad  \Longleftrightarrow \qquad 
        \supp \mu \subseteq \cL_L^{-1}\left( \cA^*_{H,\l} \right).
        $$
        \item[d)] Let $\cM_H^*$ be the Mather set for the corresponding conservative flow $X_{H,0}$. Then, for every neighborhood $\cU \supset \cM^*_H$, the sets $\cM^*_{H,\l}$ are definitely contained in $\cU$ as $\l \rightarrow 0^+.$
      \end{itemize}
      \end{itemize}
  \end{main}

\begin{remark}\label{remIntro}
{\it i}) What we have denoted in the main theorem $\cA_{H,\l}^*$ and $\cM_{H,\l}^*$ correspond to the (inverse) Legendre transform of the sets 
$\cAt_{L,\l}$ and $\cMt_{L,\l}$ that we are going to define in \eqref{defA} and \eqref{defMather}, by means of variational methods. \\
{\it ii}) The inclusions in properties (ii,d) and (iii,c) can be strict; see for instance Example \ref{expendulum} and \ref{exmane} in Section \ref{sec:examples}.\\
{\it iii}) Observe that although the Aubry and the Mather sets are contained in the maximal attractor $\cK_{H,\l}$, 
they might not be attractors themselves (see Example \ref{expendulum} in Section \ref{sec:examples}).\\
{\it iv}) A convergence result similar to property (iii,d) does not hold in general for the Aubry set; see Remark \ref{remconv} for more details.
\end{remark}

\subsection*{ Organization of the article} 
The proofs of the results stated in the Main Theorem are spread throughout the article. In order to help the reader identify them, we list here below more precise references:
\begin{itemize}
\item $\cK_{H,\l}$ is defined in Section \ref{sec:attract}, more precisely in \eqref{defK}; the proof that it is a maximal global attractor is in Proposition \ref{Kattractor}.
\item $\cA^*_{H,\l}$ is defined in Section \ref{sec:aubry}, more precisely in \eqref{defA}. Its properties (a,b,e) are discussed in items a.1-4) after its definition; property (c) follows from 
Proposition \ref{AubryLaggraph}, while property (d) from Proposition \ref{Kattractor}.
\item $\cM^*_{H,\l}$ is defined in Section \ref{sec:mather}, more precisely in \eqref{defMather}. Its properties (a,b,c) are discussed in items m.1-3) after its definition; property (d) is proved in  Corollary \ref{corolconvMat} and Proposition \ref{convMathmeas}.
\end{itemize}
As far as the rest of the paper is concerned, in Section \ref{sec:discount} we introduce the $\l$-discounted Action Functional and the $\l$-discounted Hamilton-Jacobi equations proving some of their properties. Section \ref{sec:weak} is dedicated to  describing the extension of Weak KAM theory to the conformally symplectic case.  The Aubry set is introduced and studied in Section \ref{sec:aubry}. In Section \ref{sec:attract} we investigate asymptotic properties of the flow and use them to construct the global maximal attractor and to study its properties. Action-minimizing probability measures  studied in Section \ref{sec:mather} and used to define the Mather set.
In Section \ref{sec:conv} we address the question of what happens to these objects in the limit from the dissipative to the conservative case.
Finally,  we conclude by describing some illustrative examples in Section \ref{sec:examples}.

\subsection*{Acknowledgments} 
We would like to express our gratitude to Alessandra Celletti for her interest in this work and for many fruitful discussions.
SM acknowledges the support of Marie Curie Initial Training Network {\it Stardust}, FP7-PEOPLE-2012-ITN, Grant Agreement 317185.
AS has been partially supported by the PRIN-2012-74FYK7 grant ``{\it Variational and perturbative aspects of nonlinear differential problem}''.
The authors are grateful to the anonymous referees for their valuable comments and suggestions.

\vspace{20 pt}

 \section{Discounted action and Discounted Hamilton-Jacobi Equations.}\label{sec:discount}
In this section we are going to describe the analog in the conformally symplectic case of two well-known facts in the conservative (Hamiltonian) framework. Namely the correspondence between the Hamiltonian and the Lagrangian flux and the characterization of Lagrangian invariant manifold through solutions of the Hamilton-Jacobi equation. \\

Hereafter we shall consider $X=X_{H,\l}$ to be an exact  CS vector field {as in (\ref{eq1})}, where $\l>0$, $H:T^*M\longrightarrow \R$ is a Tonelli Hamiltonian and $L: TM\longrightarrow \R$ the associated Tonelli Lagrangian.\\

\subsection{Discounted Euler-Lagrange equations and Action}

\noindent Let us start proving that the orbits of the Lagrangian flow $\Phi_{L,\l}$ correspond to solutions of the following Euler-Lagrange equation:

\begin{equation}\label{ELeq}
\left\{\begin{array}{l}
\dot{x} = v\\
 \frac{d}{dt} \left( e^{\l t} \frac{\partial L}{\partial v} \right) = e^{\l t} \frac{\partial L}{ \partial x}.
\end{array}\right.
\end{equation}

\bigskip

\noindent More precisely, the following holds.\\

\begin{proposition}\label{propELHam}
If $(x(t), p(t))$ is a solution of (\ref{eqmotion}), then $(x(t), v(t)) = \cL_L^{-1}(x(t), p(t))$ is a solution of (\ref{ELeq}). Conversely, if $(x(t), v(t))$ is a solution of (\ref{ELeq}), then $(x(t), p(t)) = \cL_L(x(t), v(t))$ is a solution of (\ref{eqmotion}).
\end{proposition}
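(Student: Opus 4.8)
The plan is to verify the equivalence directly in local coordinates, since both statements are local in nature and the Legendre transform $\cL_L$ together with its inverse $\cL_L^{-1}$ are given by explicit formulas. Recall that $(x,p) = \cL_L(x,v)$ means $p = \frac{\partial L}{\partial v}(x,v)$ and that by the Legendre-Fenchel duality one has $v = \frac{\partial H}{\partial p}(x,p)$; moreover differentiating the identity $H(x, \frac{\partial L}{\partial v}(x,v)) = \langle \frac{\partial L}{\partial v}(x,v), v\rangle_x - L(x,v)$ with respect to $x$ (at fixed $v$) yields the classical relation $\frac{\partial H}{\partial x}(x,p) = -\frac{\partial L}{\partial x}(x,v)$ whenever $(x,p) = \cL_L(x,v)$. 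These are exactly the algebraic facts I would isolate as a preliminary step.

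Then I would carry out the computation in one direction. Suppose $(x(t),p(t))$ solves \eqref{eqmotion} and set $(x(t),v(t)) = \cL_L^{-1}(x(t),p(t))$, so that $v(t) = \frac{\partial H}{\partial p}(x(t),p(t))$ and $p(t) = \frac{\partial L}{\partial v}(x(t),v(t))$. The first equation of \eqref{eqmotion} gives immediately $\dot{x} = \frac{\partial H}{\partial p}(x,p) = v$, which is the first line of \eqref{ELeq}. For the second line, I compute
$$
\frac{d}{dt}\left(e^{\l t}\frac{\partial L}{\partial v}(x(t),v(t))\right) = e^{\l t}\left(\l\, p(t) + \dot p(t)\right),
$$
using $\frac{\partial L}{\partial v}(x(t),v(t)) = p(t)$. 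By the second equation of \eqref{eqmotion}, $\dot p = -\frac{\partial H}{\partial x}(x,p) - \l p$, so the right-hand side collapses to $e^{\l t}\left(-\frac{\partial H}{\partial x}(x,p)\right) = e^{\l t}\frac{\partial L}{\partial x}(x,v)$ by the duality relation recalled above. This is precisely the second line of \eqref{ELeq}.

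For the converse, I would run essentially the same chain of equalities in reverse: starting from a solution $(x(t),v(t))$ of \eqref{ELeq}, set $p(t) = \frac{\partial L}{\partial v}(x(t),v(t))$, so $(x(t),p(t)) = \cL_L(x(t),v(t))$ and by Legendre duality $v(t) = \frac{\partial H}{\partial p}(x(t),p(t))$; the equation $\dot x = v$ then reads $\dot x = \frac{\partial H}{\partial p}(x,p)$, recovering the first line of \eqref{eqmotion}. Expanding $\frac{d}{dt}(e^{\l t} p(t)) = e^{\l t}(\l p + \dot p)$ and equating with $e^{\l t}\frac{\partial L}{\partial x}(x,v) = -e^{\l t}\frac{\partial H}{\partial x}(x,p)$ yields $\dot p = -\frac{\partial H}{\partial x}(x,p) - \l p$, which is the second line of \eqref{eqmotion}. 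One should note that the regularity hypotheses (Tonelli, so $\cL_L$ is a $C^1$ diffeomorphism and $L \in C^2$) guarantee that all the curves and derivatives involved are well-defined and that the chain rule applies; the expression $\frac{d}{dt}\frac{\partial L}{\partial v}(x(t),v(t))$ makes sense along a solution because $v(t) = \dot x(t)$ is differentiable.

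There is no serious obstacle here: the only mild subtlety worth spelling out is the derivation of the identity $\frac{\partial H}{\partial x} = -\frac{\partial L}{\partial x}$ along the Legendre graph, which follows from differentiating the Fenchel relation and using that the $p$-derivative term drops out precisely because $p$ is the maximizer; and the fact that these computations are coordinate-independent, which is clear since \eqref{ELeq} and \eqref{eqmotion} are the coordinate expressions of intrinsic objects (the pullback of $X_{H,\l}$ under $\cL_L$). I would state this last point briefly rather than belabor it.
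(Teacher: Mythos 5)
Your proposal is correct and follows essentially the same route as the paper: both work in a local chart, invoke the Legendre duality identities $v=\frac{\partial H}{\partial p}(x,p)$ and $\frac{\partial L}{\partial x}(x,v)=-\frac{\partial H}{\partial x}(x,p)$ along the graph of $\cL_L$, and then observe that the second equation of \eqref{eqmotion} is equivalent to $\frac{d}{dt}\bigl(e^{\l t}\frac{\partial L}{\partial v}\bigr)=e^{\l t}\frac{\partial L}{\partial x}$ after multiplying by the integrating factor $e^{\l t}$. The only cosmetic difference is that you derive the identity $\frac{\partial H}{\partial x}=-\frac{\partial L}{\partial x}$ by differentiating the Fenchel relation, whereas the paper cites it from the literature.
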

\smallskip

\begin{proof}
Let us work in a coordinate chart. Using the definitions of $\cL_L$ and $\cL_L^{-1}$, one can check that (see, for instance, \cite[Proposition 2.6.3]{Fathibook})
$$\dot{x}(t)=\frac{\partial H}{\partial p}(x(t),p(t))=v(t),$$ 
which provides the first equation in \eqref{ELeq}, and 
$$ \frac{\partial L}{ \partial x} (x(t),v(t)) = - \frac{\partial H}{ \partial x} \left(x(t), \frac{\partial L}{\partial v}(x(t),v(t))\right) =  - \frac{\partial H}{ \partial x} (x(t), p(t)).$$
Therefore:
{\small
\begin{eqnarray*}
\dot{p}(t)= -\frac{\partial H}{\partial x}(x(t),p(t)) - \l p(t) 
&\Longleftrightarrow&  \frac{d}{dt} \left( \frac{\partial L}{\partial v}(x(t), v(t))\right)  = \frac{\partial L}{\partial x}(x(t), v(t)) - \l \frac{\partial L}{\partial v}(x(t), v(t)) \\
&\Longleftrightarrow&
\frac{d}{dt} \left( e^{\l t} \frac{\partial L}{\partial v}(x(t), v(t))\right)  = e^{\l t}\frac{\partial L}{\partial x}(x(t), v(t)),
\end{eqnarray*}}
which proves that the second equation in \eqref{ELeq} is also solved.
The converse statement can be proved similarly.
\end{proof}

\vspace{20 pt}

\noindent Solutions of (\ref{ELeq}) have a variational characterization. Let  $\g:[a,b]\longrightarrow M$ be a continuous piecewise $C^1$ curve with $-\infty < a <b < +\infty$. We define its {\it discounted action} to be 
$$
 \bA_{L,\l} (\g) = \int_a^b e^{\l t} L(\g(t), \dot{\g}(t))\,dt.
$$
It is a classical result in the calculus of variations, that solutions to (\ref{ELeq}) are in 1-1 correspondence with  $C^2$ extremal curves of the  {discounted action functional}, for the fixed-end problem.  {More precisely we say that a $C^2$ curve $\g:[a,b]\longrightarrow M$ is an {\it extremal} of $\bA_{L,\l}$ 
for the fixed endpoint problem (recall that we are assuming $L$ to be at least $C^2$) if
for every $C^2$ variation
$$
\G: [a,b]\times [-\e,\e] \longrightarrow M
$$
({\it i.e.}, $\G(t,0)=\g(t)$ for all $t\in [a,b]$ and  $\G(t,s)=\g(t)$ in a neighborhood of $(a,0)$ and $(b,0)$), we have
\begin{equation}\label{firstvariation}
\frac{d}{ds} \left(\int e^{\l t} L(\G(t,s),\partial_t \G(t,s)) \,dt\right)_{| s=0} = 0.
\end{equation}
Observe  that if $\g: [a,b]\longrightarrow M$ is a $C^2$ extremal of $\bA_{L,\l}$, then for every $[a',b']\subset [a,b]$ the restriction $\g|[a',b']$ is still a $C^2$ extremal; hence, reducing to a coordinate chart, one can show that if \eqref{firstvariation} is satisfied for all $C^2$ variations $\G$, then  $\g$  satisfies  (in local coordinates) \eqref{ELeq}.
For a more detailed discussion of these resuts, we refer the reader,  for example, to \cite[Chapter 3 \S 12]{Arnold} or \cite[Section 2.1]{Fathibook}. }
\\

\begin{remark} \label{defactionmin}
In the following we shall be interested in {\it (discounted) action-minimizing curves}. We say that a continuous piecewise $C^1$ curve 
$\g:[a,b]\longrightarrow M$ minimizes the discounted action if
$$
 \bA_{L,\l} (\g) \leq  \bA_{L,\l} (\sigma),
$$
for every continuous piecewise $C^1$ curve $\sigma:[a,b]\longrightarrow M$ such that $\sigma(a)=\gamma(a)$ and $\sigma(b)=\gamma(b)$. 
{Proceeding, for example, as in \cite[Proposition 2.3.7 and Corollary 2.2.12]{Fathibook}), it is possible to show   that $\g$ is a $C^2$ extremal of $\bA_{L,\l}$ and satisfies \eqref{ELeq}.}

Analogously, a continuous piecewise $C^1$  curve $\g: I \longrightarrow M$, where $I$ is an unbounded interval, is said to minimize the discounted action, if 
for every compact subinterval $[a,b]\subset I$, $\g|[a,b]$ is action-minimizing.
\end{remark}

\medskip

\subsection{Invariant Lagrangian graphs and Discounted Hamilton-Jacobi equation}\label{subsecLaggraph}
 Let us consider $\L$ to be a $C^1$ Lagrangian submanifold of dimension $n$ in $T^*M$, namely, if we denote by $i_\L: \L \longrightarrow T^*M$ the inclusion, we have that
 $i_\L^*\omega \equiv 0$, \ie the symplectic form vanishes when restricted to the tangent bundle of $\L$. \\
We are interested in $C^1$ Lagrangian graphs over the zero-section of $T^*M$. Recall that smooth Lagrangian graphs  correspond to the  graph of closed $1$-forms on $M$; we shall refer to the cohomology class of $\L\in H^1(M;\R)$ as  the cohomology class of the corresponding $1$-form (one can give a more intrinsic definitions, which extends to more general Lagrangian submanifolds). 

We suppose that $\L$ is invariant under $\Phi_{H,\l}$ and exact in the sense that its cohomology class is 0;
moreover, let $u:M\longrightarrow \R$ be a $C^2$ function such that $ \L = {\rm Graph}(du)$. 
The invariance property translates into the fact that the vector field $X_{H,\l}$ is tangent to $\Lambda$ at any point.

In the conservative case the invariance of a Lagrangian graph can be characterized in terms of being solution of a PDE, known as Hamilton-Jacobi equation. We would like to 
discuss the analogue of this characterization in the dissipative (CS) case.

Let us consider the function ${F}(x,p)= \l u(x) + H(x, p)$. Observe that $ dF(x,p) = \l du(x) + dH(x,p)$. Let $W=(W_x, W_p) \in T_{(x,p)} \L$ be any tangent vector to $\L$ at a point $(x,p)=(x,du(x))$. 
Using the definition of $H$ in (\ref{eq1}), the definition of $\alpha$, the fact that $X_H$ is tangent to $\Lambda$ and the fact that $\Lambda$ is Lagrangian (so it vanishes when applied to tangent vectors to $\Lambda$) we obtain:
\begin{eqnarray*}
\langle dF(x,p), W \rangle &=&
 \l \langle du(x), W_x \rangle + \langle dH(x,p), W \rangle \\
&=&  \l \langle du(x), W_x \rangle +  i_{X_{H,\l}(x,p)} \omega (W) - \l \langle \alpha(x,p), W\rangle\\
&=& \l \langle du(x), W_x \rangle - \l \langle du(x), W_x\rangle =0.
\end{eqnarray*}
It follows that $F$ is constant on $\Lambda$, which gives the {\it $\l$-discounted Hamilton-Jacobi Equation} (or simply, when there is no risk of ambiguity, {\it discounted Hamilton-Jacobi Equation})
\begin{equation}\label{eqHJ}
\l u(x) + H(x, du(x)) = c \qquad \forall\; x\in M
\end{equation}
for some $c\in \R$.\\

\begin{remark}\label{remarkconstant}
{\it i}) Observe that if $u$ is a solution to the equation (\ref{eqHJ}), then $v= u+k$ satisfies $\l v(x) + H(x, dv(x)) = c+\l k$. 
Therefore, the constant does not play -- at least in this context -- any important role. Without any loss of generality, it can be assumed to be equal to 0. \\
{\it ii}) Once the constant on the right-hand side is fixed, equation (\ref{eqHJ}) admits at most one smooth solution. This follows from the comparison principle in \cite[Th\'eor\`eme 2.4]{Barles}). 
Actually, under our assumptions on $H$, this equation admits exactly one viscosity solution, as proved in \cite[Theorem 2.5]{DFIZ}. \\
{\it iii}) The exactness condition on the Lagrangian graph $\Lambda$ is essential in order to define the function $F$ (we need a primitive of the representing $1$-form).
\end{remark}
  
\noindent Conversely, if $u\in C^2(M)$ is a solution to the equation (\ref{eqHJ}), then $\L={\rm Graph}(du)$ is an invariant exact Lagrangian submanifold.  Clearly it is Lagrangian and exact, being the graph of an exact $1$-form.
Moreover, if we consider the restriction  $F_\L:= i_\L^*F=F\circ i_\Lambda \equiv 0$ and use (\ref{eq1}), we obtain
\begin{eqnarray*}
0 &=& dF_\L(x) = i_{\L}^*dF = \l i_{\L}^*du + i_{\L}^*dH \\
&=& \l du + i_{\L}^*( i_{X_{H,\l}}\omega - \l\a ) = \l du + i_{\L}^*( i_{X_{H,\l}}\omega) - \l du\\
&=& i_{\L}^*( i_{X_{H,\l}}\omega).
\end{eqnarray*}
It follows from this and the fact that the tangent spaces to a Lagrangian submanifold are maximally isotropic (recall that a vector subspace is  isotropic  if the symplectic form is identically zero when restricted on it), that $X_{H,\l}$ must belong to $T\Lambda$ and therefore, $\L$ is invariant under the flow.\\ 
Summarizing, we have proved
 
 \begin{proposition}\label{invariantlagrangiangraph}
 Let $u:M\longrightarrow \R$ be a $C^2$ function. The exact Lagrangian graph $\L=\{(x,du(x):\; x\in M\}$ is invariant  under $\Phi_{H,\l}$ if and only if 
$ \l u(x) + H(x, du(x)) \equiv c$ for some $c\in \R$.\\
 \end{proposition}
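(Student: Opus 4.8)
The statement is an equivalence, so I would prove the two implications separately; both come out of analyzing the single function $F(x,p) := \lambda u(x) + H(x,p)$ restricted to the graph $\Lambda = \mathrm{Graph}(du)$.

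For the forward implication, assume $\Lambda$ is invariant under $\Phi_{H,\lambda}$. The plan is to show that $dF$ annihilates every tangent vector $W = (W_x, W_p) \in T_{(x,p)}\Lambda$, with $p = du(x)$. First I would rewrite $dH$ via \eqref{eq1}, namely $dH = i_{X_{H,\lambda}}\omega - \lambda\alpha$, so that $\langle dF, W\rangle = \lambda\langle du(x), W_x\rangle + i_{X_{H,\lambda}}\omega(W) - \lambda\langle\alpha(x,p), W\rangle$. Then three facts kill the right-hand side: invariance means $X_{H,\lambda}$ is tangent to $\Lambda$, so $i_{X_{H,\lambda}}\omega(W) = 0$ since $\Lambda$ is Lagrangian; and the tautological property of the Liouville form gives $i_\Lambda^*\alpha = du$, so $\langle\alpha(x,p), W\rangle = \langle du(x), W_x\rangle$. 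Hence $\langle dF, W\rangle = \lambda\langle du(x), W_x\rangle - \lambda\langle du(x), W_x\rangle = 0$. Since $M$ (hence $\Lambda$) is connected, $F$ is constant on $\Lambda$, which is exactly \eqref{eqHJ}.

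For the converse, suppose $\lambda u(x) + H(x, du(x)) \equiv c$. Then $F_\Lambda := i_\Lambda^* F \equiv c$ is constant, so $0 = dF_\Lambda = \lambda\, i_\Lambda^* du + i_\Lambda^* dH$. Substituting \eqref{eq1} once more, $i_\Lambda^* dH = i_\Lambda^*(i_{X_{H,\lambda}}\omega) - \lambda\, i_\Lambda^*\alpha = i_\Lambda^*(i_{X_{H,\lambda}}\omega) - \lambda\, du$, whence $i_\Lambda^*(i_{X_{H,\lambda}}\omega) = 0$; that is, $\omega(X_{H,\lambda}, W) = 0$ for all $W \in T\Lambda$. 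Because the tangent spaces to a Lagrangian submanifold are maximally isotropic (a Lagrangian subspace equals its own $\omega$-orthogonal complement), this forces $X_{H,\lambda}(x, du(x)) \in T_{(x,du(x))}\Lambda$ at every point, i.e. $\Lambda$ is invariant under the flow.

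The computations are short and essentially formal; there is no serious obstacle. The only points needing care are the identity $i_\Lambda^*\alpha = du$ on an exact Lagrangian graph (the defining property of the tautological form) and the maximally-isotropic characterization of Lagrangian tangent spaces, both standard. I would also note that the exactness hypothesis on $\Lambda$ — the existence of the global potential $u$ — is precisely what makes $F$ well defined on all of $T^*M$, and this is the only place where that hypothesis enters.
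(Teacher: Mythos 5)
Your proposal is correct and follows essentially the same argument as the paper: in both directions one studies $F(x,p)=\lambda u(x)+H(x,p)$ on $\Lambda$, uses \eqref{eq1} to replace $dH$, the tautological identity $i_\Lambda^*\alpha=du$, and (for the converse) the maximal isotropy of Lagrangian tangent spaces. No differences worth noting.
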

 
 \vspace{5 pt}
 

\section{Action-minimizing orbits and weak KAM theory for Conformally symplectic systems}\label{sec:weak}
As we have seen in the previous section, the existence of exact Lagrangian graphs is related to the existence of   solution (in the classical sense) to the discounted Hamilton-Jacobi equation (\ref{eqHJ}). However, typically -- think, for example, of systems which are not ``close'' to an integrable one -- these solutions (and hence invariant Lagrangian graphs) are very unlikely to exist.\\
Inspired by what happens in the conservative case, we would like to investigate what can be said about the dynamics of a general conformally symplectic Tonelli system. More specifically, whether it is possible to identify invariant sets that --  in some sense to be better specified -- can be considered as generalization of invariant Lagrangian graphs. In the conservative case these sets are what are generally called {\it Aubry-Mather} sets; see for example, just to mention a few references, \cite{Fathibook, Mather91, Sorrentinobook}.

\noindent In analogy to the classical Aubry-Mather and weak KAM theories, the key idea in what follows is to study  action-minimizing properties of the  system (either with reference to orbits or to invariant probability measures) and to use these to introduce a suitable notion of  weak solution to the discounted Hamilton-Jacobi equation. See also \cite{Fathibook, IS, DFIZ}.\\

Let us start by defining a continuous analogue of subsolutions of the discounted Hamilton-Jacobi equation (\ref{eqHJ}).\\
We say that a function $u \in C(M)$ is {\it $\l$-dominated} by $L$ (and denote it by $u\prec_\l L$) if for every $a<b$ and every continuous piecewise $C^1$ curve $\g: [a, b] \longrightarrow M$ one has
\begin{equation}\label{dominated}
e^{\l b} u(\g(b)) - e^{\l a} u(\g(a)) \leq \int_a^b e^{\l t} L(\g(t), \dot{\g}(t))\,dt.
\end{equation}

\noindent Observe that this definition is independent on additive constants, in the sense that if $u\prec_\l L$, then $u+\frac{k}{\l} \prec_\l L+k$ for any $k\in \R$.\\
In the $C^1$ case, $\l$-dominated functions are subsolutions of the discounted Hamilton-Jacobi equation and vice versa. Actually, suppose that $u\in C^1(M)$  satisfies $\l u + H(x, du(x)) \leq 0$ for all $x\in M$ and let $\g:[a,b]\longrightarrow M$ be a continuous piecewise $C^1$ curve. Using the Legendre-Fenchel inequality:
\begin{eqnarray*}
&& e^{\l b} u(\g(b)) - e^{\l a} u(\g(a)) \ = \ \int_a^b \frac{d}{dt}\left(e^{\l t} u(\g(t))\right)\, dt \\
&& \qquad \quad=\  \int_a^b e^{\l t} \left( \l u(\g(t))+ \langle du(\g(t)), \dot{\g}(t)\rangle \right)\, dt \\
&& \qquad \quad \leq\ \int_a^b e^{\l t} \left( \l u(\g(t))+ H(\g(t), du(\g(t))) + L(\g(t), \dot{\g}(t))
 \right) dt \\
&& \qquad \quad \leq\ \int_a^b e^{\l t} L(\g(t), \dot{\g}(t))\,dt,
\end{eqnarray*}
so that $u\prec_\l L$. Vice versa, if $u\prec_\l L$ and $u\in C^1(M)$, then  $\l u + H(x, du(x)) \leq 0$ for all $x\in M$. \\
More generally, if $u$ is only continuous, we have (see also \cite[Proposition 4.2.2]{Fathibook}):
\begin{proposition}\label{propsubsol}
Let $u\prec_\l L$ and assume that $du(x_0)$ exists for some $x_0\in M$. Then
$$\l u (x_0) + H(x_0, du(x_0)) \leq 0.$$
\end{proposition}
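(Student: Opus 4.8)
The plan is to reduce the statement to an infinitesimal version of the domination inequality \eqref{dominated}, evaluated along curves issuing from $x_0$ with prescribed initial velocity. Fix any $v\in T_{x_0}M$ and choose a $C^1$ curve $\g:[0,\e]\longrightarrow M$ with $\g(0)=x_0$ and $\dot\g(0)=v$. Applying \eqref{dominated} on $[0,t]$ for small $t>0$ and dividing by $t$, one gets
$$
\frac{e^{\l t}u(\g(t)) - u(x_0)}{t} \leq \frac{1}{t}\int_0^t e^{\l s}L(\g(s),\dot\g(s))\,ds.
$$
First I would let $t\to 0^+$. The right-hand side tends to $L(x_0,v)$ by continuity of $L$ and the fundamental theorem of calculus. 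For the left-hand side, since $du(x_0)$ exists, $u(\g(t)) = u(x_0) + t\,\langle du(x_0),v\rangle + o(t)$, and $e^{\l t} = 1 + \l t + o(t)$; multiplying out, the difference quotient converges to $\l u(x_0) + \langle du(x_0),v\rangle$. Hence we obtain, for every $v\in T_{x_0}M$,
$$
\l u(x_0) + \langle du(x_0),v\rangle \leq L(x_0,v).
$$

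Next I would take the supremum over $v\in T_{x_0}M$ on both sides, rewriting this as $\langle du(x_0),v\rangle - L(x_0,v) \leq -\l u(x_0)$ for all $v$. By the definition of $H$ as the Fenchel transform of $L$, namely $H(x_0,du(x_0)) = \sup_{v\in T_{x_0}M}\{\langle du(x_0),v\rangle - L(x_0,v)\}$, passing to the supremum yields
$$
H(x_0,du(x_0)) \leq -\l u(x_0),
$$
which is exactly the claimed inequality $\l u(x_0) + H(x_0,du(x_0))\leq 0$.

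The only delicate point is the justification that the one-sided derivative of $t\mapsto e^{\l t}u(\g(t))$ at $t=0$ equals $\l u(x_0)+\langle du(x_0),v\rangle$ using merely the differentiability of $u$ at the single point $x_0$ (not in a neighborhood): this is why one must expand $u(\g(t))$ via the definition of the differential $du(x_0)$ rather than invoking the chain rule, and why working in a coordinate chart around $x_0$ is convenient. Everything else is a routine limit computation; the superlinearity of $H$ is not even needed here, only that the Fenchel sup defining $H$ is a genuine supremum, so the inequality for each $v$ indeed passes to $H$. I would mention that this argument is the conformally symplectic analogue of \cite[Proposition 4.2.2]{Fathibook}, with the extra factor $e^{\l t}$ contributing precisely the discount term $\l u(x_0)$.
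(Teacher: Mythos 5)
Your proposal is correct and follows essentially the same route as the paper's proof: apply the domination inequality on $[0,t]$ along a curve with $\dot\gamma(0)=v$, let $t\to 0^+$ to get $\l u(x_0)+\langle du(x_0),v\rangle\leq L(x_0,v)$ for every $v$, then take the supremum over $v$ and use $H(x_0,\cdot)=\sup_v\{\langle\cdot,v\rangle-L(x_0,v)\}$. Your extra remark on justifying the one-sided derivative via the definition of $du(x_0)$ at the single point is a sound (and welcome) precision, but it does not change the argument.
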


\begin{proof}
Let $v\in T_{x_0}M$ and let $\g:[0,1]\longrightarrow M$ be a $C^1$ curve such that $\g(0)=x_0$ and $\dot{\g}(0)=v$. Since $u\prec_\l L$, we have for all $t\in (0,1]$
\begin{eqnarray*}
\frac{e^{\l t} u(\g(t)) - u(x_0)}{t} \leq \frac{1}{t}\int_0^t e^{\l s} L(\g(s), \dot{\g}(s))\,ds.
\end{eqnarray*}
If we let $t\rightarrow 0^+$, we obtain 
\begin{eqnarray*}
L(x_0, v) &\geq& \frac{d}{dt}\left(e^{\l t} u(\g(t))\right)_{\big|t=0} = \l u(x_0) + \langle du(x_0), v\rangle, \\
\end{eqnarray*}
and hence, for all $v \in T_{x_0}M$,
$$
 \l u(x_0) + \langle du(x_0), v\rangle - L(x_0, v) \leq  0. 
$$
From this inequality, we conclude that 
\begin{eqnarray*}
\l u (x_0) + H(x_0, du(x_0))  &=&  \l u (x_0) + \sup_{v\in T_{x_0}M} \left(
\langle du(x_0), v\rangle - L(x_0, v)  \right)\\
&=& \sup_{v\in T_{x_0}M} \left(
\langle \l u (x_0) + du(x_0), v\rangle - L(x_0, v)  \right) \leq 0
\end{eqnarray*}

\end{proof}

We define now a continuous analogue of classical solutions of the discounted Hamilton-Jacobi equation (\ref{eqHJ}).\\
Let $u\prec_\l L$. We say that a curve $\g:[a,b]\longrightarrow M$ is {\it $(u,L)$-$\l$-calibrated} (or simply {\it $(u,L)$-calibrated}, when there is no risk of ambiguity) if 
$$
e^{\l b} u(\g(b)) - e^{\l a} u(\g(a)) = \int_a^b e^{\l t} L(\g(t), \dot{\g}(t))\,dt.\\
$$
Before describing the properties of calibrated curves let us point out the following 
\begin{remark}\label{remarkcalibrated}
We have
\begin{itemize}
\item[({\it i})] The definition of calibrated curve continues to make sense if $a=-\infty$. In fact,  since the manifold $M$ is compact and $u$ is continuous on $M$, hence bounded, then 
$e^{\l a} u(\g(a)) \longrightarrow 0$ as $a\rightarrow -\infty$.
\item[({\it ii})] It is easy to check (for example by contradiction), that if $\g:[a,b]\longrightarrow M$ is {$(u,L)$-calibrated} and $[a',b'] \subset [a,b]$, then $\g_{\big| [a',b']}$ is still $(u,L)$-calibrated.
\item[({\it iii})] The condition of being a calibrated curve is clearly invariant under time-translations. Namely, it is a straightforward check that if $\g:[a,b]\longrightarrow M$ is $(u,L)$-calibrated, then $\g_T: [a-T,b-T]\longrightarrow M$ defined by $\g_T(s)=\g(T+s)$ is still $(u,L)$-calibrated.
\item[({\it iv})] It follows from the definition and the fact that $u\prec_\l L$, that if $\g:[a,b]\longrightarrow M$ is {$(u,L)$-calibrated}, then it minimizes the discounted Lagrangian action $\bA_{L,\l}$
among all continuous piecewise $C^1$ curves $\sigma: [a,b] \longrightarrow M$ such that $\sigma(a)=\g(a)$ and $\sigma(b)=\gamma(b)$. In fact:
\begin{eqnarray*}
\bA_{L,\l}(\g) &=& \int_a^b e^{\l t} L(\g(t), \dot{\g}(t))\,dt = e^{\l b} u(\g(b)) - e^{\l a} u(\g(a))\\
&=& e^{\l b} u(\s(b)) - e^{\l a} u(\s(a)) \leq \int_a^b e^{\l t} L(\s(t), \dot{\s}(t))\,dt =\bA_{L,\l}(\s).
\end{eqnarray*}
In particular, if $a=-\infty$, then $\g$ minimizes the action among all curves defined on $(-\infty, b]$  that ends at $\g(b)$ at time $t=b$ (see Remark \ref{defactionmin}).
Hence, $(\g,\dot{\g})$ is a solution of \eqref{ELeq}; in particular, proceeding as in \cite[Corollary 2.2.12]{Fathibook}), it follows that $\g$ is $C^2$. Actually, using Proposition \ref{propELHam} and equation (\ref{eq3}) one can prove that $\g$ is as smooth as the Lagrangian $L$ {(see also Remark \ref{defactionmin})}.\\
 \end{itemize}
\end{remark}

\noindent For classical solutions we can prove the following property.

\begin{proposition}\label{existencesolutioncase}
Let $u\in C^2(M)$ satisfy $\l u(x) + H(x, du(x)) =0 $ for every $x\in M$, and let $\g(t) = \pi \left( \Phi_{H,\l}^t(x_0, du(x_0)) \right)$ for some given $x_0 \in M$. Then  $\g$ is $(u,L)$- calibrated on every time-interval $[a,b]$.
\end{proposition}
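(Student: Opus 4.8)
The plan is to verify directly that the defining equality of a calibrated curve holds on every interval $[a,b]$, by computing the derivative of $t\mapsto e^{\l t}u(\g(t))$ along the orbit and recognizing the Legendre-Fenchel inequality \eqref{LegFenineq} as an equality. First I would set $p(t)=du(\g(t))$, so that $(\g(t),p(t))=\Phi^t_{H,\l}(x_0,du(x_0))$ by definition of $\g$, and recall from Proposition \ref{propELHam} (or directly from \eqref{eqmotion}) that $\dot\g(t)=\frac{\partial H}{\partial p}(\g(t),p(t))$, i.e.\ $(\g(t),\dot\g(t))=\cL_L^{-1}(\g(t),p(t))$, which is precisely the point where \eqref{LegFenineq} is an equality: $L(\g(t),\dot\g(t))+H(\g(t),p(t))=\langle p(t),\dot\g(t)\rangle_{\g(t)}$.

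Next I would compute, using the chain rule and the hypothesis $\l u(x)+H(x,du(x))\equiv 0$,
\begin{eqnarray*}
\frac{d}{dt}\left(e^{\l t}u(\g(t))\right)
&=& e^{\l t}\left(\l u(\g(t))+\langle du(\g(t)),\dot\g(t)\rangle\right)\\
&=& e^{\l t}\left(\l u(\g(t))+\langle p(t),\dot\g(t)\rangle_{\g(t)}\right)\\
&=& e^{\l t}\left(\l u(\g(t))+L(\g(t),\dot\g(t))+H(\g(t),p(t))\right)\\
&=& e^{\l t}\,L(\g(t),\dot\g(t)),
\end{eqnarray*}
where the last equality uses $H(\g(t),p(t))=H(\g(t),du(\g(t)))=-\l u(\g(t))$. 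Integrating this identity over $[a,b]$ yields
$$
e^{\l b}u(\g(b))-e^{\l a}u(\g(a))=\int_a^b e^{\l t}L(\g(t),\dot\g(t))\,dt,
$$
which is exactly the calibration equality. Since $u\in C^2(M)$ satisfies $\l u+H(x,du)\le 0$, we also have $u\prec_\l L$ by the $C^1$ case discussed before Proposition \ref{propsubsol}, so the definition of $(u,L)$-calibrated is applicable; thus $\g$ is $(u,L)$-calibrated on $[a,b]$, and since $[a,b]$ was arbitrary, on every time-interval.

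There is no serious obstacle here: the only point requiring a little care is the justification that $\langle du(\g(t)),\dot\g(t)\rangle$ can be replaced by $\langle p(t),\dot\g(t)\rangle_{\g(t)}$ with the equality case of \eqref{LegFenineq}, which is immediate from $p(t)=du(\g(t))$ and $(\g(t),\dot\g(t))=\cL_L^{-1}(\g(t),p(t))$ via the equation of motion \eqref{eqmotion}. Regularity of $\g$ (it is $C^1$, indeed as smooth as $L$) is inherited from the $C^2$ regularity of $u$ and smoothness of the flow $\Phi_{H,\l}$, so all manipulations above are legitimate.
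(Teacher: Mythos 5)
Your proof is correct and follows essentially the same route as the paper: equality in the Legendre--Fenchel inequality along the orbit, followed by integration of $\frac{d}{dt}\left(e^{\l t}u(\g(t))\right)$ using $\l u + H(x,du)=0$. The one imprecision is that $p(t)=du(\g(t))$ does \emph{not} hold ``by definition of $\g$'': it is the invariance of ${\rm Graph}(du)$ under $\Phi_{H,\l}$, which is precisely Proposition \ref{invariantlagrangiangraph}, and this is exactly the fact the paper invokes at the corresponding step.
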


\begin{proof}
We have already proved (see Proposition \ref{invariantlagrangiangraph}), that the corresponding Graph$(du)$ is invariant under $\Phi_{H,\l}$. The invariance condition implies that
$$
\frac{\partial L}{\partial v}(\g(t), \dot{\g}(t)) = du(\g(t)) \quad \forall \; t \in \R,
$$
hence we have equality in the corresponding Legendre-Fenchel inequality. Using this and the fact that $u$ solves (\ref{eqHJ}), we obtain:
\begin{eqnarray*}
&& e^{\l b} u(\g(b)) - e^{\l a} u(\g(a)) \ = \ \int_a^b \frac{d}{dt}\left(e^{\l t} u(\g(t))\right)\, dt \\
&& \qquad \quad=\  \int_a^b e^{\l t} \left( \l u(\g(t))+ \langle du(\g(t)), \dot{\g}(t)\rangle \right)\, dt \\
&& \qquad \quad = \ \int_a^b e^{\l t} \left( \l u(\g(t))+ H(\g(t), du(\g(t))) + L(\g(t), \dot{\g}(t))
 \right) dt \\
&& \qquad \quad = \ \int_a^b e^{\l t} L(\g(t), \dot{\g}(t))\,dt.
\end{eqnarray*}
\end{proof}

\vspace{10 pt}

A first step in order to provide a meaningful notion of weak solutions, is the following observation (see also \cite[Theorem 4.3.8]{Fathibook} for its analogue in the conservative case).

\begin{proposition}\label{propdiffcal1}
Let $u\prec_\l L$ and $\g:[a,b]\longrightarrow M$ a $(u,L)$-calibrated curve.
If for some $t_0\in[a,b]$, the derivative of $u$ at $\g(t)$ exists, then
$$
du(\g(t_0)) = \frac{\partial L}{\partial v} (\g(t_0), \dot{\g}(t_0)) \quad \mbox{and}\quad {\l u(\g(t_0))} + H(\g(t_0),du(\g(t_0)))=0.
$$
\end{proposition}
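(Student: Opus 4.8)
The plan is to mimic the classical argument (Fathi, Theorem 4.3.8), exploiting the fact that a $(u,L)$-calibrated curve minimizes the discounted action on every subinterval. First I would localize: fix $t_0 \in [a,b]$ and assume for concreteness that $t_0$ is interior (the endpoint cases are handled by one-sided variations, as in Remark \ref{defactionmin}). Since $\g$ is $(u,L)$-calibrated, by Remark \ref{remarkcalibrated}(iv) it minimizes $\bA_{L,\l}$ among curves with the same endpoints, so $(\g,\dot\g)$ solves the Euler-Lagrange equation \eqref{ELeq}; in particular $\g$ is $C^2$ and $\dot\g(t_0)$ is well defined. Set $p_0 := \frac{\partial L}{\partial v}(\g(t_0),\dot\g(t_0))$, i.e. $(\g(t_0),p_0)=\cL_L(\g(t_0),\dot\g(t_0))$.

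The core of the argument is a two-sided estimate on the difference quotient of $e^{\l t}u(\g(t))$ at $t_0$. On one hand, since $u \prec_\l L$, for any small $h>0$ we have $e^{\l(t_0+h)}u(\g(t_0+h)) - e^{\l t_0}u(\g(t_0)) \le \int_{t_0}^{t_0+h} e^{\l s} L(\g(s),\dot\g(s))\,ds$ and the analogous inequality on $[t_0-h,t_0]$; on the other hand, calibration on $[a,b]$ together with Remark \ref{remarkcalibrated}(ii) (calibration of restrictions) forces equality on $[t_0-h,t_0]$ and on $[t_0,t_0+h]$, hence
$$
e^{\l(t_0+h)}u(\g(t_0+h)) - e^{\l t_0}u(\g(t_0)) = \int_{t_0}^{t_0+h} e^{\l s} L(\g(s),\dot\g(s))\,ds.
$$
Dividing by $h$ and letting $h \to 0^+$, and using that $du(\g(t_0))$ exists so that $t\mapsto e^{\l t}u(\g(t))$ is differentiable at $t_0$ along $\g$, I get
$$
\l u(\g(t_0)) + \langle du(\g(t_0)), \dot\g(t_0)\rangle = L(\g(t_0),\dot\g(t_0)).
$$
Rearranging, $\langle du(\g(t_0)),\dot\g(t_0)\rangle - L(\g(t_0),\dot\g(t_0)) = -\l u(\g(t_0))$, while the Legendre-Fenchel inequality \eqref{LegFenineq} gives for every $v\in T_{\g(t_0)}M$ that $\langle du(\g(t_0)),v\rangle - L(\g(t_0),v) \le H(\g(t_0),du(\g(t_0)))$, with equality precisely when $(\g(t_0),du(\g(t_0))) = \cL_L(\g(t_0),v)$.

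It remains to combine these. From Proposition \ref{propsubsol} (applicable since $du(\g(t_0))$ exists), $\l u(\g(t_0)) + H(\g(t_0),du(\g(t_0))) \le 0$, i.e. $H(\g(t_0),du(\g(t_0))) \le -\l u(\g(t_0)) = \langle du(\g(t_0)),\dot\g(t_0)\rangle - L(\g(t_0),\dot\g(t_0))$. But the reverse inequality $\langle du(\g(t_0)),\dot\g(t_0)\rangle - L(\g(t_0),\dot\g(t_0)) \le H(\g(t_0),du(\g(t_0)))$ is just \eqref{LegFenineq}. Hence both are equalities: $\l u(\g(t_0)) + H(\g(t_0),du(\g(t_0))) = 0$, and the equality case in the Legendre-Fenchel inequality yields $du(\g(t_0)) = \frac{\partial L}{\partial v}(\g(t_0),\dot\g(t_0)) = p_0$, which is exactly the first claimed identity. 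I expect the only delicate point to be the regularity bookkeeping — justifying that $\dot\g(t_0)$ exists and that the difference quotient converges, for which one invokes that calibrated curves are $C^2$ extremals (Remark \ref{remarkcalibrated}(iv)); everything else is a direct chain of (in)equalities already established in the excerpt.
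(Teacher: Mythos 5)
Your proposal is correct and follows essentially the same route as the paper's proof: differentiate the calibration identity at $t_0$ to obtain $\l u(\g(t_0)) + \langle du(\g(t_0)), \dot\g(t_0)\rangle = L(\g(t_0),\dot\g(t_0))$, then combine the Legendre--Fenchel inequality with Proposition \ref{propsubsol} to force both inequalities to be equalities. The only cosmetic difference is the order in which the two inequalities are played off against each other; the paper first deduces $\l u + H \geq 0$ from Legendre--Fenchel and then invokes Proposition \ref{propsubsol}, whereas you state the subsolution bound first, but the content is identical.
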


\begin{proof}
Let assume that $a\leq t_0<b$ (similarly, one show it for $t_0=b$) and consider $h>0$ such that  $t_0<t_0+h<b$. Because of the calibration condition, we have
$$\frac{e^{\l (t_0+h)} u(\g(t_0+h)) - e^{\l t_0}u(\g(t_0))}{h} = \frac{1}{h}\int_{t_0}^{t_0+h} e^{\l s} L(\g(s), \dot{\g}(s))\,ds.$$
If we let $h\rightarrow 0^+$, we obtain 
\begin{eqnarray}\label{passaggio1}
e^{\l t_0}L(\g(t_0), \dot{\g}(t_0)) &=& \frac{d}{dt}\left(e^{\l t} u(\g(t))\right)_{\big|t=t_0} \nonumber \\
&=& e^{\l t_0} \l u(\g(t_0)) + e^{\l t_0} \langle du(\g(t_0)), \dot{\g}(t_0) \rangle \\
&\leq& e^{\l t_0} \big[\l u(\g(t_0)) + H(\g(t_0), du({\g}(t_0)))+ L(\g(t_0), \dot{\g}(t_0)) \big].\nonumber
\end{eqnarray}
Hence:
$$
\l u(\g(t_0)) + H(\g(t_0), du({\g}(t_0))) \geq 0
$$
and using Proposition \ref{propsubsol} we can conclude that 
$$\l u(\g(t_0)) + H(\g(t_0), du({\g}(t_0))) = 0.$$\\
Substituting this in (\ref{passaggio1}) and simplifying  the common factor $e^{\l t_0}$ we get:
$$
\langle du(\g(t_0)), \dot{\g}(t_0) \rangle =  L(\g(t_0), \dot{\g}(t_0)) + H(\g(t_0), du({\g}(t_0))).
$$
Therefore, the Legendre-Fenchel inequality is in this case an equality, and for what we have already recalled:
$$
(\g(t_0),  du(\g(t_0))) = \cL_L(\g(t_0), \dot{\g}(t_0)) \qquad \Longleftrightarrow \qquad du(\g(t_0)) = \frac{\partial L}{\partial v} (\g(t_0), \dot{\g}(t_0)).
$$
\end{proof}

\vspace{10 pt}

It follows that it is important to detect which points of the image of a calibrated curves are points of differentiability of a $\l$-dominated function. 
Observe that in general these functions are not differentiable everywhere (although, being Lipschitz, they are differentiable almost everywhere\footnote{Recall that on a smooth Riemannian manifold $V$ (in our case, it will be either $V=M$, $TM$ or $T^*M$) we say that a set $Z$ has measure zero if  
it has measure zero for the Riemannian volume measure associated to the Riemannian metric. 
In particular, for every coordinate chart  $\psi:  U \subset V \longrightarrow \R^k$, the image $\psi(U\cap Z)$ is a zero Lebesgue measure set in $\R^k$. }).

\begin{proposition}\label{propdiffcal2}
Let $u\prec_\l L$ and $\g:[a,b]\longrightarrow M$ a $(u,L)$-calibrated curve.
Then,  for every $t\in(a,b)$, the derivative of $u$ at $\g(t)$ exists. 
\end{proposition}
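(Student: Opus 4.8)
The plan is to establish differentiability of $u$ along the interior of a calibrated curve by a standard two-sided barrier (or ``sandwich'') argument, which is the conformally symplectic analogue of \cite[Theorem 4.3.8]{Fathibook}. Fix $t_0\in(a,b)$ and set $x_0=\g(t_0)$. Since $u\prec_\l L$, the function $u$ is Lipschitz, hence it suffices to exhibit, at the point $x_0$, a $C^1$ function that touches $u$ from above and one that touches it from below, with the \emph{same} differential at $x_0$; then $u$ is differentiable at $x_0$ and its differential coincides with the common one.

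First I would construct the two barriers from the calibrated curve itself using the (discounted) action potential. For a short forward piece, define for $y$ near $x_0$
$$
u^{+}(y) := e^{-\l t_0}\Big( e^{\l(t_0+h)} u(\g(t_0+h)) - \inf_{\s} \int_{t_0}^{t_0+h} e^{\l t} L(\s(t),\dot\s(t))\,dt\Big),
$$
where the infimum is over $C^1$ curves $\s:[t_0,t_0+h]\to M$ with $\s(t_0)=y$, $\s(t_0+h)=\g(t_0+h)$, and $h>0$ is small. The domination inequality \eqref{dominated} gives $u^{+}\ge u$ near $x_0$, while the calibration equality along $\g|[t_0,t_0+h]$ forces $u^{+}(x_0)=u(x_0)$. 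A symmetric construction using the backward piece $\g|[t_0-h,t_0]$ yields a function $u^{-}\le u$ with $u^{-}(x_0)=u(x_0)$. Both $u^{\pm}$ are $C^1$ near $x_0$: this is where the Tonelli hypotheses enter, via the fact (already implicit in Remark \ref{defactionmin} and the discussion of extremals) that for small time the action potential between nearby points is $C^1$ in the free endpoint, the minimizer being a genuine $C^2$ extremal of $\bA_{L,\l}$ depending smoothly on the endpoint; here $e^{\l t}$ is bounded and bounded away from zero on the compact interval, so all the classical local regularity for minimizers of $L$ carries over verbatim.

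The crucial step is to check that $du^{+}(x_0)=du^{-}(x_0)$. By Proposition \ref{propdiffcal1} applied at the interior endpoints of the short calibrated subarcs — or, more directly, by a first-variation computation — the differential of the forward potential at its free endpoint $x_0$ equals $-\frac{\partial L}{\partial v}(x_0,\dot\g(t_0))$ and hence $du^{+}(x_0)=\frac{\partial L}{\partial v}(x_0,\dot\g(t_0))$; the backward potential contributes the opposite sign in the transversality condition, so that $du^{-}(x_0)$ also equals $\frac{\partial L}{\partial v}(x_0,\dot\g(t_0))$. Thus $u^{-}\le u\le u^{+}$ near $x_0$ with $u^{-}(x_0)=u(x_0)=u^{+}(x_0)$ and $du^{-}(x_0)=du^{+}(x_0)$, which forces $u$ to be differentiable at $x_0$ with $du(x_0)=\frac{\partial L}{\partial v}(x_0,\dot\g(t_0))$.

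The main obstacle is the $C^1$ regularity of the short-time action potential in its free endpoint, uniformly as the endpoint varies near $x_0$ — that is, justifying that the minimizer exists, is unique, is a $C^2$ extremal, and depends $C^1$-smoothly on the endpoint. This is entirely classical for Tonelli Lagrangians (no conjugate points in short time, smooth dependence on boundary data for the Euler–Lagrange equation \eqref{ELeq}), and the weight $e^{\l t}$ does not affect it since on $[t_0-h,t_0+h]$ it merely rescales $L$ by a factor bounded above and below; but it must be invoked carefully, e.g. by reducing to a coordinate chart and citing the standard existence/regularity theory (as in \cite[Section 3.6 and Corollary 2.2.12]{Fathibook} or \cite[Chapter 3 \S 12]{Arnold}). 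Once this is in hand, the sandwich argument above closes the proof.
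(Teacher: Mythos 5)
Your proof is correct in substance and closes the same way as the paper's (a two--sided $C^1$ sandwich at $x_0$ plus the elementary ``squeeze'' step at the end), but the barriers are built by a genuinely different mechanism. The paper does \emph{not} use the short-time action potential: working in a chart, it takes the explicit competitors $\g_y(s)=\g(s)+\frac{s-a}{t-a}(y-x)$ on $[a,t]$ and $\s_y(s)=\g(s)+\frac{b-s}{b-t}(y-x)$ on $[t,b]$, so the barriers $\psi_\pm(y)$ are integrals of $L$ along explicitly parametrized curves and are automatically as smooth as $L$; domination gives one inequality, calibration gives touching at $x$, and the equality $\nabla\psi_+(x)=\nabla\psi_-(x)$ comes for free from $\psi_+-\psi_-\geq 0$ vanishing at $x$, with no first-variation or transversality computation. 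Your route replaces these explicit curves by the value function $\inf_\s\int e^{\l t}L$, which buys you the identity $du(x_0)=\frac{\partial L}{\partial v}(x_0,\dot\g(t_0))$ as a by-product (the paper derives it separately in Proposition \ref{propdiffcal1}), but at the cost of invoking the short-time Tonelli regularity theory (existence, uniqueness, and $C^1$ dependence on the free endpoint of minimizers), which the paper's construction deliberately avoids since its competitors need not be minimizers at all. Two points to fix in your write-up: (1) the inequality labels are swapped --- domination applied to a curve \emph{starting} at $y$ and ending at $\g(t_0+h)$ gives $e^{\l t_0}u(y)\geq e^{\l(t_0+h)}u(\g(t_0+h))-\int e^{\l t}L$, so your forward potential is a \emph{lower} barrier and the backward one is the upper barrier (consistent with the paper, where $\psi_+$ comes from the piece on $[a,t]$ ending at $y$); the argument is unaffected after relabeling. (2) The touching $u^{\pm}(x_0)=u(x_0)$ uses that $\g$ restricted to the short subinterval actually attains the infimum, which is Remark \ref{remarkcalibrated}~({\it iv}) and should be cited explicitly.
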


\begin{proof}
The proof is similar to \cite[Theorem 4.3.8 ii)]{Fathibook}. Fix $t\in [a,b]$ and let $x:=\g(t)$. Without any loss of generality, we assume to be working in a coordinate chart $\varphi: U \longrightarrow \R^n$ on $M$ and assume that  $\g([a,b])\subset U$ (otherwise, we take a smaller interval containing $t$). To simplify notation, we identify $U$ with $\R^n$ via $\varphi$.\\
For every $y\in U$, we construct a new curve defined on $[a,t]$, such that at time $t$ it passes through $y$. Define this curve $\g_y:[a,t] \longrightarrow U$ by
$$
\g_y(s) = \g(s) + \frac{s-a}{t-a}(y-x).
$$
We have that $\g_y(a)=\g(a)$ and $\g_y(t)=y$. Observe that $\g_x$ coincides with $\g$ on the interval $[a,t]$. Since $u\prec_\l L$ we obtain:
$$
e^{\l t} u(y) - e^{\l a} u(\g_y(a)) \leq \int_a^t e^{\l s} L(\g_y(s), \dot{\g}_y(s)) ds
$$
which implies
\begin{eqnarray*}
u(y) &\leq& e^{-\l t } \left( e^{\l a} u(\g(a)) + \int_a^t e^{\l s} L(\g_y(s), \dot{\g}_y(s)) ds \right)\\
&=& e^{-\l t } \left( e^{\l a} u(\g(a)) + \int_a^t e^{\l s} L\left(
\g(s) + \frac{s-a}{t-a}(y-x), \dot{\g}(s) + \frac{y-x}{t-a}\right) ds \right)\\
&=:& \psi_+(y).
\end{eqnarray*}
Observe that $\psi_+$ is $C^1$ (actually, since $\g$ is as smooth as $L$, it is as smooth as $L$) and that we have equality at $x$.\\
\noindent Similarly, for every $y\in U$ we construct a curve defined on $[t,b]$ that at time $t$ it passes through $y$. 
Define this curve $\s_y:[a,t] \longrightarrow U$ by
$$
\s_y(s) = \g(s) + \frac{b-s}{b-t}(y-x).
$$
We have that $\s_y(b)=\g(b)$ and $\s_y(t)=y$. Observe that $\s_x$ coincides with $\g$ on the interval $[t,b]$. Since $u\prec_\l L$ we obtain:
$$
e^{\l b} u(\s_y(b)) - e^{\l t} u(y) \leq \int_t^b e^{\l s} L(\s_y(s), \dot{\s}_y(s)) ds
$$
which implies
\begin{eqnarray*}
u(y) &\geq& e^{-\l t } \left( e^{\l b} u(\g(b)) - \int_t^b e^{\l s} L(\s_y(s), \dot{\s}_y(s)) ds \right)\\
&=& e^{-\l t } \left( e^{\l b} u(\g(b)) + \int_t^b e^{\l s} L\left(
\g(s) + \frac{b-s}{b-t}(y-x), \dot{\g}(s) - \frac{y-x}{b-t}\right) ds \right)\\
&=:& \psi_-(y).
\end{eqnarray*}
Observe that also $\psi_-$ is $C^1$ and that we have equality at $x$.\\
In conclusion, we have that
\begin{equation}\label{ineqpsi}
\psi_-(y) \leq u(y) \leq \psi_+(y) \qquad \forall\; y\in M
\end{equation}
with equality at $x$. Observe that the $C^1$ function $\psi_+ -\psi_- \geq 0$ and vanishes at $x$, therefore 
$\nabla(\psi_+ -\psi_-)(x)=0$. If we denote $p:= \nabla \psi_+ (x)=  \nabla \psi_-(x)$, it is easy to check that $u$ is differentiable at
$x$ and that $\nabla u(x)=p$. 
In fact, by definition of derivative, using that 
$\psi_-(x) =  \psi_+(x)=u(x)$, we have that 
$
\psi_\pm = u(x) + p\cdot (y-x) + r_\pm (y-x),
$
where $r(h)=o(h)$ as $h\rightarrow 0$. If we use it to rewrite inequality (\ref{ineqpsi}), we obtain:
$$
u(x) + p\cdot (y-x) + r_- (y-x) \leq u(y) \leq u(x) + p\cdot (y-x) + r_+ (y-x)
$$
which implies 
$$
u(y) = u(x) + p\cdot (y-x) + o(\|y-x\|).
$$
This clearly means that $u$ is differentiable at $x$ and $\nabla u(x)=p$.
\end{proof}

\vspace{10 pt}

Using these observations (and keeping in mind the analogy with the conservative case), we  provide the following definition.\\

 A function $u: M \longrightarrow \R$ is  called a {\it weak KAM solution} to the $\l$-discounted Hamilton-Jacobi equation if:
 \begin{itemize}
 \item[(i)] $u\prec_\l L$;
 \item[(ii)] for every $x\in M$ there exists $\g: (-\infty,0] \longrightarrow M$ with $\g(0)=x$, which is $(u,L)$-$\l$-calibrated.\\
 \end{itemize}

\begin{remark}
Suppose  that $u\prec_\l L$ and that there exists a $(u,L)$-calibrated curve $\g:(-\infty,0] \longrightarrow M$ such that $\g(0)=x_0$.
Then, for all $t<0$ we have:
$$
u(x_0) - e^{\l t} u(\g(t))  = \int_t^0 e^{\l s} L(\g(s),\dot{\g}(s))\, ds.
$$
If we take the limit as $t\rightarrow -\infty$, since $u$ is bounded we obtain:
$$
u(x_0) = \int_{-\infty}^0 e^{\l s} L(\g(s),\dot{\g}(s))\, ds.
$$
In particular, since $u\prec_\l L$ (see (\ref{dominated}))
\begin{eqnarray*}
 u(x_0) = \inf_\s \left( \int_{-\infty}^0 e^{\l s} L(\s(s),\dot{\s}(s))\, ds\right),
\end{eqnarray*}
where the infimum (which in this case is a minimum) is taken over all continuous piecewise $C^1$ curves $\s: (-\infty,0] \longrightarrow M$ such that $\s(0)=x_0$.\\
Therefore, if such a weak KAM solution exists, then its value at $x_0$ is determined uniquely. We shall see in Proposition \ref{theoremonu} that there exists a function $\u$ satisfying this condition at each point $x\in M$ and, as a consequence of what we have just pointed out, it is unique.
\\
\end{remark}

\noindent Inspired by this, we define the following function. For every $x\in M$, let
\begin{equation}\label{defu}
\u(x) = \inf_\s \left( \int_{-\infty}^0 e^{\l s} L(\s(s),\dot{\s}(s))\, ds\right),
\end{equation}
where the infimum is taken over all continuous piecewise $C^1$ curves $\s: (-\infty,0] \longrightarrow M$ such that $\s(0)=x$. \\
This function has these important properties, proved in \cite[Appendix 2]{DFIZ} and references therein.

\begin{proposition}\label{theoremonu}
Let $\u$ be defined as above. Then:
\begin{enumerate}
\item $\u$ is well-defined and Lipschitz continuous. In particular, the Lipschitz constant does not depend on $\l$ but only on $L$.
\item $\u \prec_\l L$.
\item For every $x$, there exists a curve $\g_x: (-\infty,0] \longrightarrow M$ which achieves the infimum in (\ref{defu}). In particular, $\g_x$ is $(\u, L)$-calibrated, which implies that
for any $t<0$:
$$ \u(x) =   e^{\l t} \u(\g_x(t)) - \int_t^0 e^{\l s} L(\g_x(s), \dot{\g}_x(s))ds.   $$
\item There exists a constant $A$ (depending on $L$, but not on $\l$) such that for all $x\in M$, $\|\dot{\g}_x\|_\infty \leq A$. \\
\end{enumerate}
\end{proposition}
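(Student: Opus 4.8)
The plan is to establish the four claims in order, extracting the necessary estimates directly from the structure of the minimization problem defining $\u$. First I would show that $\u$ is well-defined, i.e. that the infimum in \eqref{defu} is finite. Finiteness from above comes from testing against a constant curve $\s(s)\equiv x$: since $M$ is compact, $L(x,0)$ is bounded, so $\int_{-\infty}^0 e^{\l s} L(x,0)\,ds = \frac{1}{\l} L(x,0) < +\infty$. Finiteness from below uses the superlinearity of $L$ (or just a lower bound $L(x,v) \geq -C_0$, which holds by compactness of $M$ after using superlinearity to control the behaviour at infinity): then $\int_{-\infty}^0 e^{\l s} L(\s,\dot\s)\,ds \geq -C_0/\l$, so $\u(x) \geq -C_0/\l$. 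Here I already see that the naive bounds depend on $\l$, so the uniform Lipschitz estimate in claim (1) will require a more careful argument — this is the main obstacle, and I address it below.

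For the Lipschitz estimate independent of $\l$, the idea is the standard a priori compactness argument for Tonelli minimizers, adapted to the discounted setting (this is exactly what \cite[Appendix 2]{DFIZ} carries out, so I would invoke it, but let me sketch the mechanism). Given $x, y \in M$ close, and a near-optimal curve $\s_y:(-\infty,0]\to M$ for $\u(y)$, one modifies it on a fixed short time interval $[-1,0]$ to produce a competitor $\s_x$ for $\u(x)$ with $\s_x(0) = x$ and $\s_x(s)=\s_y(s)$ for $s \leq -1$; the modification costs at most $K\, d(x,y)$ where $K$ depends only on $L$ (via a bound on $L$ over a compact set of velocities of size $\sim d(x,y)$, using the $C^1$-regularity and local boundedness of $L$), and crucially the weight $e^{\l s}$ on $[-1,0]$ is bounded by $1$ independently of $\l$. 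This gives $\u(x) \leq \u(y) + K\,d(x,y)$, hence $|\u(x)-\u(y)| \leq K\,d(x,y)$ with $K = K(L)$. The delicate point — and where I expect to spend the most care — is the uniform a priori bound on velocities of minimizers (claim (4)): one shows that any minimizing curve $\g_x$ cannot have $\|\dot\g_x\|$ large on a set of positive measure near $t=0$, because replacing a fast arc by a slower reparametrized one decreases the action thanks to superlinearity, and the $e^{\l s}$ weights in the relevant estimates are all comparable to constants on bounded time windows; the bound $A = A(L)$ then follows. This simultaneously delivers claims (2) and (3): $\u \prec_\l L$ follows because for any curve $\sigma:[a,b]\to M$ one can splice an optimal tail at $\sigma(a)$ to get a competitor for $\u(\sigma(b))$, yielding exactly the inequality \eqref{dominated}; and existence of the minimizer $\g_x$ follows from the a priori velocity bound (which confines near-minimizers to an equi-Lipschitz, hence by Ascoli precompact, family) together with lower semicontinuity of the action under uniform convergence, a standard consequence of convexity of $L$ in $v$. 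The calibration identity in claim (3) is then immediate: for $t<0$, splitting $\int_{-\infty}^0 = \int_{-\infty}^t + \int_t^0$ and using that the restriction $\g_x|_{(-\infty,t]}$ must itself be optimal for the problem defining $\u(\g_x(t))$ (otherwise one could improve $\g_x$), one gets $\u(x) = e^{\l t}\u(\g_x(t)) - \int_t^0 e^{\l s} L(\g_x,\dot\g_x)\,ds$, i.e. the $(\u,L)$-calibration property on every $[t,0]$, and by Remark \ref{remarkcalibrated}(ii) on every subinterval.

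Since all of this is already carried out in \cite[Appendix 2]{DFIZ} in precisely this generality, in the paper I would keep the proof short: state that claims (1)--(4) are established there, and only highlight the two points that matter downstream, namely that the Lipschitz constant in (1) and the velocity bound $A$ in (4) depend on $L$ alone and not on $\l$ — this $\l$-uniformity being exactly what makes the limit $\l \to 0^+$ analysis in Section \ref{sec:conv} possible.
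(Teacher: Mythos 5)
Your proposal is correct and ultimately takes the same route as the paper, which gives no independent proof and simply cites \cite[Appendix 2]{DFIZ}; your sketch of the underlying mechanisms (the $\l$-uniform modification argument on a unit time window, the a priori velocity bound, splicing for domination, and the dynamic programming identity for calibration) is consistent with what that reference establishes. Nothing further is needed.
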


\vspace{10 pt}
Observe that since $\u$ is Lipschitz, by Rademacher's theorem it is differentiable everywhere and therefore it satisfies the equation
$$
\l \u + H(x, d\u(x))=0 \quad {\rm a.e.}
$$
In particular, for $\l>0$ this equation satisfies a strong comparison principle (see for example \cite[Th\'eor\`eme 2.4]{Barles}) and therefore this is the unique solution.\\


\section{The Aubry set}\label{sec:aubry}
In this section we are going to define the analogue of the Aubry set in the conformally symplectic framework. \\

For every $(x,v)\in TM$ let us denote by $\g_{(x,v)}$ the projection on $M$ of the corresponding orbit, \ie $\g_{(x,v)}(t)=\pi\left( \Phi^t_{L,\l}(x,v)\right)$ for all $t\in \R$. 

We define the following set. 
\begin{equation}\label{defcSt}
\cSt_{L,\l} := \left\{ (x,v)\in TM \; \mbox{s.t. the curve}\; \g_{(x,v)} \;  \mbox{is}\; (\u,L)\mbox{-calibrated on}\; (-\infty,0]
\right\}.
\end{equation}
\noindent We note that the following properties of $\cSt_{L,\l}$ hold.

\begin{itemize}
\item[s.1)] $\cSt_{L,\l}\neq \emptyset$, as it follows from item (3) in Proposition \ref{theoremonu}. More specifically, $\pi(\cSt_{L,\l}) = M$. In general this projection does not  need to be injective.

\item[s.2)] $\cSt_{L,\l}$ is bounded, as it follows from item (4) in  Proposition \ref{theoremonu}.

\item[s.3)] $\cSt_{L,\l}$ is backward-invariant, \ie $\Phi_{L,\l}^{-t}(\cSt_{L,\l}) \subseteq \cSt_{L,\l}$ for all $t\geq 0$. Essentially, this means that if $(x,v)\in \cSt_{L,\l}$, then $\Phi_{L,\l}^{-t}(x,v)\in \cSt_{L,\l}$ for all $t\geq0$, but it is straightforward from the calibration condition of $\g_{(x,v)}$ and the fact that  $\g_{ \Phi_{L,\l}^{-t}(x,v) }(s) = \g_{(x,v)}(s-t)$ for all $s\leq 0$ (see Remark \ref{remarkcalibrated}, item {\it iii})). Actually, one has that $\g_{ \Phi_{L,\l}^{-t}(x,v) }$ is calibrated on the larger interval $(-\infty,t]$.

\item[s.4)] For every $t>0$, $\Phi_{L,\l}^{-t}(\cSt_{L,\l})$ is compact.  In fact, fix $t>0$ and take any sequence $\{(x_n,v_n)\}_n\subset \Phi_{L,\l}^{-t}(\cSt_{L,\l})$.
We consider the corresponding (minimizing) curves $\g_n = \g_{(x_{n},v_{n})}$ which are calibrated on $(-\infty,t]$, as showed in item s.3.  If we apply \cite[Theorem 6.4]{DFIZ} (plus a diagonal argument), we obtain that there
exists a subsequence $\g_{n_k}$ converging to a curve $\bar{\g}:(-\infty,t] \longrightarrow M$  uniformly on compact subsets of $(-\infty,t]$. Since the action-functional is lower semi-continuous and $\u$ is $\l$-dominated, the curve $\bar{\g}$ is $(\u,L)$-calibrated on $(-\infty,t]$, hence $C^1$. Therefore $(\bar{x},\bar{v})=(\bar{\g}(0), \dot{\bar{\g}}(0)) \in \Phi_{L,\l}^{-t}(\cSt_{L,\l})$ and clearly, for every $s\leq t$, $\bar{\g} = \pi\Phi_{L,\l}^{s}(\bar{x},\bar{v})$ where $\pi:TM\rightarrow M$ denotes the canonical projection. 
Using Propositions \ref{propdiffcal1} and \ref{propdiffcal2}, the properties of the Legendre transform $\cL_L$ and the above convergence result for $\g_{n_k}$, we conclude:
\begin{eqnarray*}
(x_{n_k},v_{n_k}) &=& \left(\g_{n_k}(0),\dot{\g}_{n_k}(0)\right) = \cL_{L}^{-1}\left( \g_{n_k}(0),d\u({\g}_{n_k}(0)) \right) \\
&& \stackrel{n_k\rightarrow +\infty}{\longrightarrow} \;
\cL_{L}^{-1}\left( \bar{\g}(0),d\u(\bar{\g}(0)) \right) = (\bar{x},\bar{v}).
\end{eqnarray*}
Hence, $\Phi_{L,\l}^{-t}(\cSt_{L,\l})$ is compact for any $t>0$.\\
\end{itemize}

We are now ready to define the analog of the Aubry set as

\begin{equation} \label{defA}
\cAt_{L,\l} := \bigcap_{t\geq 0} \Phi^{-t}_{L,\l}(\cSt_{L,\l}) = \bigcap_{t>0} \Phi^{-t}_{L,\l}(\cSt_{L,\l}),
\end{equation}
where the last equality follows from the fact  $\cSt_{L,\l}$ is backward-invariant, see item s.3 above.

Let us now describe some properties of $\cAt_{L,\l}$.
\begin{itemize}
\item[a.1)] $\cAt_{L,\l}\neq \emptyset$ since it is intersection of a decreasing family of compact sets. In particular, $\cAt_{L,\l}$ is compact.

\item[a.2)] $\cAt_{L,\l}$ is invariant. 

It is a consequence of its definition (\ref{defA}), using the fact that $\cSt_{L,\l}$ is  backward-invariant. More precisely, $\cAt_{L,\l}$ is backward-invariant being the intersection of backward-invariant sets. Moreover, we have that for $s>0$, $\Phi^{s}_{L,\l}(\cAt_{L,\l}) \subseteq \cAt_{L,\l}$. In fact:
\begin{eqnarray*}
\Phi^{s}_{L,\l}(\cAt_{L,\l}) &=& \Phi^{s}_{L,\l} \left( \bigcap_{t\geq0} \Phi^{-t}_{L,\l}(\cSt_{L,\l}) \right) = \bigcap_{t\geq0} \Phi^{-t+s}_{L,\l}(\cSt_{L,\l})\\
&=&\bigcap_{t\geq-s} \Phi^{-t}_{L,\l}(\cSt_{L,\l}) \subseteq \bigcap_{t\geq0} \Phi^{-t}_{L,\l}(\cSt_{L,\l}) = \cAt_{L,\l}.
\end{eqnarray*}

In particular, every invariant set $\Lambda \subset \cSt_{L,\l}$ must be contained in $\cAt_{L,\l}$. In fact, if $\L$ is invariant and contained in $\cSt_{L,\l}$, then $\Phi_{L,\l}^{t} (\L) \subseteq \cSt_{L,\l}$ for all $t\geq 0$. Hence,   $\L \subseteq \Phi_{L,\l}^{-t} (\cSt_{L,\l})$ for all $t\geq0$. It follows from the definition of $\cAt_{L,\l}$ in (\ref{defA}) that $\L \subseteq \cAt_{L,\l}$.
\item[a.3)] Orbits starting in $\cAt_{L,\l}$ have special calibrating properties. Namely, if $(x,v) \in \cAt_{L,\l}$ then  
the curve  $\g_{(x,v)}$ is $(\u,L)$-calibrated on $(-\infty,+\infty)$.
In fact, observe that  if $(x,v) \in \cAt_{L,\l}$, then  $(x,v) \in \Phi_{L,\l}^{-t}(\cSt_{L,\l})$ for all $t\geq0$. So, as we have remarked above in s.3, the curve $\g_{(x,v)}$ is calibrated on  $(-\infty,t]$. Since this is true for all $t>0$, then this proves the assertion. \\
In particular, observe that calibration implies that they are action-minimizers (see Remark \ref{remarkcalibrated}, ({\it iv})).

\item[a.4)] The projection
$\pi : \cAt_{L,\l}  \longrightarrow M$ such that  $\pi(x,v)=x$ is injective. In fact, if $(x,v)\in \cAt_{L,\l}$, then it can be deduced from Propositions \ref{propdiffcal1} and \ref{propdiffcal2}, that $\u$ is differentiable at $\g_{(x,v)}(0)=x$ and that $(x,v)= \cL_{L}^{-1}(x, d\u(x))$; hence, $v$ is determined uniquely by $x$. More specifically, 
$$
v= \frac{\partial H}{\partial p}(x, d\u(x)) \qquad\Longleftrightarrow \qquad d\u (x) = \frac{\partial L}{\partial v}(x, v).
$$
In particular, if we denote by $\cA_{L,\l} := \pi \left( \cAt_{L,\l} \right)$, then we have that 
\begin{equation}
\cAt_{L,\l} = \left\{  \left(x, \frac{\partial H}{\partial p}(x, d\u (x)\right):\quad x\in \cA_{L,\l} \right\}.
\end{equation}
Observe that the map $du$ is well-defined on $\cA_{L,\l}$ and it coincides with $\frac{\partial L}{\partial v}  \circ  (\pi_{\big|{\cAt_{L,\l}}})^{-1}$. So it follows from the compactness of $\cAt_{L,\l}$ that this map is Lipschitz, with a Lipschitz constant which is independent of $\l$ (this latter property is a consequence of  item 4 in Proposition \ref{theoremonu}. This can be summarized by saying that
$
\pi: \cAt_{L,\l} \longrightarrow \cA_{L,\l}
$
is a bi-Lipschitz homeomorphism. This is the analogue of Mather's graph theorem  for the conservative case (see \cite[Theorem 2]{Mather91}).\\
\end{itemize}

\noindent Let us briefly describe the relation between this set and invariant exact Lagrangian graphs.

\begin{proposition}\label{AubryLaggraph}
Let $\L$ be a $C^1$ invariant exact Lagrangian graph for $\Phi_{H,\l}$. Then
$$
\Lambda = \cL_{L}(\cAt_{L,\l}).\\
$$
\end{proposition}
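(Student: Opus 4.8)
The plan is to identify the $C^2$ function generating $\L$ with the unique weak KAM solution $\u$, and then to establish the two inclusions $\cL_L(\cAt_{L,\l})\subseteq\L$ and $\L\subseteq\cL_L(\cAt_{L,\l})$ using, respectively, the graph property of the Aubry set (item a.4) and its maximality among invariant subsets of $\cSt_{L,\l}$ (item a.2).

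First we would write $\L=\mathrm{Graph}(du)$ with $u\in C^2(M)$ and invoke Proposition \ref{invariantlagrangiangraph}: the invariance of $\L$ under $\Phi_{H,\l}$ gives $\l u(x)+H(x,du(x))\equiv c$ for some $c\in\R$. Replacing $u$ by $u-c/\l$ (which leaves $du$, hence $\L$, unchanged) we may assume $c=0$, so that $u$ is a $C^2$, hence Lipschitz, solution of $\l u+H(x,du(x))=0$ on $M$. Since for $\l>0$ this equation admits a unique such solution (Remark \ref{remarkconstant}(ii); see also the discussion after Proposition \ref{theoremonu}), we conclude $u=\u$; in particular $\u\in C^2(M)$ and $d\u=du$ everywhere.

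For the inclusion $\cL_L(\cAt_{L,\l})\subseteq\L$: given $(x,v)\in\cAt_{L,\l}$, item a.4) gives that $\u$ is differentiable at $x$ (automatic here, as $\u=u\in C^2$) and $(x,v)=\cL_L^{-1}(x,d\u(x))=\cL_L^{-1}(x,du(x))$, whence $\cL_L(x,v)=(x,du(x))\in\mathrm{Graph}(du)=\L$. For the reverse inclusion we would consider $\L':=\cL_L^{-1}(\L)\subset TM$. As $\cL_L$ conjugates $\Phi_{H,\l}$ and $\Phi_{L,\l}$ (Proposition \ref{propELHam}) and $\L$ is $\Phi_{H,\l}$-invariant, $\L'$ is $\Phi_{L,\l}$-invariant, and it is compact since $\L$ is. The key point is that $\L'\subseteq\cSt_{L,\l}$: fixing $(x_0,v_0)\in\L'$ with $\cL_L(x_0,v_0)=(x_0,du(x_0))\in\L$, Proposition \ref{existencesolutioncase} (applicable since $u=\u\in C^2$ solves the discounted Hamilton--Jacobi equation with constant $0$) shows that $\g(t):=\pi(\Phi_{H,\l}^t(x_0,du(x_0)))=\g_{(x_0,v_0)}(t)$ is $(\u,L)$-calibrated on every finite interval $[a,0]$; letting $a\to-\infty$ and using that $\u$ is bounded, both sides of the calibration identity converge and we obtain calibration on $(-\infty,0]$, i.e. $(x_0,v_0)\in\cSt_{L,\l}$. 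Thus $\L'$ is an invariant subset of $\cSt_{L,\l}$, so item a.2) gives $\L'\subseteq\cAt_{L,\l}$, i.e. $\L=\cL_L(\L')\subseteq\cL_L(\cAt_{L,\l})$, and combining with the first inclusion yields $\L=\cL_L(\cAt_{L,\l})$.

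We expect the only genuinely delicate step to be the identification $u=\u$ in the second paragraph above, which depends on normalizing the Hamilton--Jacobi constant to zero and then invoking the uniqueness (strong comparison) for the $\l$-discounted equation; once this is secured, both inclusions are essentially formal consequences of the already established properties a.2) and a.4) of $\cAt_{L,\l}$, together with Propositions \ref{invariantlagrangiangraph}, \ref{existencesolutioncase} and \ref{propELHam}.
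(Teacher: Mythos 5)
Your proposal is correct and follows essentially the same route as the paper: identify the generating function with $\u$ via Proposition \ref{invariantlagrangiangraph} and the comparison principle, show $\cL_L^{-1}(\L)\subseteq\cSt_{L,\l}$ via Proposition \ref{existencesolutioncase}, deduce $\cL_L^{-1}(\L)\subseteq\cAt_{L,\l}$ from invariance (item a.2), and conclude equality from the graph property (item a.4). Your explicit normalization of the Hamilton--Jacobi constant to zero and the limit $a\to-\infty$ in the calibration identity are points the paper passes over more quickly, but the argument is the same.
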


\begin{proof}
Since $\Lambda$ is an exact Lagrangian graph, then $\Lambda={\rm Graph}(dv)$ for some $v\in C^2(M)$. It follows from the invariance of $\Lambda$, that $v$ is a classical solution to
the $\lambda$-discounted Hamilton-Jacobi equation (see Proposition \ref{invariantlagrangiangraph}). As we have already remarked before,
for $\l>0$ this equation satisfies a strong comparison principle (see for example \cite[Th\'eor\`eme 2.4]{Barles}) and therefore it admits a unique solution (including weak solutions), which implies that $v=\u$.\\
For simplifying the notation, in the following we denote $\widetilde{\L} = \cL_L^{-1}(\Lambda)$. It follows from Proposition \ref{existencesolutioncase} that $\widetilde{\L}\subseteq \cSt$ and since it is invariant
$$ \widetilde{\L} = \Phi^t_{L,\l}(\widetilde{\L}) \subseteq \Phi^t_{L,\l}(\cSt) \qquad \qquad \forall\, t\in \R.$$
In particular,  we can conclude from (\ref{defA}) that 
$$
\cAt_{L,\l} = \bigcap_{t\geq0} \Phi^{-t}_{L,\l}(\cSt_{L,\l}) \supseteq \widetilde\Lambda,
$$
and because of the graph property (see item (a.4) after (\ref{defA})), they must coincide:  $\cAt_{L,\l} = \widetilde\Lambda$. This concludes the proof.
\end{proof}

\begin{remark}  
In \cite{CCD, CCD2} the authors studied the persistence of KAM tori ({\it i.e.},  smooth invariant Lagrangian graphs on which the dynamics is conjugated to a rotation) under small perturbations of conformally symplectic vector fields. 
Observe that whenever a KAM torus exists, then it is unique and it coincides with the Aubry set defined above (this follows from Proposition \ref{AubryLaggraph} and Remark \ref{remnonexact}).
\end{remark}

\vspace{10 pt}


\section{Global Behaviour and Attractiveness}\label{sec:attract}
In this section we want to discuss global properties of the  flow and prove the existence (and the properties) of an attracting region for the orbits, which contains the Aubry set $\cAt_{L,\l}$ as the unique invariant set  in its ``frontier''. \\

We consider the following function
$${F_\l}(x,p)= \l \u(x) + H(x,p)$$
and the following disjoint subsets of $T^*M$:
\begin{eqnarray*}
\cZ^0_{F_\l} &:=& \{(x,p)\in T^*M: \; {F_\l}(x,p)=0\}\\
\cZ^+_{F_\l} &:=& \{(x,p)\in T^*M: \; {F_\l}(x,p)>0\}\\
\cZ^-_{F_\l} &:=& \{(x,p)\in T^*M: \; {F_\l}(x,p)<0\}.\\
\end{eqnarray*}
\begin{remark}
These three sets form a partition. Moreover, $\cZ^0_{F_\l}$ is compact and $\cZ^\pm_{F_\l}$ are open. 
It follows from the superlinearity of $H$ that  $\cZ^+_{F_\l}$ is unbounded, while $\cZ^-_{F_\l}$ is bounded.
\end{remark}
\noindent We are going to use the  these sets  to study the global dynamics of the system. \\
\noindent To do so, let us investigate the variation of ${F_\l}$ in the direction of the flow.
Recall that $\u$ is only locally Lipschitz continuous, hence, it is differentiable almost everywhere. Let us denote this measure zero set of non-differentiability by
\begin{eqnarray*}
\cN &:=&  \{x \in M: \u\;\mbox{is not differentiable at}\; x\}.
\end{eqnarray*}
Observe that the problem of being non-differentiable for ${F_\l}$ comes only from the $\u$ component; hence, ${F_\l}$ is differentiable at $(x,p)$ if and only if $x\not \in \cN$ (which is also a measure zero set in $T^*M$).\\
Let us start by observing how the Hamiltonian varies along the orbits. Using the equation of motion (\ref{eqmotion}) and the Legendre-Fenchel (in)equality (\ref{LegFenineq}), we obtain:
\begin{eqnarray} \label{derivHamiltonian}
\frac{d}{dt}H(x(t),p(t)) &=& \frac{\partial H}{\partial x} (x(t),p(t)) \cdot \dot{x}(t) + 
\frac{\partial H}{\partial p} (x(t),p(t)) \cdot \dot{p}(t) \nonumber\\
&=& - \l \, \big \langle  p(t), \frac{\partial H}{\partial p} (x(t),p(t))\big  \rangle \\
&=& - \l \, \big[L( \cL_L^{-1}(x(t), {p}(t))) + H(x(t), p(t)) \big] \nonumber \\
&=& - \l \, \big[L(x(t), \dot{x}(t))  + H( \cL_L(x(t), \dot{x}(t))) \big]. \nonumber
\end{eqnarray}
We use it to prove
\begin{lemma} \label{derivF}
For every $(x,p) \in T^*M$ such that $x\not \in \cN$,    $ \langle d{F_\l}(x,p), X_H(x,p) \rangle \leq -\l {F_\l}(x,p)$.
\end{lemma}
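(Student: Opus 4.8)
The plan is to compute $\langle dF_\l(x,p), X_H(x,p)\rangle$ directly and compare it with $-\l F_\l(x,p)$. Since $F_\l(x,p) = \l\u(x) + H(x,p)$ and we are assuming $x\notin\cN$ (so $\u$ is differentiable at $x$ and hence $F_\l$ is differentiable at $(x,p)$), we have $dF_\l(x,p) = \l\, d\u(x) + dH(x,p)$. Contracting with the vector field $X_H = X_{H,\l}$ whose components are given by the equations of motion \eqref{eqmotion}, namely $\dot x = \partial H/\partial p$ and $\dot p = -\partial H/\partial x - \l p$, I would split the computation into the $\u$-part and the $H$-part.

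For the $H$-part, I can invoke the computation already carried out in \eqref{derivHamiltonian}, which gives
$$
\langle dH(x,p), X_H(x,p)\rangle = \frac{d}{dt}H(x(t),p(t)) = -\l\big[L(x,v) + H(x,p)\big],
$$
where $(x,v) = \cL_L^{-1}(x,p)$, i.e. $v = \partial H/\partial p(x,p)$. For the $\u$-part, $\langle \l\, d\u(x), X_H(x,p)\rangle = \l\langle d\u(x), \dot x\rangle = \l\langle d\u(x), v\rangle$. Adding these,
$$
\langle dF_\l(x,p), X_H(x,p)\rangle = \l\langle d\u(x), v\rangle - \l L(x,v) - \l H(x,p).
$$
Now I use the Legendre-Fenchel inequality \eqref{LegFenineq} applied at $(x,v)$ and the covector $d\u(x)$: $\langle d\u(x), v\rangle \leq L(x,v) + H(x, d\u(x))$, so that $\l\langle d\u(x), v\rangle - \l L(x,v) \leq \l H(x, d\u(x))$. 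Substituting,
$$
\langle dF_\l(x,p), X_H(x,p)\rangle \leq \l H(x, d\u(x)) - \l H(x,p).
$$
Finally, since $\u$ is a solution of the discounted Hamilton-Jacobi equation at every point of differentiability (Proposition \ref{theoremonu} together with Rademacher, as recalled just before Section \ref{sec:aubry}), and $x\notin\cN$, we have $\l\u(x) + H(x,d\u(x)) = 0$, i.e. $\l H(x,d\u(x)) = -\l^2\u(x)$. Therefore
$$
\langle dF_\l(x,p), X_H(x,p)\rangle \leq -\l^2\u(x) - \l H(x,p) = -\l\big(\l\u(x) + H(x,p)\big) = -\l F_\l(x,p),
$$
which is exactly the claimed inequality.

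I do not expect any serious obstacle here: the only subtlety is bookkeeping about where $\u$ (hence $F_\l$) is differentiable, which is precisely handled by the hypothesis $x\notin\cN$, and making sure to use \eqref{derivHamiltonian} rather than recomputing $dH(X_H)$ from scratch. The one conceptual input is the Legendre-Fenchel inequality, which produces the inequality (as opposed to an equality) and which becomes an equality exactly when $p = d\u(x)$, i.e. on the graph of $d\u$ — a remark worth keeping in mind for later use, since it signals that $F_\l$ is strictly decreasing along the flow off the Aubry set. I would present the argument as a short displayed chain of (in)equalities rather than prose.
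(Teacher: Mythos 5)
Your proof is correct and follows essentially the same route as the paper: the same decomposition of $dF_\l$, the same use of \eqref{derivHamiltonian}, the Legendre--Fenchel inequality with the covector $d\u(x)$, and the fact that $\l\u(x)+H(x,d\u(x))=0$ at points of differentiability (which, for precision, follows from Propositions \ref{propsubsol}, \ref{propdiffcal1} and item (3) of Proposition \ref{theoremonu}, rather than from the a.e.\ statement alone). No gaps.
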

\begin{proof}
Let $x\not \in \cN$, $p\in T^*_xM$ and
denote  by
$(x,v)=\cL_{L}^{-1}(x,p)$. Using (\ref{derivHamiltonian}) and the Legendre-Fenchel inequality, we have:
\begin{eqnarray*}
\langle d{F_\l}(x,p), X_H(x,p) \rangle &=& \l \langle d\u(x), v \rangle +  \frac{d}{dt}H(\Phi_{H,\l}^t(x,p))_{\big|t=0} \\
&=& \l \langle d\u(x), v \rangle  - \l [L(x,v) + H(x,p)]\\
&\leq& \l \left[
L(x,v) + H(x,d\u(x)) - L(x,v) - H(x,p)
\right]\\
&=&
\l \left[ H(x,d\u(x)) - H(x,p)\right]\\
&=&
\l \left[ -\l \u(x) - H(x,p)\right]\\
&=& - \l {F_\l}(x,p),
\end{eqnarray*}
where we used that $\l \u(x) + H(x,d\u (x))=0$ at points of differentiability of $\u$.
\end{proof}
\noindent We can now start by studying the set $\cZ^0_{F_\l}$.

\begin{lemma}\label{lemma2}
We have that $ \cL_{L}(\cSt_{L,\l}) \subseteq \cZ^0_{F_\l}$; in particular,  $ \cL_{L}(\cAt_{L,\l}) \subseteq \cZ^0_{F_\l}$.  
\end{lemma}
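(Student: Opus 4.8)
The goal is to show $\cL_L(\cSt_{L,\l}) \subseteq \cZ^0_{F_\l}$, i.e.\ that for every $(x,v)$ whose projected orbit $\g_{(x,v)}$ is $(\u,L)$-calibrated on $(-\infty,0]$, the point $(x,p) := \cL_L(x,v)$ satisfies $\l \u(x) + H(x,p) = 0$. The natural approach is to combine the differentiability results for calibrated curves (Propositions~\ref{propdiffcal1} and~\ref{propdiffcal2}) with the defining relation between $\u$ and calibration. First I would fix $(x,v) \in \cSt_{L,\l}$ and set $\g := \g_{(x,v)}$, a curve on $(-\infty,0]$ with $\g(0) = x$, $\dot{\g}(0) = v$, which is $(\u,L)$-calibrated. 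The subtlety is that $t_0 = 0$ is an endpoint of the interval of definition, and Proposition~\ref{propdiffcal2} guarantees differentiability of $\u$ at $\g(t)$ only for $t$ in the \emph{open} interval; so one cannot directly invoke it at $t_0 = 0$.

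\textbf{Main step.} To circumvent the endpoint issue, I would use backward-invariance (property s.3 of $\cSt_{L,\l}$): for any $t>0$, the point $\Phi_{L,\l}^{-t}(x,v)$ also lies in $\cSt_{L,\l}$ and its projected orbit is $(\u,L)$-calibrated on the \emph{larger} interval $(-\infty, t]$, in the interior of which the point $\g(0) = x$ now sits. Applying Proposition~\ref{propdiffcal2} to this extended calibrated curve at the interior time $0$ shows that $\u$ is differentiable at $x$, and then Proposition~\ref{propdiffcal1} gives simultaneously that
$$
d\u(x) = \frac{\partial L}{\partial v}(x, v) \qquad \text{and} \qquad \l \u(x) + H(x, d\u(x)) = 0.
$$
Since $(x,p) = \cL_L(x,v)$ means precisely $p = \frac{\partial L}{\partial v}(x,v) = d\u(x)$, the second identity reads $\l\u(x) + H(x,p) = 0$, i.e.\ $(x,p) \in \cZ^0_{F_\l}$. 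This proves $\cL_L(\cSt_{L,\l}) \subseteq \cZ^0_{F_\l}$.

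\textbf{Conclusion.} For the ``in particular'' clause, recall from~\eqref{defA} that $\cAt_{L,\l} = \bigcap_{t\ge 0} \Phi^{-t}_{L,\l}(\cSt_{L,\l}) \subseteq \cSt_{L,\l}$, so applying $\cL_L$ (monotone under inclusion) and the inclusion just proved yields $\cL_L(\cAt_{L,\l}) \subseteq \cL_L(\cSt_{L,\l}) \subseteq \cZ^0_{F_\l}$. The only genuine obstacle here is the endpoint/differentiability bookkeeping described above; everything else is a direct assembly of previously established facts. (One could alternatively run the argument using the calibration identity for $\g$ on small intervals $[-h,0]$ and pass to the limit, reproducing the computation in~\eqref{passaggio1}, but invoking backward-invariance to land $x$ in the interior of a calibration interval is cleaner and avoids re-deriving one-sided inequalities.)
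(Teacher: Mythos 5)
Your key step is flawed. Backward-invariance does not put $x$ in the interior of a calibration interval: the curve associated to $\Phi^{-t}_{L,\l}(x,v)$ is $\g_{\Phi^{-t}_{L,\l}(x,v)}(s)=\g_{(x,v)}(s-t)$, which is indeed $(\u,L)$-calibrated on $(-\infty,t]$, but along this reparametrized curve the point $x$ is reached at parameter $s=t$, i.e.\ again at the \emph{right endpoint}; what sits at the interior time $0$ is $\g_{(x,v)}(-t)$, not $x$. Reading the statement instead as ``the orbit of $(x,v)$ itself is calibrated on $(-\infty,t]$'' would require calibration for positive times, which is exactly the extra property characterizing the Aubry set (item a.3), not $\cSt_{L,\l}$. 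Moreover the intermediate conclusion you draw --- that $\u$ is differentiable at $x$ for every $(x,v)\in\cSt_{L,\l}$ --- is genuinely false: since $\pi(\cSt_{L,\l})=M$ (item s.1), in the dissipative pendulum of Example \ref{expendulum} there is a point of $\cSt_{L,\l}$ lying over $x=\pi$, where $\u$ is \emph{not} differentiable. So Propositions \ref{propdiffcal1} and \ref{propdiffcal2} cannot be invoked at $x$ itself, and as written the proof does not go through (the ``in particular'' part for $\cAt_{L,\l}$ is fine, and would even follow directly from a.3--a.4, but the main inclusion is not established).

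The argument can be repaired with one extra limiting step: for each $t>0$ apply Propositions \ref{propdiffcal2} and \ref{propdiffcal1} at the interior time $-t$ of the calibration interval $(-\infty,0]$, obtaining that $\u$ is differentiable at $\g_{(x,v)}(-t)$, that $d\u(\g_{(x,v)}(-t))=\frac{\partial L}{\partial v}(\g_{(x,v)}(-t),\dot\g_{(x,v)}(-t))$, and that $F_\l\bigl(\cL_L(\Phi^{-t}_{L,\l}(x,v))\bigr)=0$; then let $t\to 0^+$ and use continuity of $F_\l$, of $\cL_L$ and of the flow to conclude $F_\l(x,p)=0$. Note that the statement of the lemma only involves $p=\frac{\partial L}{\partial v}(x,v)$, which makes sense whether or not $\u$ is differentiable at $x$, so no differentiability at the endpoint is actually needed. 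This repaired route is genuinely different from the paper's proof, which bypasses differentiability of $\u$ altogether: there one uses the calibration identity $\l\u(x)=\l\int_{-\infty}^0 e^{\l s}L\,ds$ together with $\frac{d}{dt}H(x(t),p(t))=-\l\,(L+H)$ along orbits (formula \eqref{derivHamiltonian}) and the boundedness of $H$ on the backward orbit (from boundedness and backward-invariance of $\cSt_{L,\l}$) to integrate $\frac{d}{dt}\bigl(e^{\l t}H(x(t),p(t))\bigr)$ over $(-\infty,0]$ and conclude directly that $\l\u(x)=-H(x,p)$.
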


\begin{proof}
It is enough to prove the first statement, being the second a clear consequence.
Let $(x,p)\in  \cL_{L}(\cSt_{L,\l})$ and let $(x,v)=\cL_{L}^{-1}(x,p)$. We denote their respective orbits by $(x(t), p(t))=\Phi_{H,\l}^t(x,p)$  and $(x(t), v(t))=\Phi_{L,\l}^t(x,v)$ with $t\in \R$. Using the definition of $\cSt_{L,\l}$ in (\ref{defcSt}), property (\ref{derivHamiltonian}) and the fact that $\cSt_{L,\l}$ is bounded and backward-invariant (hence, the Hamiltonian is bounded along the backward orbit), we obtain:
\begin{eqnarray*}
\l \u(x) &=&  \l \int_{-\infty}^0 e^{\l t} L(x(t),v(t))\, dt \\
 &=&  \int_{-\infty}^0 e^{\l t} \left[  \l \, \big(L(x(t),v(t)) +  H(x(t),p(t)) \big)-\l H(x(t),(p(t)) \right]\,dt\\
&=& - \int_{-\infty}^0 e^{\l t} \left[
\frac{d}{dt}\Big(H(x(t),p(t) \Big)
+
\l H(x(t),p(t))
\right]\, dt\\
&=& - \int_{-\infty}^0 \frac{d}{dt}\left(e^{\l t}H(x(t),p(t) \right)\, dt\\
&=&- H(x,p). 
\end{eqnarray*}
Therefore, ${F_\l}(x,p)=0$.
\end{proof}

\vspace{10 pt}

\noindent A sort of converse of the previous lemma holds.\\

\begin{lemma}\label{propZzero}
Let $(x,p) \in \cZ^0_{F_\l}$. If 
\begin{equation}\label{ipotesi}
\lim_{t\rightarrow -\infty} e^{\l t} H(\Phi^t_{H,\l}(x,p)) = 0,
\end{equation}
 then
$(x,p)\in \cL_{L}(\cSt_{L,\l})$. \\
In particular, all invariant sets in $\cZ^0$ are contained in $\cL_{L}(\cAt_{L,\l})$.\\
\end{lemma}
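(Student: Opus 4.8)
The strategy is to run the computation of Lemma~\ref{lemma2} in reverse. Fix $(x,p)\in\cZ^0_{F_\l}$ satisfying the hypothesis \eqref{ipotesi}, and write $(x(t),p(t))=\Phi^t_{H,\l}(x,p)$ and $(x(t),v(t))=\Phi^t_{L,\l}(\cL_L^{-1}(x,p))$ for $t\le 0$; by Proposition~\ref{propELHam}, $v(t)=\dot x(t)$, so $x(t)=\g_{(x,v)}(t)$. The goal is to show that $\g_{(x,v)}$ is $(\u,L)$-calibrated on $(-\infty,0]$, i.e.
$$
\u(x) - e^{\l t}\u(x(t)) = \int_t^0 e^{\l s} L(x(s),v(s))\,ds \qquad \forall\, t\le 0,
$$
which is exactly the statement $(x,p)\in\cL_L(\cSt_{L,\l})$.

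\textbf{Key steps.} First I would observe that along \emph{any} orbit, identity \eqref{derivHamiltonian} gives $\frac{d}{ds}\bigl(e^{\l s}H(x(s),p(s))\bigr) = -\l e^{\l s}\bigl[L(x(s),v(s))+H(x(s),p(s))\bigr]+\l e^{\l s}H(x(s),p(s))$; more directly, \eqref{derivHamiltonian} together with the product rule yields $\frac{d}{ds}\bigl(e^{\l s}H(x(s),p(s))\bigr) = e^{\l s}\bigl(\frac{d}{ds}H + \l H\bigr) = -\l e^{\l s} L(x(s),v(s))$. Integrating this exact identity from $t$ to $0$ gives, with no calibration assumed,
$$
H(x,p) - e^{\l t} H(x(t),p(t)) = -\l \int_t^0 e^{\l s} L(x(s),v(s))\,ds.
$$
Now use $(x,p)\in\cZ^0_{F_\l}$, i.e. $H(x,p) = -\l\u(x)$, and more generally along the orbit one does \emph{not} know $H(x(t),p(t))=-\l\u(x(t))$ a priori — so instead I keep $H(x(t),p(t))$ as is and pass to the limit $t\to-\infty$ using \eqref{ipotesi}: this gives $\u(x) = \int_{-\infty}^0 e^{\l s}L(x(s),v(s))\,ds$, hence by the definition \eqref{defu} of $\u$ and the fact that this particular curve realizes the infimum, $\g_{(x,v)}$ achieves the infimum and is therefore $(\u,L)$-calibrated on $(-\infty,0]$ (cf. the Remark following Proposition~\ref{theoremonu} and item~(3) there). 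Wait — to conclude calibration from $\u(x)=\int_{-\infty}^0 e^{\l s}L\,ds$ one uses $\u\prec_\l L$: dominatedness gives $\u(x) - e^{\l t}\u(x(t))\le \int_t^0 e^{\l s}L\,ds$ for all $t$, and if the reverse inequality failed on some $[t,0]$ one could splice to beat the infimum defining $\u(x)$; so equality holds on every $[t,0]$, which is calibration. This shows $(x,p)\in\cL_L(\cSt_{L,\l})$.

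For the ``in particular'' clause: if $\cN$ is an invariant set contained in $\cZ^0_{F_\l}$ (meaning $\cN\subseteq T^*M$ invariant, to avoid clashing with the notation $\cN$ already used — call it $\Lambda$), then for $(x,p)\in\Lambda$ the whole backward orbit stays in $\Lambda\subseteq\cZ^0_{F_\l}$, so $H(\Phi^t_{H,\l}(x,p)) = -\l\u(x(t))$ is bounded (since $\u$ is continuous on the compact $M$), hence $e^{\l t}H(\Phi^t_{H,\l}(x,p))\to 0$ as $t\to-\infty$ and \eqref{ipotesi} holds automatically; the first part then gives $\Lambda\subseteq\cL_L(\cSt_{L,\l})$, and since $\Lambda$ is invariant, $\Lambda\subseteq\bigcap_{t\ge0}\Phi^{-t}_{L,\l}(\cL_L(\cSt_{L,\l})) = \cL_L(\cAt_{L,\l})$ by \eqref{defA} (transporting via $\cL_L$).

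\textbf{Main obstacle.} The delicate point is the justification of the limit $t\to-\infty$ and the passage from ``the orbit realizes the infimum $\u(x)$'' to ``the orbit is calibrated on every subinterval''. The limit needs the integrability of $e^{\l s}L(x(s),v(s))$ near $-\infty$, which follows because along a backward orbit with $H$ controlled (here $H(x(t),p(t))=-\l\u(x(t))$ is bounded) the momentum $p(t)$ stays bounded, hence $v(t)$ does, hence $L(x(t),v(t))$ is bounded and $e^{\l s}$ furnishes the decay; combined with \eqref{ipotesi} the boundary term vanishes in the limit. The calibration-on-subintervals step is a standard dominated-plus-minimizing splicing argument (as used implicitly around Proposition~\ref{theoremonu}), so the only real content is the exact differential identity $\frac{d}{ds}(e^{\l s}H) = -\l e^{\l s}L$ from \eqref{derivHamiltonian} and the bookkeeping of the limit.
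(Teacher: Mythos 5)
Your proof is correct and follows essentially the same route as the paper: integrate the identity $\frac{d}{ds}\big(e^{\l s}H(x(s),p(s))\big)=-\l e^{\l s}L(x(s),v(s))$ coming from \eqref{derivHamiltonian}, use \eqref{ipotesi} together with $F_\l(x,p)=0$ to obtain $\u(x)=\int_{-\infty}^0 e^{\l s}L\,ds$, hence calibration on $(-\infty,0]$ and membership in $\cL_L(\cSt_{L,\l})$, and then handle invariant sets via boundedness of $H$ along their orbits plus item a.2 after \eqref{defA}. One minor remark: the integrability worry in your last paragraph is unnecessary (and the relation $H(\Phi^t_{H,\l}(x,p))=-\l\u(x(t))$ invoked there is not known a priori in the first part), since $L$ is bounded below, so the existence of $\lim_{t\to-\infty}\int_t^0 e^{\l s}L\,ds$ already follows from the finite-interval identity and \eqref{ipotesi}.
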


\begin{proof}
Let $(x,p)\in \cZ^0_{F_\l}$, \ie $\l \u(x) + H(x,p) = 0$.
Let us denote $(x,v)=\cL_{L}^{-1}(x,p)$ and the respective orbits by $(x(t), p(t))=\Phi_{H,\l}^t(x,p)$  and $(x(t), v(t))=\Phi_{L,\l}^t(x,v)$ with $t\in \R$.\\
 We want to prove that $(x,v)\in \cSt_{L,\l}$. Using  hypothesis (\ref{ipotesi})
   and property (\ref{derivHamiltonian}), we can deduce the following estimate:
\begin{eqnarray*}
\l \u(x) &=& - H(x,p) = - \int_{-\infty}^0 \frac{d}{dt}\left(e^{\l t}H(x(t),p(t) \right)\, dt\\
&=& - \int_{-\infty}^0 e^{\l t} \left[\l H(x(t),p(t)) + 
\frac{d}{dt}\Big(H(x(t),p(t) \Big)
\right]\, dt\\
 &=& - \int_{-\infty}^0 e^{\l t} \left[\l H(x(t),p(t)  -\l  L(x(t),v(t)) -\l H(x(t),(p(t)) 
\right]\, dt \\
 &=& \l \int_{-\infty}^0 e^{\l t} L(x(t),v(t))\, dt.
\end{eqnarray*}
Hence, the orbit $(x(t), v(t))$ for $t\in (-\infty, 0]$ achieves the minimum in the definition of $\u$ and it is therefore
$(\u,L)$-calibrated on $ (-\infty, 0]$. It follows from the definition of $\cSt_{L,\l}$ in (\ref{defcSt})  that $(x,v)\in \cSt_{L,\l}$.\\ 
To prove the last part, let us assume that $\Lambda \subseteq \cZ^0_{F_\l}$ is an invariant set. Observe that  $\Lambda$ being bounded and invariant, then 
for each $(x,p)\in \Lambda$ we have that $H(\Phi^t_{H,\l}(x,p))$ is bounded for all $t$. In particular, it follows from the previous part that $(x,p)  \in \cL_{L}(\cSt_{L,\l})$ and therefore
$\Lambda \subseteq  \cL_{L}(\cSt_{L,\l})$. Since all invariant sets in $\cSt_{L,\l}$ are contained in $\cAt_{L,\l}$ (see item a.2 after (\ref{defA})), then we can conclude 
that $\Lambda \subseteq  \cL_{L}(\cAt_{L,\l})$.
\end{proof}

\noindent The function ${F_\l}$ is a sort of {\it Lyapunov function} for the system and it allows to deduce useful information on 
the global properties of the flow.  
Let us first recall some definitions. We denote by $\Omega_{\infty}(x,p)$ the {\it $\omega$-limit set} of $(x,p)$ defined as the set of points $(\bar x,\bar p)\in T^*M$ for which there exists a sequence $(t_k)$, $t_k\to +\infty$ as $k\to +\infty$ such that 
  \[
  \lim_{k\to +\infty}\Phi_{H,\l}^{t_k}(x,p)=(\bar x,\bar p)
  \]
  Similarly, if $E\subseteq T^*M$ we denote by $\Omega_{\infty}(E)$ the set of future accumulation points of orbits starting in $E$.\\

\begin{proposition} \label{propLyapunov}
For  every $(x,p)\in T^*M$ and every $t>0$
\begin{equation}\label{lyap}
{F_\l}(\Phi^t_{H,\l}(x,p)) \leq {F_\l}(x,p) e^{-\l t}.
\end{equation}
As a consequence, the set  $\cZ^0_{F_\l}\cup \cZ^-_{F_\l}$ is an attracting set. 
 In particular, it is forward invariant and
$$\Omega_{\infty}(T^*M)\subseteq \cZ^0_{F_\l}\cup \cZ^-_{F_\l},$$
i.e., the $\omega$-limit points of any orbit are contained in $\cZ^0_{F_\l}\cup \cZ^-_{F_\l}$.
\end{proposition}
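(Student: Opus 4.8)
The plan is to establish \eqref{lyap} first, then deduce the attracting-set statement from it. The inequality \eqref{lyap} is the integrated form of the differential inequality in Lemma \ref{derivF}; the only subtlety is that $\u$ is merely Lipschitz, so $F_\l$ is not differentiable on the measure-zero set $\{x\in\cN\}$, and one cannot naively apply the chain rule to $t\mapsto F_\l(\Phi^t_{H,\l}(x,p))$. To handle this, I would fix $(x,p)$ and consider the absolutely continuous function $\phi(t):=e^{\l t}F_\l(\Phi^t_{H,\l}(x,p))$. Using the explicit formula $F_\l(x,p)=\l\u(x)+H(x,p)$ together with the equation of motion \eqref{eqmotion} and the computation \eqref{derivHamiltonian} for $\frac{d}{dt}H(x(t),p(t))$, one gets, at every $t$ for which $x(t)\notin\cN$ (which is a full-measure set of $t$, since the orbit's $x$-projection, being a $C^1$ curve with $\dot x=\partial H/\partial p\neq 0$ wherever $v\neq 0$, meets the measure-zero set $\cN$ only on a measure-zero set of times — this point needs a short justification using that the flow is absolutely continuous and $\cN$ has measure zero),
$$
\phi'(t) = e^{\l t}\Big(\l F_\l(\Phi^t_{H,\l}(x,p)) + \langle dF_\l(\Phi^t_{H,\l}(x,p)), X_H(\Phi^t_{H,\l}(x,p))\rangle\Big) \leq 0
$$
by Lemma \ref{derivF}. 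Since $\phi$ is locally Lipschitz (hence absolutely continuous) and $\phi'\leq 0$ a.e., $\phi$ is non-increasing, so $\phi(t)\leq\phi(0)$, which is exactly \eqref{lyap}.

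The delicate point, and the main obstacle, is precisely justifying that $t\mapsto F_\l(\Phi^t_{H,\l}(x,p))$ (equivalently $t\mapsto \u(x(t))$) is absolutely continuous and that its derivative is given by the chain rule for a.e.\ $t$. The cleanest route: $\u$ is Lipschitz and $t\mapsto x(t)$ is $C^1$, so $t\mapsto\u(x(t))$ is Lipschitz, hence differentiable a.e.; and at any $t$ where both $\u$ is differentiable at $x(t)$ and $t\mapsto\u(x(t))$ is differentiable, the derivative equals $\langle d\u(x(t)),\dot x(t)\rangle$. The set of $t$ where $\u$ fails to be differentiable at $x(t)$ is $\{t: x(t)\in\cN\}$; one must argue this has measure zero. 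This can fail for a general Lipschitz curve through a null set, but here the curve solves an ODE with a $C^1$ (hence locally Lipschitz) right-hand side, so one can invoke that the orbit cannot spend positive time in $\cN$ unless it is ``trapped'' there — alternatively, and more robustly, one avoids the issue entirely by working with $\u$ as a viscosity subsolution: since $\u\prec_\l L$ and Proposition \ref{propsubsol} gives $\l\u+H(x,d\u)\le 0$ a.e., one has directly from the calibration/domination inequality \eqref{dominated} applied to the orbit segment $\g_{(x,v)}|_{[0,t]}$ (which need not be calibrated, only admissible) that $e^{\l t}\u(x(t))-\u(x) \le \int_0^t e^{\l s}L(x(s),\dot x(s))\,ds$, and combining this with \eqref{derivHamiltonian} integrated gives \eqref{lyap} without ever differentiating $\u$. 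I would adopt this viscosity-style argument to sidestep the regularity obstacle.

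Given \eqref{lyap}, the remaining claims are soft. For the attracting-set property: let $\cS:=\cZ^0_{F_\l}\cup\cZ^-_{F_\l}=\{F_\l\le 0\}$, which is compact (it is closed, and bounded because $\cZ^-_{F_\l}$ is bounded by superlinearity of $H$ while $\cZ^0_{F_\l}$ is compact). Forward invariance is immediate: if $F_\l(x,p)\le 0$ then $F_\l(\Phi^t_{H,\l}(x,p))\le F_\l(x,p)e^{-\l t}\le 0$ for all $t\ge 0$. For the global attracting property, take any $(x,p)\in T^*M$ and any open neighborhood $\cU\supseteq\cS$. If $F_\l(x,p)\le 0$ we are done. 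Otherwise $F_\l(x,p)>0$, and \eqref{lyap} forces $F_\l(\Phi^t_{H,\l}(x,p))\le F_\l(x,p)e^{-\l t}\to 0^+$. I would then argue: the orbit eventually enters $\cU$. Indeed if it did not, there would be a sequence $t_k\to+\infty$ with $\Phi^{t_k}_{H,\l}(x,p)\notin\cU$; but the set $\{(y,q): 0\le F_\l(y,q)\le F_\l(x,p)\}\setminus\cU$ is compact (bounded because on it $H(y,q)\le F_\l(x,p)-\l\u(y)$ is bounded, so superlinearity of $H$ confines $q$; closed by continuity of $F_\l$ and openness of $\cU$) and disjoint from $\cS=\{F_\l\le 0\}$, hence $F_\l\ge\d>0$ on it for some $\d$; this contradicts $F_\l(\Phi^{t_k}_{H,\l}(x,p))\to 0$. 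Once the orbit enters $\cU$ at some time $t_0$, does it stay? Not necessarily in $\cU$ itself, but: after shrinking, one can choose $\cU'$ with $\cS\subseteq\cU'\subseteq\overline{\cU'}\subseteq\cU$ open, apply the above to get entry into $\cU'$ at time $t_1$, and note that from any point of $\cU'$ with $F_\l> 0$ the value $F_\l$ keeps decreasing, while from points with $F_\l\le 0$ the orbit stays in $\cS\subseteq\cU'$; a standard argument (using that $\cU'$ can be taken to be a sublevel-type neighborhood $\{F_\l<\e\}$, which is forward invariant by \eqref{lyap}!) closes it. In fact the clean choice is $\cU_\e:=\{F_\l<\e\}$ for $\e>0$ small enough that $\cU_\e\subseteq\cU$ (possible since $\cS$ is compact, contained in the open $\cU$, and $\{F_\l<\e\}\downarrow\cS$ as $\e\downarrow 0$ — this last point needs $\bigcap_{\e>0}\{F_\l<\e\}=\{F_\l\le 0\}=\cS$, true); then $\cU_\e$ is forward invariant by \eqref{lyap}, and every orbit enters it by the compactness argument above, so $\Phi^t_{H,\l}(x,p)\in\cU_\e\subseteq\cU$ for all large $t$. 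This proves $\cS$ is a global attracting set, and the inclusion $\Omega_\infty(T^*M)\subseteq\cS$ is then the definitional consequence recalled in the introduction.
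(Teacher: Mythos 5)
Your proof (the one you actually adopt, the ``viscosity-style'' route) is correct, but it establishes \eqref{lyap} by a genuinely different argument than the paper. The paper keeps the infinitesimal point of view: it takes the almost-everywhere differential inequality of Lemma \ref{derivF} and upgrades it to the pointwise estimate \eqref{lyap} by a contradiction-and-integration argument in the spirit of \cite[Lemma 1.5]{ABC} — assuming \eqref{lyap} fails at some point, the defect is integrated over a small ball in $T^*M$, and Tonelli/Fubini together with the chain rule for Lipschitz maps show that the same integral is nonpositive, the key being that for each fixed time the integrand is nonpositive for almost every initial condition in the ball (the flow maps the volume-null bad set to a null set), so one never has to control how long a \emph{single} orbit spends over $\cN$. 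You instead avoid differentiating $\u$ along an orbit altogether: applying the domination inequality \eqref{dominated} (item (2) of Proposition \ref{theoremonu}) to the projected orbit segment on $[0,t]$, and combining it with the integrated form of \eqref{derivHamiltonian}, namely $\l\int_0^t e^{\l s}L(x(s),\dot x(s))\,ds = H(x,p)-e^{\l t}H(\Phi^t_{H,\l}(x,p))$, gives exactly $e^{\l t}{F_\l}(\Phi^t_{H,\l}(x,p))\le {F_\l}(x,p)$; this is more elementary, sidesteps the measure-theoretic subtlety entirely, and is in the same spirit as the computations in Lemmas \ref{lemma2} and \ref{propZzero}. Your instinct to abandon the first route was right for a stronger reason than the one you give: the claim that an orbit meets $\cN$ only on a null set of times is false in general — in Example \ref{expendulum} the equilibrium $P_2=(\pi,0)$ lies over the unique non-differentiability point of $\u$, so that orbit sits over $\cN$ for all time — hence Route 1 cannot be repaired by an ODE-regularity argument, which is precisely why the paper resorts to the integration-in-phase-space trick. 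Finally, your deduction of the attracting-set statement via the forward-invariant sublevel neighborhoods $\{{F_\l}<\e\}$ is correct and a bit more detailed than the paper, which records only the forward invariance and the $\omega$-limit inclusion as immediate consequences of \eqref{lyap} and the continuity of ${F_\l}$ (your intermediate compactness digression with the sequence $t_k$ is superfluous once the sublevel-set argument is in place).
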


\begin{proof}
It is sufficient to prove (\ref{lyap}). The forward-invariance of $\cZ^0_{F_\l}\cup \cZ^-_{F_\l}$, in fact, follows immediately from  (\ref{lyap}); moreover, 
since ${F_\l}$ is continuous, {(\ref{lyap}) implies that} $\Omega_{\infty}(T^*M) \subseteq {F_\l}^{-1}((-\infty,0])=\cZ^0_{F_\l}\cup \cZ^-_{F_\l}$.\\
If ${F_\l}$ was differentiable everywhere, then in order to prove (\ref{lyap}) it would be sufficient to use Lemma \ref{derivF}; however, ${F_\l}$ is a-priori only locally Lipschitz, so that inequality  holds almost  everywhere. This issue can be solved by means of a standard argument (for example, see  also \cite[Lemma 1.5 ({\it i})]{ABC}).\\
Suppose that (\ref{lyap}) does not hold: this means that there exist $(x_0,p_0)\in T^*M$ and $t>0$ such that
$$
{F_\l}(\Phi^t_{H,\l}(x_0,p_0)) - {F_\l}(x_0,p_0) e^{-\l t} =: \delta > 0.
$$ 
Since both ${F_\l}$ and $\Phi^t_{H,\l}$ are locally Lipschitz, then we can find $r > 0$ and $C>0$ such that
\begin{equation*}
{F_\l}(\Phi^t_{H,\l}(x,p)) - {F_\l}(x,p) e^{-\l t} \geq  \delta - C \,d((x,p), (x_0,p_0)) \qquad \forall\; (x,p)\in B_r(x_0,p_0),
\end{equation*}
where $d$ denotes the  distance function on $T^*M$ induced by the Riemannian metric and $B_r(x_0,p_0)$ is the corresponding ball of radius $r$ centered at $(x_0,p_0)$. 
By integrating this inequality on a ball of radius $\rho\leq r$ we obtain ($\vol$ denotes the Riemannian volume on $T^*M$):
\begin{eqnarray*}
&& \int_{B_\r(x_0,p_0)} \left[
{F_\l}(\Phi^t_{H,\l}(x,p)) - {F_\l}(x,p) e^{-\l t} 
\right] \,d\,\vol (x,p) \\
&& \quad 
\geq \ 
\delta \, \vol (B_\r(x_0,p_0)) - 
\int_{B_\r(x_0,p_0)} 
\!\!\!\!\!\!\!\! C \,d((x,p), (x_0,p_0)) \;d\,\vol (x,p) \\
&& \quad 
\geq \ 
(\delta - C\,\r) \, \vol (B_\r(x_0,p_0)). 
\end{eqnarray*}
Therefore, if $0<\rho < \frac{\delta}{C}$ we have
\begin{equation} \label{integralispositive}
\int_{B_\r(x_0,p_0)} \left[
{F_\l}(\Phi^t_{H,\l}(x,p)) - {F_\l}(x,p) e^{-\l t} 
\right] \,d\,\vol (x,p) \;>\;0.
\end{equation}

On the other hand, using Tonelli's Theorem, the fact that the function $s\longmapsto {F_\l}(\Phi^s_{H,\l}(x,p))$ is locally Lipschitz continuous (hence, differentiable almost everywhere), and  the chain rule for Lipschitz continuous maps, we obtain:

{\footnotesize
\begin{eqnarray*}
&& \int_{B_\r(x_0,p_0)} \left[
{F_\l}(\Phi^t_{H,\l}(x,p)) - {F_\l}(x,p) e^{-\l t} 
\right] \,d\,\vol (x,p) \\
&& \quad 
= \
e^{-\l t }\, \int_{B_\r(x_0,p_0)} \left[
e^{\l t} {F_\l}(\Phi^t_{H,\l}(x,p)) - {F_\l}(x,p) 
\right] \,d\,\vol (x,p) \\
&& \quad 
= \ 
e^{-\l t }\,
\int_{B_\r(x_0,p_0)}   \left[
\int_0^t \frac{d}{ds} \big( e^{\l s} {F_\l}(\Phi^s_{H,\l}(x,p))\big)\,ds
\right] \,d\,\vol (x,p) \\
&& \quad 
= \
e^{-\l t }\, 
\int_0^t   \left[
\int_{B_\r(x_0,p_0)} \frac{d}{ds} \Big( e^{\l s} {F_\l}(\Phi^s_{H,\l}(x,p))\Big)\,d\,\vol (x,p)
\right] \,ds \\
&& \quad 
= \ 
e^{-\l t }\,
\int_0^t   e^{\l s}\, \left[
\int_{B_\r(x_0,p_0)} \!\!\!\!\!
 \Big( \l  {F_\l}(\Phi^s_{H,\l}(x,p)) + \frac{d}{ds}\big(
{F_\l}(\Phi^s_{H,\l}(x,p))
\big)
 \Big)\,d\,\vol (x,p)
\right] \,ds \\
&&\quad
= \ 
e^{-\l t }\,
\int_0^t   e^{\l s}\, \left[
\int_{B_\r(x_0,p_0)} \!\!\!\!\!
 \Big( \l  {F_\l}(\Phi^s_{H,\l}(x,p)) +
 \langle d{F_\l}, X_H \rangle_{\big| (\Phi^s_{H,\l}(x,p))}
 \Big)\,d\,\vol (x,p)
\right] \,ds \\
&& \quad 
\leq 0 
\end{eqnarray*}
}
where the last step follows from  the fact that, in the light of Lemma \ref{derivF}, the integrand is non positive  almost everywhere.
This conclusion is in contradiction with (\ref{integralispositive}). 
\end{proof}

\vspace{10 pt}

\begin{corollary}\label{cor1}
{\rm (1)} If  $\Omega_{\infty}(x,p) \subseteq \cZ_{F_\l}^0$, then $\Omega_{\infty}(x,p) \subseteq \cL_L(\cAt_{L,\l})$.\\
{\rm (2)} If there exists $t_0$ such that $\Phi_{H,\l}^t(x,p) \in \cZ^+_{F_\l} \cup \cZ^0_{F_\l}$ for all $t\geq t_0$, then $\Omega_{\infty}(x,p) \subseteq \cL_L(\cAt_{L,\l})$.\\
{\rm (3)} If there exists a sequence $\{t_n\}_{n\geq 0}$ such that $t_n\rightarrow +\infty$ and $\Phi_{H,\l}^{t_n}(x,p) \in \cZ^+_{F_\l} \cup \cZ^0_{F_\l}$ for all $n\geq 0$, then $\Omega_{\infty}(x,p) \subseteq \cL_L(\cAt_{L,\l})$.\\
{\rm (4)} If ${F_\l}(x,p)\geq 0$ for all $(x,p)\in T^*M$, then $\Omega_{\infty}(T^*M) \subseteq \cL_L(\cAt_{L,\l})$. In particular,  $\cAt_{L,\l}$ is a global attractor.\\
\end{corollary}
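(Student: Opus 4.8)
The plan is to read off all four items from Proposition \ref{propLyapunov} and Lemma \ref{propZzero}, using three elementary observations. First, for every $(x,p)\in T^*M$ the set $\Omega_{\infty}(x,p)$ is closed and invariant, and by Proposition \ref{propLyapunov} it is contained in $\cZ^0_{F_\l}\cup\cZ^-_{F_\l}={F_\l}^{-1}((-\infty,0])$; the latter is closed and bounded, and since $M$ is compact it is therefore compact, so $\Omega_{\infty}(x,p)$ is compact. Second, the last assertion of Lemma \ref{propZzero} states that every bounded invariant subset of $\cZ^0_{F_\l}$ is contained in $\cL_L(\cAt_{L,\l})$. Third, the estimate \eqref{lyap} together with the flow property upgrades to the fact that, for each $(x,p)$, the scalar function $g(t):=e^{\l t}\,{F_\l}(\Phi^t_{H,\l}(x,p))$ is non-increasing on $[0,+\infty)$: indeed, applying \eqref{lyap} at the base point $\Phi^s_{H,\l}(x,p)$ over the time $t-s$ gives ${F_\l}(\Phi^t_{H,\l}(x,p))\le e^{-\l(t-s)}{F_\l}(\Phi^s_{H,\l}(x,p))$ for $t\ge s\ge 0$, that is $g(t)\le g(s)$.

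With these in hand, item (1) is immediate: $\Omega_{\infty}(x,p)$ is a compact invariant subset of $\cZ^0_{F_\l}$, so the second observation applies. For item (2), Proposition \ref{propLyapunov} gives $\Omega_{\infty}(x,p)\subseteq\cZ^0_{F_\l}\cup\cZ^-_{F_\l}$, while the forward orbit $\{\Phi^t_{H,\l}(x,p):t\ge t_0\}$ lies in the closed set ${F_\l}^{-1}([0,+\infty))=\cZ^0_{F_\l}\cup\cZ^+_{F_\l}$, hence so does $\Omega_{\infty}(x,p)$; since $\cZ^0_{F_\l},\cZ^+_{F_\l},\cZ^-_{F_\l}$ are pairwise disjoint, intersecting the two inclusions forces $\Omega_{\infty}(x,p)\subseteq\cZ^0_{F_\l}$, so that (1) applies. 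For item (3), the third observation says $g$ is non-increasing, whereas the hypothesis gives $g(t_n)=e^{\l t_n}{F_\l}(\Phi^{t_n}_{H,\l}(x,p))\ge 0$; for any fixed $t\ge 0$, choosing $n$ with $t_n\ge t$ and using monotonicity yields $g(t)\ge g(t_n)\ge 0$, so ${F_\l}\ge 0$ along the whole forward orbit. This reduces (3) to (2) with $t_0=0$.

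For item (4), if ${F_\l}\ge 0$ everywhere then every orbit stays in $\cZ^0_{F_\l}\cup\cZ^+_{F_\l}$, so applying (2) with $t_0=0$ to each $(x,p)$ gives $\Omega_{\infty}(T^*M)\subseteq\cL_L(\cAt_{L,\l})$. To upgrade this to the assertion that $\cL_L(\cAt_{L,\l})$ is a global attractor, observe that \eqref{lyap} forces ${F_\l}(\Phi^t_{H,\l}(x,p))\to 0$ as $t\to+\infty$; writing ${F_\l}=\l\u+H$ with $\u$ bounded on the compact manifold $M$, this makes $H$ bounded along the forward orbit, so by superlinearity of $H$ the forward orbit is bounded, hence precompact. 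Therefore $\Omega_{\infty}(x,p)$ is non-empty and attracts the orbit (every neighbourhood of $\Omega_{\infty}(x,p)$ eventually contains it); since $\Omega_{\infty}(x,p)\subseteq\cL_L(\cAt_{L,\l})$, a fortiori every neighbourhood of $\cL_L(\cAt_{L,\l})$ eventually contains the orbit, and since $\cL_L(\cAt_{L,\l})$ is invariant (item (a.2) after \eqref{defA}) it is a global attractor.

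I expect no serious obstacle: the proof is essentially a careful combination of Proposition \ref{propLyapunov} and Lemma \ref{propZzero} with the monotonicity of $g$. The one point that needs genuine care is the final passage in item (4) — from ``every $\omega$-limit set lies in $\cL_L(\cAt_{L,\l})$'' to ``$\cL_L(\cAt_{L,\l})$ is a global attractor'' — which requires the precompactness of forward orbits; this is where the superlinearity of $H$ and the decay ${F_\l}(\Phi^t_{H,\l}(x,p))\to0$ are used.
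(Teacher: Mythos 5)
Your proof is correct and follows the same overall strategy as the paper: items (2)--(4) are reduced to item (1), which in turn is the last assertion of Lemma \ref{propZzero} applied to the invariant set $\Omega_{\infty}(x,p)$, with Proposition \ref{propLyapunov} supplying the Lyapunov estimate. Where you diverge is in making explicit two points the paper leaves terse. In (3) the paper only records that $F_\l(\Phi^{t_n}_{H,\l}(x,p))\to 0$ along the given sequence and then asserts $\Omega_{\infty}(x,p)\subseteq\cZ^0_{F_\l}$; as written this bounds $F_\l$ from below only along the subsequence $t_n$, not along the whole forward orbit. Your observation that $t\mapsto e^{\l t}F_\l(\Phi^t_{H,\l}(x,p))$ is non-increasing (a direct consequence of \eqref{lyap} applied at the base point $\Phi^s_{H,\l}(x,p)$) propagates the nonnegativity from the sequence $\{t_n\}$ to the entire forward tail and reduces (3) honestly to (2); this is the cleanest way to close that step. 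In (4) the paper says only that the claim ``follows easily from (2)''; you correctly identify that passing from ``every $\omega$-limit set lies in $\cL_L(\cAt_{L,\l})$'' to ``$\cL_L(\cAt_{L,\l})$ is a global attracting set'' requires precompactness of forward orbits, and your derivation of this from $F_\l(\Phi^t_{H,\l}(x,p))\to 0$, the boundedness of $\u$ on the compact manifold $M$, and the superlinearity of $H$ in the fibers is exactly the right supplement. I see no gaps.
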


\begin{proof}
(1)  If $\Omega_{\infty}(x,p) \subseteq \cZ^0_{F_\l}$, then using  Lemma \ref{propZzero} and the fact that $\Omega_{\infty}(T^*M)$ is invariant, we can deduce that 
 $\Omega_{\infty}(x,p) \subseteq \cL_L(\cSt_{L,\l})$. In particular, we have proved (see item a.2 after (\ref{defA})) that all invariant sets in $\cL_L(\cSt_{L,\l})$ must be contained in $\cL_L(\cAt_{L,\l})$ and this completes the proof.\\
\noindent (2) If follows from the fact that $\Phi_{H,\l}^t(x,p) \in \cZ^+_{F_\l} \cup \cZ^0_{F_\l}$  for all $t\geq t_0$,  from  Proposition \ref{propLyapunov} and from the continuity of ${F_\l}$, that 
$$
0\leq {F_\l}(\Phi_{H,\l}^t(x,p)) \leq {F_\l}(\Phi_{H,\l}^{t_0}(x,p))\, e^{-\l (t-t_0)} \qquad \forall\; t\geq t_0,
$$
and therefore  $\Omega_{\infty}(x,p) \subseteq \cZ^0_{F_\l}$. The conclusion follows from part (1).\\
\noindent (3) Proceeding as in (2), we obtain
$$
0\leq {F_\l}(\Phi_{H,\l}^{t_n}(x,p)) \leq {F_\l}(\Phi_{H,\l}^{t_n}(x,p))\, e^{-\l (t_n-t_0)} \stackrel{n\rightarrow +\infty}{\longrightarrow} 0
$$
and therefore  $\Omega_{\infty}(x,p) \subseteq \cZ^0_{F_\l}$. The conclusion follows again from part (1).\\
\noindent (4) It follows easily from (2).
\end{proof}

\vspace{10 pt}

\noindent We are now ready to define the set:
\begin{equation}\label{defK}
\cK_{H,\l} := \bigcap_{t\geq 0} \Phi_{H,\l}^t (\cZ^0_{F_\l}\cup \cZ^-_{F_\l}).
\end{equation}
and prove the following proposition.\\

\begin{proposition}\label{Kattractor}
The set $\cK_{H,\l}$ is the maximal global attractor for $X_{H,\l}$ and
$$
\cL_{L}(\cAt_{L,\l})\subseteq\cK_{H,\l}.  
$$
In particular, $
\cL_{L}(\cAt_{L,\l})= \cK_{H,\l} \cap {\cZ^0_{F_\l}}.  
$
\end{proposition}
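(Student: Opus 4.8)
The plan is to prove three things: that $\cK_{H,\l}$ is a global attractor, that it is maximal, and the two displayed inclusions/identities. I will treat these in order.

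\textbf{Step 1: $\cK_{H,\l}$ is an attractor.} First observe that $\cZ^0_{F_\l}\cup\cZ^-_{F_\l}={F_\l}^{-1}((-\infty,0])$ is compact (since $\cZ^-_{F_\l}$ is bounded and $\cZ^0_{F_\l}$ is compact) and, by Proposition \ref{propLyapunov}, forward-invariant. Hence $\Phi^t_{H,\l}(\cZ^0_{F_\l}\cup\cZ^-_{F_\l})$ is a decreasing (in $t$) family of nonempty compact sets, so $\cK_{H,\l}$ is nonempty, compact, and invariant (the invariance is the usual computation, exactly as in item a.2 after \eqref{defA}, using forward-invariance of the set we are intersecting). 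To see it is a \emph{global} attractor, let $\mathcal U\supset\cK_{H,\l}$ be open; I claim there is $T$ with $\Phi^T_{H,\l}(\cZ^0_{F_\l}\cup\cZ^-_{F_\l})\subset\mathcal U$ (otherwise, by a standard compactness/nested-intersection argument, the complements $\Phi^t_{H,\l}(\cZ^0_{F_\l}\cup\cZ^-_{F_\l})\setminus\mathcal U$ would be a decreasing family of nonempty compacts with empty intersection). Since $\cZ^0_{F_\l}\cup\cZ^-_{F_\l}$ is a global attracting set by Proposition \ref{propLyapunov}, every orbit enters and remains in it after some time $t_0(x,p)$; then for $t\geq t_0+T$ we have $\Phi^t_{H,\l}(x,p)\in\Phi^T_{H,\l}(\cZ^0_{F_\l}\cup\cZ^-_{F_\l})\subset\mathcal U$. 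Thus $\cK_{H,\l}$ is a global attracting set, and being invariant, a global attractor.

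\textbf{Step 2: maximality.} Let $\mathcal K'$ be any global attractor. Being invariant and contained in the $\omega$-limit set of $T^*M$, by Proposition \ref{propLyapunov} we get $\mathcal K'\subseteq\Omega_\infty(T^*M)\subseteq\cZ^0_{F_\l}\cup\cZ^-_{F_\l}$. By invariance, $\mathcal K'=\Phi^t_{H,\l}(\mathcal K')\subseteq\Phi^t_{H,\l}(\cZ^0_{F_\l}\cup\cZ^-_{F_\l})$ for all $t\geq0$, hence $\mathcal K'\subseteq\cK_{H,\l}$. So $\cK_{H,\l}$ is not properly contained in any attractor, i.e.\ it is maximal.

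\textbf{Step 3: the inclusions.} For $\cL_L(\cAt_{L,\l})\subseteq\cK_{H,\l}$: by Lemma \ref{lemma2}, $\cL_L(\cAt_{L,\l})\subseteq\cZ^0_{F_\l}\subseteq\cZ^0_{F_\l}\cup\cZ^-_{F_\l}$, and $\cL_L(\cAt_{L,\l})$ is invariant (it is the Legendre image of the invariant set $\cAt_{L,\l}$, using that $\cL_L$ conjugates $\Phi_{L,\l}$ and $\Phi_{H,\l}$), so arguing as in Step 2 it lies in every $\Phi^t_{H,\l}(\cZ^0_{F_\l}\cup\cZ^-_{F_\l})$, hence in $\cK_{H,\l}$. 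For the final identity $\cL_L(\cAt_{L,\l})=\cK_{H,\l}\cap\cZ^0_{F_\l}$: the inclusion ``$\subseteq$'' is immediate from the above plus Lemma \ref{lemma2}. For ``$\supseteq$'', let $(x,p)\in\cK_{H,\l}\cap\cZ^0_{F_\l}$. Since $(x,p)\in\cK_{H,\l}$, its full backward orbit stays in $\cK_{H,\l}$, which is compact, so $H(\Phi^t_{H,\l}(x,p))$ is bounded for $t\leq0$; in particular $e^{\l t}H(\Phi^t_{H,\l}(x,p))\to0$ as $t\to-\infty$, which is hypothesis \eqref{ipotesi} of Lemma \ref{propZzero}. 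That lemma then gives $(x,p)\in\cL_L(\cSt_{L,\l})$. Now the orbit of $(x,p)$ is entirely contained in $\cK_{H,\l}\cap\cZ^0_{F_\l}$ — indeed it stays in $\cK_{H,\l}$ by invariance, and it stays in $\cZ^0_{F_\l}$ because the same boundedness argument applies at every point of the orbit, so each point satisfies \eqref{ipotesi} and thus lies in $\cL_L(\cSt_{L,\l})\subseteq\cZ^0_{F_\l}$ by Lemma \ref{lemma2} — hence the orbit through $(x,p)$ is an invariant subset of $\cL_L(\cSt_{L,\l})$, and by item a.2 after \eqref{defA} every invariant subset of $\cSt_{L,\l}$ lies in $\cAt_{L,\l}$; therefore $(x,p)\in\cL_L(\cAt_{L,\l})$.

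The main obstacle is Step 3's ``$\supseteq$'': one must verify hypothesis \eqref{ipotesi} of Lemma \ref{propZzero} not just at $(x,p)$ but along the whole orbit, so as to place the \emph{entire} orbit inside $\cL_L(\cSt_{L,\l})$ and then invoke the characterization of invariant subsets of $\cSt_{L,\l}$. The key enabling fact — that the backward orbit is precompact and hence the Hamiltonian bounded along it — comes precisely from having $(x,p)\in\cK_{H,\l}$, which is why the identity involves $\cK_{H,\l}$ rather than an arbitrary point of $\cZ^0_{F_\l}$. The attractor and maximality parts are comparatively routine, relying on the Lyapunov estimate \eqref{lyap} and standard nested-compact-sets arguments.
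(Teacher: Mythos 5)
Your Steps 1--2 and the inclusion $\cL_L(\cAt_{L,\l})\subseteq\cK_{H,\l}$ follow essentially the paper's route, but the argument for the reverse inclusion in Step 3 has a genuine gap, and it is not a repairable one. Lemma \ref{propZzero} has \emph{two} hypotheses: the point must lie in $\cZ^0_{F_\l}$ \emph{and} satisfy \eqref{ipotesi}; condition \eqref{ipotesi} alone does not place a point in $\cL_L(\cSt_{L,\l})$ (the proof of that lemma starts from $\l\u(x)=-H(x,p)$). So when you argue that each forward image $\Phi^s_{H,\l}(x,p)$, $s>0$, ``satisfies \eqref{ipotesi} and thus lies in $\cL_L(\cSt_{L,\l})\subseteq\cZ^0_{F_\l}$'', you are using membership in $\cZ^0_{F_\l}$ to prove membership in $\cZ^0_{F_\l}$: the argument is circular. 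Moreover the missing step is actually false in general. Along any Hamiltonian orbit one has $e^{\l t}\u(\g(t))-\u(\g(0))-\int_0^t e^{\l s}L(\g,\dot\g)\,ds=\frac1\l\big(e^{\l t}{F_\l}(\Phi^t_{H,\l}(x,p))-{F_\l}(x,p)\big)$, so for $(x,p)\in\cZ^0_{F_\l}$ forward calibration on $[0,t]$ is \emph{equivalent} to $\Phi^t_{H,\l}(x,p)\in\cZ^0_{F_\l}$; and in Example \ref{expendulum} the points $(x,d\u(x))$ with $x\in(0,\pi)$ lie on $W^u(P_1)\subseteq\cK_{H,\l}$ and in $\cZ^0_{F_\l}$, yet their forward orbits spiral into $P_2\in\cZ^-_{F_\l}$, so they do not belong to $\cL_L(\cAt_{L,\l})=\{P_1\}$. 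In fact one always has $\cL_L(\cSt_{L,\l})\subseteq\cK_{H,\l}\cap\cZ^0_{F_\l}$ (backward invariance of $\cSt_{L,\l}$ plus Lemma \ref{lemma2}), so the displayed equality cannot hold whenever $\cSt_{L,\l}$ is strictly larger than $\cAt_{L,\l}$. To be fair, the paper's own proof of this identity is the same one-line appeal to Lemma \ref{propZzero} ``and the invariance of $\cK_{H,\l}$'', and it too only yields $(x,p)\in\cL_L(\cSt_{L,\l})$; the statement that is actually provable (and is the one recorded in the Main Theorem, item (ii,d)) is that $\cL_L(\cAt_{L,\l})$ is the maximal compact \emph{invariant} subset of $\cZ^0_{F_\l}$, equivalently of $\cK_{H,\l}\cap\cZ^0_{F_\l}$; this follows from Lemma \ref{lemma2} together with the last assertion of Lemma \ref{propZzero}, exactly along the lines you sketch, once the word ``invariant'' is inserted.

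Two smaller slips. In Step 1 you say every orbit ``enters and remains in'' $\cZ^0_{F_\l}\cup\cZ^-_{F_\l}$ after finite time; \eqref{lyap} only guarantees entry into every \emph{neighborhood}, e.g.\ into each sublevel $\{{F_\l}\le\e\}$, since ${F_\l}$ can remain positive along an orbit. The paper's fix is to observe that forward orbits stay in the compact set $\{{F_\l}\le\max({F_\l}(x,p),0)\}$, hence are precompact, and that their $\omega$-limit sets are invariant and contained in $\cZ^0_{F_\l}\cup\cZ^-_{F_\l}$, hence in $\cK_{H,\l}$. In Step 2 the claim that a global attractor is contained in $\Omega_{\infty}(T^*M)$ is false: in Example \ref{expendulum}, $\cK_{H,\l}$ contains $W^u$ while $\Omega_{\infty}(T^*\T)=P_1\cup P_2$. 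What is true, and what the paper uses for maximality, is that every compact invariant set lies in $\{{F_\l}\le0\}$: apply \eqref{lyap} along the bounded backward orbit of each of its points and let the time go to $+\infty$. With that substitution your maximality argument goes through.
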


\begin{proof}
First of all, it follows from the definition that $\cK_{H,\l}$ is compact. Moreover, using an argument similar to the one in Section \ref{sec:aubry}, item a.2, we can conclude that it is invariant; in fact if $s<0$ then  
\begin{eqnarray*}
\Phi_{H, \l}^s(\cK_{H,\l}) = \bigcap_{t\geq s} \Phi_{H,\l}^t (\cZ^0_{F_\l}\cup \cZ^-_{F_\l}) \subseteq \bigcap_{t\geq 0} \Phi_{H,\l}^t (\cZ^0_{F_\l}\cup \cZ^-_{F_\l}) = \cK_{H,\l},
\end{eqnarray*}

while if $s>0$ (since  $\cZ^0_{F_\l}\cup \cZ^-_{F_\l}$ is forward-invariant, see Proposition \ref{propLyapunov}):
\begin{eqnarray*}
\Phi_{H, \l}^s(\cK_{H,\l}) &=& \bigcap_{t\geq 0} \Phi_{H,\l}^{t+s} (\cZ^0_{F_\l}\cup \cZ^-_{F_\l}) \\
&=& \bigcap_{t\geq 0} \Phi_{H,\l}^{t}( \Phi_{H,\l}^{s}(\cZ^0_{F_\l}\cup \cZ^-_{F_\l})) \\
&\subseteq& \bigcap_{t\geq 0} \Phi_{H,\l}^t (\cZ^0_{F_\l}\cup \cZ^-_{F_\l}) \subseteq \cK_{H,\l}.
\end{eqnarray*}

Moreover, it contains the Aubry set as a consequence of Lemma \ref{lemma2}, so it is not empty. 
We prove that $\cL_{L}(\cAt_{L,\l})= \cK_{H,\l} \cap {\cZ^0_{F_\l}}$. In fact, clearly $\cL_{L}(\cAt_{L,\l}) \subseteq {\cZ^0_{F_\l}}$ (see Lemma \ref{lemma2}).
On the other hand, if  $(x,p) \in \cK_{H,\l} \cap {\cZ^0_{F_\l}}$, then it follows from Lemma \ref{propZzero} and the invariance of $\cK_{H,\l}$ that $(x,p) \in \cL_{L}(\cAt_{L,\l})$.\\
In order to prove that $\cK_{H,\l}$ is a global attractor, we need to prove that it is a global attracting set. 
Recall that Proposition \ref{propLyapunov} implies that
$$
\Omega_{\infty}(T^*M) \subseteq \cZ^0_{F_\l}\cup \cZ^-_{F_\l}.
$$
Moreover, it follows  from (\ref{defK}) and the fact that $ \Omega_{\infty}(T^*M)$ is invariant ({\it i.e.}, $\Phi_{H,\l}^t(\Omega_{\infty}(T^*M)) = \Omega_{\infty}(T^*M)$ for every $t$), that
\begin{eqnarray*}
\Omega_{\infty}(T^*M) &=&  \bigcap_{t\geq 0} \Phi_{H,\l}^t(\Omega_{\infty}(T^*M))
\subseteq  \bigcap_{t\geq 0} \Phi_{H,\l}^t(\cZ^0_{F_\l}\cup \cZ^-_{F_\l}) = \cK_{H,\l}.
\end{eqnarray*}
Therefore, using the definition of attracting set, it is easy to conclude that $\cK_{H,\l}$ is an attracting set and hence, being invariant, a global attractor.
Maximality  follows from the facts that all compact invariant sets for $\Phi_{H,\l}$ must be contained in $ \cZ^0_{F_\l}\cup \cZ^-_{F_\l}$, and that
because of its definition \eqref{defK}, $\cK_{H,\l}$ is the maximal invariant set in $\cZ^0_{F_\l}\cup \cZ^-_{F_\l}$.
\end{proof}

\vspace{10 pt}


\section{Action-minimizing measures and Mather set}\label{sec:mather}
In order to define an analogue of the Mather set in the conformally symplectic case, we need first to generalize  the notion of {\it Mather measure} or {\it action-minimizing measure} (we refer to \cite{Mane,Mather91} for the conservative case).
Let us denote by $\fM_{L,\l}$ the set of Borel probability measures on $TM$ that are invariant under $\Phi_{L,\l}$ (\ie ${(\Phi^t_{L,\l}})_* \m=\m$ for every $t\in \R$) and such that
\begin{equation}\label{boundedmomentum}
\int_{TM} \|v\| d\mu <+\infty.
\end{equation}

\noindent  Hereafter, we shall consider this set endowed with the topology given by $\lim_{n\rightarrow +\infty} \m_n = \mu$ if and only if
$$\lim_{n\rightarrow +\infty} \int_{TM} f(x,v) d\mu_n= \int_{TM} f(x,v) d\mu $$
for all $f \in C_{\ell}(TM)$, \ie functions $f:T M \longrightarrow \R$ having at most linear growth:
$$
\sup_{(x,v)\in T M} \frac{|f(x,v)|}{1+\|v\|} <+\infty\,.
$$
$\fM_{L,\l}$ can be seen as a subset of the dual space $(C_{\ell})^*$. This topology is also called {\it vague topology} and it is well-known that it is metrizable. \\

\begin{remark}\label{existencemeasures}
The set $\fM_{L,\l}=\fM_{L,\l}(L)$ is non-empty. In fact, since the set $\cAt_{L,\l}$ is compact and invariant under $\Phi_{L,\l}$, then it follows from Krylov--Bogolyubov's theorem (see, for example, \cite[Sec. 2]{Mather91}) that there exists at least an invariant (Borel) probability  measure $\mu$, which clearly satisfies condition (\ref{boundedmomentum}) since it is supported on a compact set. Alternatively, one can construct invariant probability measures in the following way. For every $x\in M$, consider the minimizing $(\u,L)$-calibrated orbit $\g_x: (-\infty,0]\longrightarrow M$ such that $\g_x(0)=x$. If one considers the probability measure $\mu_T$ evenly distributed on ${\g_x}_{|[0,T]}$, then every limit point of the family $\{\mu_T\}_{T>0}$, as $T$ goes to $+\infty$, is an invariant probability measure for $\Phi_{L,\l}$ and it follows from item (4) in Proposition \ref{theoremonu} that condition (\ref{boundedmomentum}) holds; it turns out that it is supported on $\cAt_{L,\l}$.\\
\end{remark}

\noindent We can prove the following property  of invariant probability measures.  In order to simplify notation, we shall denote by $L+H: TM \longrightarrow \R$ the function 
$(L+H)(x,v)= L(x,v)  + H( \cL_L(x,v))$.

\begin{proposition}\label{integralLplusH}
Let $\m\in \fM_{L,\l}$; then, 
$$
\int_{TM} (L+H)(x,v) \, d\mu(x,v) = 0.\\
$$
\end{proposition}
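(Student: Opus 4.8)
The plan is to exploit the explicit formula \eqref{derivHamiltonian} for the derivative of the Hamiltonian along the flow, which in the Lagrangian variables reads
$$
\frac{d}{dt}H(\Phi^t_{H,\l}(x,p)) = -\l\,(L+H)(\Phi^t_{L,\l}(x,v)),
$$
where $(x,v)=\cL_L^{-1}(x,p)$ and $(L+H)(x,v) := L(x,v) + H(\cL_L(x,v))$. Thus the function $(L+H)$ is, up to the factor $-\l$, the time-derivative along the flow of the function $H\circ \cL_L : TM \to \R$. The idea is that the integral of a flow-derivative of a function against an invariant measure vanishes, provided one has enough integrability to justify it.

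First I would fix $\m \in \fM_{L,\l}$ and consider the function $g := H\circ \cL_L : TM \to \R$. By the identity above, $\frac{d}{dt}\big(g(\Phi^t_{L,\l}(x,v))\big)\big|_{t=0} = -\l\,(L+H)(x,v)$ for every $(x,v)$. If $g$ were bounded (or at least $\m$-integrable together with the flow-derivative), then by invariance of $\m$, for every $t$,
$$
\int_{TM} g(\Phi^t_{L,\l}(x,v))\,d\m = \int_{TM} g(x,v)\,d\m,
$$
so differentiating in $t$ at $t=0$ (justified by dominated convergence) gives $\int_{TM}\frac{d}{dt}\big(g\circ\Phi^t_{L,\l}\big)\big|_{t=0}\,d\m = 0$, i.e. $-\l\int_{TM}(L+H)\,d\m = 0$, and since $\l>0$ the claim follows. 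To make this rigorous I would instead integrate first: for fixed $T>0$, invariance gives $\int g\circ\Phi^T_{L,\l}\,d\m = \int g\,d\m$, while the fundamental theorem of calculus along each orbit gives $g(\Phi^T_{L,\l}(x,v)) - g(x,v) = -\l\int_0^T (L+H)(\Phi^s_{L,\l}(x,v))\,ds$; integrating this in $(x,v)$ over $TM$ against $d\m$ and using Fubini (plus stationarity of $\m$, so $\int (L+H)\circ\Phi^s_{L,\l}\,d\m = \int (L+H)\,d\m$ for each $s$) yields $0 = -\l T \int_{TM}(L+H)\,d\m$, hence the result.

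The main obstacle is the integrability bookkeeping needed to apply Fubini. The measure $\m$ only satisfies $\int_{TM}\|v\|\,d\m < \infty$, so I must check that $(L+H)$ and $g = H\circ\cL_L$ (and their orbital integrals) are $\m$-integrable. For $(L+H)$: since $L$ has at most, say, superlinear-but-locally-controlled growth and $H\circ\cL_L(x,v) = \langle \partial L/\partial v(x,v), v\rangle_x - L(x,v)$ by the Legendre-Fenchel equality, one has $(L+H)(x,v) = \langle \partial_v L(x,v), v\rangle_x$; using the $C^2$ bounds on $L$ on the (compact base) manifold one gets $|(L+H)(x,v)| \le C(1+\|v\|^2)$ in general, which is \emph{not} obviously $\m$-integrable from $\int\|v\|\,d\m<\infty$ alone. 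Here I would use that $\m$, being $\Phi_{L,\l}$-invariant and supported where the dynamics is recurrent, is in fact supported on a bounded subset of $TM$ — indeed one expects $\supp\m \subseteq \cL_L^{-1}(\cZ^0_{F_\l}\cup\cZ^-_{F_\l})$ or, more strongly, that invariant measures are carried by the (bounded) maximal attractor region, so that $\|v\|$ is essentially bounded on $\supp\m$; this should be argued from Proposition \ref{propLyapunov} (the $\omega$-limit set lies in the bounded region $\cZ^0_{F_\l}\cup\cZ^-_{F_\l}$) together with invariance of $\m$, which forces $\supp\m$ to be a compact invariant set. Once boundedness of $\supp\m$ is in hand, all the functions involved are bounded on $\supp\m$, Fubini applies trivially, and the computation above closes. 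The remaining steps — the fundamental theorem of calculus along $C^1$ orbits, and stationarity $\int f\circ\Phi^s_{L,\l}\,d\m = \int f\,d\m$ — are routine given the $C^2$ regularity of $L$ established earlier.
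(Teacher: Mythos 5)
Your proof is correct and hinges on the same key identity as the paper's, namely \eqref{derivHamiltonian}, which exhibits $L+H$ as $-\l^{-1}$ times the derivative along the Lagrangian flow of the function $H\circ\cL_L$; but you close the argument by a genuinely different (and slightly more elementary) route. The paper first notes that $\supp\m$ is compact, reduces to ergodic $\m$, and applies the Birkhoff ergodic theorem: the time average of $L+H$ along a generic orbit equals $-\l^{-1}\lim_{T\to+\infty}\frac{1}{T}\bigl(H(\cL_L(x(T),\dot x(T)))-H(\cL_L(x(0),\dot x(0)))\bigr)=0$, since $H\circ\cL_L$ is bounded on $\supp\m$. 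You instead integrate the fundamental-theorem-of-calculus identity over a fixed time $T$ against $\m$ and use invariance plus Fubini to get $0=-\l T\int_{TM}(L+H)\,d\m$; this dispenses with both the ergodic decomposition and the ergodic theorem, at the cost of a small amount of integrability bookkeeping, which you handle by reducing to compactness of $\supp\m$. That compactness is the one place where you are vaguer than necessary: invariance of $\m$ alone does not force compact support, and the clean justification (implicit in the paper as well) is Poincar\'e recurrence — $\m$-a.e.\ point lies in its own $\omega$-limit set, hence, by Proposition \ref{propLyapunov}, in the compact set $\cL_L^{-1}(\cZ^0_{F_\l}\cup\cZ^-_{F_\l})$, and since this set is closed it contains $\supp\m$. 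With that point made precise, your Fubini step is indeed trivial and the proof is complete.
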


\begin{proof}
Let us start noting that $\supp \mu$ is compact, since it is contained in $\Omega_\infty(T M) \subseteq \cL_L^{-1}(\cK_{H,\l})$. To prove the result is sufficient to consider  the case in which $\mu$ is ergodic. Then, using the ergodic theorem and \eqref{derivHamiltonian}, for a generic point $(x,v)\in \supp \mu$: 
\begin{eqnarray*}
\int_{TM} (L+H)(x,v) \, d\mu(x,v) &=&  \lim_{T\rightarrow +\infty}  \frac{1}{T} \int_0^T (L+H)(x(t),\dot{x}(t)) \, dt \\
&=& -\l^{-1}\lim_{T\rightarrow +\infty}  \frac{1}{T} \int_0^T \frac{d}{dt} H(\cL_L(x(t),\dot{x}(t))) \, dt \\
&=& -\l^{-1} \lim_{T\rightarrow +\infty}  \frac{H(\cL_L(x(T), p(T))) - H(\cL_L(x(0),p(0)))}{T}  \\
&=& 0
\end{eqnarray*}
where, in the last equality, we used that $H \circ \cL_L$, being continuous, is bounded on $\supp \m$.
\end{proof}

\vspace{10 pt}

\begin{remark}
In particular, if $\m\in \fM_{L,\l}$,  then $\int_{TM} L \, d\mu  = - \int_{TM} H\circ \cL_L \, d\mu$. Hence, the averaged action coincides with the averaged energy, as it happens in the conservative case: in that case the energy is constant along the orbit and its value coincides with the {\it minimal averaged action} (also called {\it Mather's $\alpha$ function} or {\it  Ma\~n\'e critical value}; see, for example, \cite{Mather91, Fathibook, Sorrentinobook}.\\
\end{remark}

From Remark \ref{existencemeasures} we have that there exist some $\mu \in \fM_{L,\l}$ that are supported in $\cAt_{L,\l}$. We would like to characterize all of them. Let us start with the following observation. Consider the function $\u: M\longrightarrow \R$ defined in (\ref{defu}) and let $\nu \in \fM_{L,\l}$. Since $\nu$ is an invariant measure, then ${(\Phi^t_{L,\l})}_* \nu = {(\Phi^t_{L,\l})}^* \nu=\nu$ for all $t\in \R$. Moreover, using the definition of $u_\l$ and Fubini Theorem,  we obtain:
\begin{eqnarray} \label{computeminimalaction}
\int_{TM}  \l \, \u(x) \,d\nu(x,v) &\leq& \l \int_{TM} \left( 
\int_{-\infty}^0 e^{\l s} L( \Phi^s_{L,\l}(x,v))\, ds
  \right)\,d\nu(x,v) \nonumber \\
  &=&
 \l \int_{-\infty}^0 e^{\l s}   \left( \int_{TM} L( \Phi^s_{L,\l}(x,v))\, d\nu(x,v)
  \right)\,ds  \nonumber \\
 &=&
 \l \int_{-\infty}^0 e^{\l s}   \left( \int_{TM} L( x,v)\, d (\Phi^s_{L,\l})^* \nu(x,v)
  \right)\,ds \nonumber \\
 &=&
 \l \int_{-\infty}^0 e^{\l s}   \left( \int_{TM} L( x,v)\, d \nu(x,v)
  \right)\,ds \nonumber \\
   &=&
 \l \left( \int_{TM} L( x,v)\, d \nu(x,v) \right) \cdot \left(\int_{-\infty}^0 e^{\l s}   ds \right)\nonumber \\
&=& \int_{TM} L( x,v)\, d \nu(x,v).
\end{eqnarray}

\vspace{10 pt}

\noindent The following characterization holds.

\begin{proposition}\label{propminimmeasure}
Let $\mu \in \fM_{L,\l}$. Then:
$$
\int_{TM} (L- \l \u)\, d\mu \geq 0.
$$
Moreover, 
$$
\int_{TM} (L-\l \u) \,d\m =0 \qquad \Longleftrightarrow \qquad \supp \mu \subseteq \cAt_{L,\l}.\\
$$
\end{proposition}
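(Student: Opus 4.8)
The plan is to read off both the inequality and the equality case directly from the chain \eqref{computeminimalaction}, noting that $\u$ enters there only through its first step, the pointwise inequality $\l\u(x)\le\l\int_{-\infty}^0 e^{\l s}L(\Phi^s_{L,\l}(x,v))\,ds$, which holds because $\u(x)$ is an infimum (see \eqref{defu}) and $s\mapsto\pi\Phi^s_{L,\l}(x,v)$ is an admissible curve ending at $x$. Hence \eqref{computeminimalaction} already gives $\int_{TM}(L-\l\u)\,d\mu\ge0$, and equality holds precisely when that first inequality is an equality $\mu$-almost everywhere. I will also use the following characterization of $\cSt_{L,\l}$: since $\u\prec_\l L$, one has $(x,v)\in\cSt_{L,\l}$ if and only if $\u(x)=\int_{-\infty}^0 e^{\l s}L(\Phi^s_{L,\l}(x,v))\,ds$ --- one implication is the observation recorded just before \eqref{defu}, and the other follows by splitting $(-\infty,0]=(-\infty,a]\cup[a,0]$, using domination on each subinterval, and letting $a\to-\infty$.

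For the implication $\supp\mu\subseteq\cAt_{L,\l}\Rightarrow\int_{TM}(L-\l\u)\,d\mu=0$, note that $\cAt_{L,\l}\subseteq\cSt_{L,\l}$ (take $t=0$ in \eqref{defA}), so by the characterization above the first inequality in \eqref{computeminimalaction} is an equality for every $(x,v)\in\supp\mu$; consequently all the terms in \eqref{computeminimalaction} coincide and $\int_{TM}\l\u\,d\mu=\int_{TM}L\,d\mu$, i.e.\ $\int_{TM}(L-\l\u)\,d\mu=0$. Conversely, assume $\int_{TM}(L-\l\u)\,d\mu=0$. The steps of \eqref{computeminimalaction} after the first do not involve $\u$, so $\int_{TM}L\,d\mu=\l\int_{TM}\big(\int_{-\infty}^0 e^{\l s}L(\Phi^s_{L,\l}(x,v))\,ds\big)\,d\mu(x,v)$; combined with $\int_{TM}L\,d\mu=\l\int_{TM}\u\,d\mu$ and dividing by $\l>0$, this yields
$$\int_{TM}\Big(\int_{-\infty}^0 e^{\l s}L(\Phi^s_{L,\l}(x,v))\,ds-\u(x)\Big)d\mu(x,v)=0.$$
Since the integrand is non-negative (infimum property of $\u$), it vanishes $\mu$-a.e.; by the characterization above this says exactly that $\mu(\cSt_{L,\l})=1$.

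The one step requiring care --- and the only real obstacle --- is upgrading $\mu(\cSt_{L,\l})=1$ to $\supp\mu\subseteq\cAt_{L,\l}$. Here I would use that $\cSt_{L,\l}$ is Borel (the characterization expresses it through the continuous function $\u$ and the lower-semicontinuous, by Fatou, map $(x,v)\mapsto\int_{-\infty}^0 e^{\l s}L(\Phi^s_{L,\l}(x,v))\,ds$), that $\mu$ is $\Phi_{L,\l}$-invariant, and that the family $\{\Phi^{-t}_{L,\l}(\cSt_{L,\l})\}_{t\ge0}$ is decreasing by backward-invariance of $\cSt_{L,\l}$ (item s.3). Then $\mu(\Phi^{-n}_{L,\l}(\cSt_{L,\l}))=\mu(\cSt_{L,\l})=1$ for all $n\in\N$, hence $\mu(\cAt_{L,\l})=\mu\big(\bigcap_{n\in\N}\Phi^{-n}_{L,\l}(\cSt_{L,\l})\big)=\lim_{n\to+\infty}\mu(\Phi^{-n}_{L,\l}(\cSt_{L,\l}))=1$; since $\cAt_{L,\l}$ is compact (item a.1), $\supp\mu\subseteq\cAt_{L,\l}$. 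Finally, the Fubini/Tonelli manipulations used above are legitimate for the same reasons as in the derivation of \eqref{computeminimalaction}, namely $\int_{TM}\|v\|\,d\mu<+\infty$ (condition \eqref{boundedmomentum}), which for measures in $\fM_{L,\l}$ in fact already follows from the compactness of $\supp\mu$.
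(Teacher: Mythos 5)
Your proposal is correct and follows essentially the same route as the paper: the inequality and the equality case are both read off from \eqref{computeminimalaction}, equality being equivalent to the $\mu$-a.e.\ identity $\u(x)=\int_{-\infty}^0 e^{\l s}L(\Phi^s_{L,\l}(x,v))\,ds$, which is in turn equivalent (via domination and the splitting argument you sketch) to membership in $\cSt_{L,\l}$. The only deviation is the final upgrade to $\supp\mu\subseteq\cAt_{L,\l}$: you use measurability of $\cSt_{L,\l}$, invariance of $\mu$ and continuity from above along the decreasing family $\Phi^{-n}_{L,\l}(\cSt_{L,\l})$, whereas the paper uses closedness of $\cSt_{L,\l}$ to get $\supp\mu\subseteq\cSt_{L,\l}$ and then the invariance of $\supp\mu$ together with \eqref{defA}; both arguments are valid (indeed your lower-semicontinuity observation even shows $\cSt_{L,\l}=\{G\le 0\}$ is closed, recovering the paper's step).
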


\begin{proof}
The fact that $\int_{TM} (L- \l \u)\, d\mu \geq 0$ follows from (\ref{computeminimalaction}). Hence, let us prove the second part.\\
If $\supp \m \subseteq \cAt_{L,\l}$, then for every $(x,v) \in \supp \mu$ we have that 
$\Phi^s_{L,\l}(x,v)= (\g_{x}(s), \dot{\g}_x(s))$ for all $s\in (-\infty, 0]$, where $\g_x$ is the curve achieving the infimum in the definition of $\u(x)$ (see item (3) in Proposition \ref{theoremonu}). Therefore, proceeding as in (\ref{computeminimalaction}) we get:
\begin{eqnarray*}
\int_{TM}  \l \, \u(x) \,d\mu(x,v) &=& \l \int_{TM} \left( 
\int_{-\infty}^0 e^{\l s} L( \Phi^s_{L,\l}(x,v))\, ds
  \right)\,d\mu(x,v) \nonumber \\
  &=& \ldots \ =\  \int_{TM} L( x,v)\, d \mu(x,v).
\end{eqnarray*}
Hence, $\int_{TM} (L-\l \u) \,d\m =0$.\\
On the other side, if $\int_{TM} (L-\l \u) \,d\m =0$, then it follows from (\ref{computeminimalaction}) that for $\mu$-almost every $(x,v) \in \supp \mu$ we have that 
$$
\u(x)= \int_{-\infty}^0 e^{\l s} L( \Phi^s_{L,\l}(x,v))\, ds.
$$
Hence, it follows that the orbit $\Phi^s_{L,\l}(x,v)$ is $(\u,L)$-calibrated on $(-\infty,0]$ (see item (3) in Proposition \ref{theoremonu}) and therefore $(x,v) \in \cSt_{L,\l}$. In particular, using the closedness of $\cSt_{L,\l}$, we can conclude that $\supp \mu \subseteq \cSt_{L,\l}$. Since $\mu$ is invariant, then for every $t\in \R$
$$\Phi_{L,\l}^{t}(\supp \mu) = \supp \mu \subset\Phi^{t}_{L,\l}(\cSt_{L,\l}).$$
Hence,  it follows from the definition of $\cAt_{L,\l}$ in (\ref{defA}) that $\supp \mu \subseteq \cAt_{L,\l}$.
\end{proof}

\vspace{10 pt}

\noindent This result justifies the following definition:\\

We say that a measure $\mu \in \fM_{L,\l}$ is a {\it minimizing measure} if $$\int_{TM} (L- \l \u)\, d\mu = 0.\\$$

\vspace{10 pt}

\begin{remark} (i) When $\l=0$, this definition coincides with the classical definition of Mather's measures (see \cite{Mather91, Sorrentinobook}).\\
(ii) If $\mu$ is a minimizing measure, then, using Proposition \ref{integralLplusH} and the fact that $\u$ is differentiable on $\cA_{L,\l}$, we obtain:
\begin{eqnarray*}
\int_{TM} (L- \l \u)\, d\mu = 0 = \int_{TM} (L+H) \, d\mu.
\end{eqnarray*}
Hence,
\begin{eqnarray*}
\int_{TM} ( \l \u +  H \circ \cL_L)\, d\mu = 0 
\end{eqnarray*}
or equivalently
\begin{eqnarray*}
\int_{TM} ( \l \u(x) +  H (x, d\u (x))\, d \pi_{*}\mu (x) = 0, 
\end{eqnarray*}
where $\pi: TM \longrightarrow M$ denotes the projection.\\
\end{remark}

\vspace{10 pt}

Let us define the following invariant set, which , in analogy with the conservative case,  will be called the {\it Mather set}:
\begin{equation}\label{defMather}
\cMt_{L,\l} := \overline{\bigcup\left\{
\supp \m: \; \m\; \mbox{is minimizing}\right\}  }.\\
\end{equation}

\vspace{10 pt}

\noindent In analogy with what done for the Aubry set in (\ref{defA}), we describe some properties of the Mather set:\\
\begin{itemize}
\item[m.1)] $\cMt_{L,\l}\neq \emptyset$, as it follows from Remark \ref{existencemeasures} and Proposition \ref{propminimmeasure}. Moreover, it follows from the definition that $
\cMt_{L,\l}\subseteq \cAt_{L,\l}$ (see also item (a.2) after the definition of $\cAt_{L,\l}$ in (\ref{defA})).

\item[m.2)] $\cMt_{L,\l}$ is clearly invariant, since it is the closure of the union of invariant objects.

\item[m.3)]  (Graph property) Since $\cMt_{L,\l} \subseteq \cAt_{L,\l}$, then the projection $\pi : \cMt_{L,\l}  \longrightarrow M$ such that  $\pi(x,v)=x$ is injective (see item (a.4) after the definition of $\cAt_{L,\l}$ in (\ref{defA})).
In particular, 
$\pi: \cMt_{L,\l} \longrightarrow \cM_{L,\l} $ is a bi-Lipschitz homeomorphism (where $\cM_{L,\l}:=\pi (\cMt_{L,\l})$) and
\begin{equation}
\cMt_{L,\l} = \left\{  \left(x, \frac{\partial H}{\partial p}(x, d\u (x)\right):\quad x\in \cM_{H,\l} \right\}.
\end{equation}
\end{itemize}

\vspace{10 pt}


\section{Limit  to the conservative case}\label{sec:conv}
In this section we would like to briefly discuss what happens in the limit as the dissipation $\l$ goes to zero.

Let us start with the following property whose proof follows, for example, from \cite{LPV}, \cite[Proposition 2.6]{DFIZ} and Remark \ref{remarkconstant} ({\it i}). We denote by $\a$(0) the value of Mather's $\a$-function at $0$ ( we refer for example to \cite{Mather91, Fathibook, Sorrentinobook} for more details).

\begin{proposition}\label{convergencetoalpha}
$\l \u$ converges uniformly to $-\a(0)$ as $\l \rightarrow 0^+$.
\end{proposition}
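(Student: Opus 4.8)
The plan is to identify the quantity $\l\u$ with a value function for a discounted control problem and to show that, as $\l\to0^+$, this value function converges to the ergodic constant, which in our setting is exactly $-\a(0)$. Recall from \eqref{defu} that
$$
-\l\u(x) = \sup_{\s}\left(-\l\int_{-\infty}^0 e^{\l s}L(\s(s),\dot\s(s))\,ds\right),
$$
the supremum over curves $\s:(-\infty,0]\to M$ with $\s(0)=x$. Since $\l\int_{-\infty}^0 e^{\l s}\,ds = 1$, the factor $\l e^{\l s}\,ds$ is a probability measure on $(-\infty,0]$, so $-\l\u(x)$ is a weighted average of $-L$ along admissible curves. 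This is precisely the form of a discounted Lagrangian value function whose vanishing-discount limit is classical.

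First I would invoke Proposition \ref{theoremonu}: the family $\{\l\u\}_{\l>0}$ is equi-Lipschitz (the Lipschitz constant depends only on $L$, not on $\l$), and it is equibounded because $M$ is compact and, using the constant curves together with superlinearity of $H$, one gets a uniform two-sided bound $-\max_x\min_v L(x,v)\le \l\u(x)\le C$. By Ascoli--Arzel\`a, any sequence $\l_n\to0^+$ has a subsequence along which $\l_{n_k}\u_{\l_{n_k}}$ converges uniformly to some Lipschitz function $w$. The next step is to show that every such limit $w$ is constant and equals $-\a(0)$: this is the standard vanishing-discount identification, which in the form we need is already recorded in the literature (\cite{LPV} for the periodic case, \cite[Proposition 2.6]{DFIZ} in the present generality). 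Concretely, passing to the limit in the discounted Hamilton--Jacobi equation $\l\u + H(x,d\u(x))=0$ (satisfied a.e., hence in the viscosity sense) forces $w$ to be a viscosity solution of $H(x,dw(x))=c$ with $c=-\lim\l\u$; stability of viscosity solutions under uniform convergence handles the PDE, while testing against Mather measures and calibrated curves for the conservative Lagrangian pins down $c=\a(0)$, i.e. $w\equiv-\a(0)$. Finally, since the limit is the same for every convergent subsequence, the whole family $\l\u$ converges uniformly to $-\a(0)$ as $\l\to0^+$, by the usual subsequence argument combined with the equicontinuity already established. Appealing to Remark \ref{remarkconstant}(i), the normalization of the additive constant plays no role here.

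The main obstacle, and the only genuinely substantive point, is the identification of the limit constant with $-\a(0)$ rather than merely showing existence of \emph{some} limit: one must rule out other critical values of the stationary Hamilton--Jacobi equation and confirm that the relevant one is Mather's $\a(0)$. This is exactly where one needs the selection/averaging argument: on one side, integrating $\l\u + H(x,d\u)=0$ against a Mather measure $\mu_0$ for $X_{H,0}$ and using $\int d\u\cdot v\,d\mu_0 = 0$ (closedness of $\mu_0$) gives $\l\int\u\,d\mu_0 = -\int H\,d(\cL_L)_*\mu_0 = \a(0)$ in the limit; on the other side, the dominated/subsolution inequality $\l\u + H(x,d\u)\le 0$ together with the duality formula for $\a(0)$ yields the matching bound. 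Since all of this is packaged in the cited references together with Proposition \ref{theoremonu} and Remark \ref{remarkconstant}(i), I would keep the argument short and refer to them, spelling out only the Ascoli--Arzel\`a extraction and the subsequence-uniqueness step.
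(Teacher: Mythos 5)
Your proposal is correct and takes essentially the same route as the paper, which likewise gives no self-contained argument but derives the statement directly from \cite{LPV}, \cite[Proposition 2.6]{DFIZ} and Remark \ref{remarkconstant} ({\it i}). Your supplementary sketch is the standard vanishing-discount argument; the only slip is in the PDE-stability step, where one should pass to the limit in the equation satisfied by a normalization such as $\u - \u(x_0)$ (the uniform limit of $\l\u$ is automatically constant since its Lipschitz constant is $\l$ times the $\l$-independent constant of Proposition \ref{theoremonu}), but this does not affect the citation-based proof.
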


\begin{remark}
It follows from this fact that the region $\cZ_{F_\l}^0\cup \cZ_{F_\l}^-= \{(x,p)\in T^*M :\: {F_\l}(x,p)\leq 0\}$ where the dynamics is attracted, in the limit as $\l \to 0^+$ converges to the energy sublevel $\{H(x,p)\leq \a(0)\}$.  In particular, $\cZ_{F_\l}^0$ converges to {\it Ma\~n\'e's critical energy level} for $H$
(see \cite{Fathibook, Sorrentinobook} and references therein).\\
\end{remark}

\noindent Let us now prove this convergence result.

\begin{proposition}\label{convMathmeas}
Let $\mu_\l$ be minimizing measures for $\l>0$ and assume that $\bar{\mu}$ is an accumulation point of these probability measures as $\l$ goes to $0$. 
Then, $\bar{\mu}$ is a Mather measure for the limit conservative system.
\end{proposition}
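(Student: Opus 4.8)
The plan is to show that $\bar{\mu}$ is invariant for the conservative flow $\Phi_{L,0}$, that it has bounded action (so it belongs to the domain of the conservative minimal-action functional), and that it minimizes $\int_{TM} L\, d\nu$ over all such measures, which is precisely the defining property of a Mather measure for $X_{H,0}$ (since $\alpha(0) = \min_{\nu} \int L\, d\nu$). First I would address invariance. The measures $\mu_\l$ are invariant under $\Phi_{L,\l}$, and by Proposition \ref{theoremonu}(4) their supports lie in a fixed compact set $K = \{(x,v):\|v\|\le A\}$ independent of $\l$; hence the family is tight, the accumulation point $\bar{\mu}$ is a genuine probability measure, and $\supp\bar{\mu}\subseteq K$. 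Since the vector fields $X_{L,\l}$ converge to $X_{L,0}$ in $C^1_{\rm loc}$ as $\l\to 0^+$ (compare \eqref{eqmotion} with $\l=0$), the flows $\Phi^t_{L,\l}$ converge to $\Phi^t_{L,0}$ uniformly on compact sets for each fixed $t$; passing to the limit in $\int f\circ \Phi^t_{L,\l}\, d\mu_\l = \int f\, d\mu_\l$ for $f\in C_c(TM)$ (or $f\in C_\ell(TM)$, using tightness) gives $(\Phi^t_{L,0})_*\bar{\mu} = \bar{\mu}$, so $\bar{\mu}\in\fM_{L,0}$.

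Next I would establish the minimality. By the definition of minimizing measure and Proposition \ref{convergencetoalpha}, $\int_{TM} L\, d\mu_\l = \int_{TM}\l\u\, d\mu_\l$, and since $\l\u \to -\alpha(0)$ uniformly while $\mu_\l\to\bar{\mu}$ weakly in the vague topology, the right-hand side converges to $-\alpha(0)$; on the left-hand side, $L$ restricted to the fixed compact set $K$ is continuous and bounded, so $\int L\, d\mu_\l \to \int L\, d\bar{\mu}$. Hence $\int_{TM} L\, d\bar{\mu} = -\alpha(0)$. It remains to recall (e.g.\ from \cite{Mather91, Sorrentinobook}) that for the conservative Tonelli system $-\alpha(0) = \min_{\nu\in\fM_{L,0}}\int_{TM} L\, d\nu$ and that the measures attaining this minimum are exactly the Mather measures of $X_{H,0}$; since $\bar{\mu}\in\fM_{L,0}$ attains it, $\bar{\mu}$ is a Mather measure, and $\cL_L(\supp\bar{\mu})\subseteq\cM^*_H$.

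The main obstacle I anticipate is the interchange of limits in the functional $\int L\, d\mu_\l$: one must be careful that no mass escapes to infinity in the fibers and that $\int\l\u\, d\mu_\l \to -\alpha(0)$ rather than merely $\liminf$-ing. Both issues are handled by the uniform momentum bound of Proposition \ref{theoremonu}(4), which confines all $\mu_\l$ to one compact set — this is exactly why that $\l$-independence was emphasized — so the convergence $\mu_\l\to\bar\mu$ can be tested against the continuous bounded functions $L|_K$ and $\u|_M$ without the subtleties of the vague topology on non-compact spaces. A minor secondary point is the convergence of the flows: it suffices to invoke continuous dependence of solutions of \eqref{eqmotion} on the parameter $\l$, uniformly on the compact set $K$ and on compact time intervals, which is standard ODE theory given that $H\in C^2$.
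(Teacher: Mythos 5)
Your argument is correct, but it diverges from the paper's proof at the key structural step. Both proofs pass to the limit in $0=\int_{TM}(L-\l_n\bar u_{\l_n})\,d\mu_{\l_n}$ exactly as you do, using Proposition \ref{convergencetoalpha} and the $\l$-independent momentum bound of Proposition \ref{theoremonu}(4), to obtain $\int_{TM}L\,d\bar\mu=-\a(0)$. The difference is how invariance of $\bar\mu$ under the conservative flow is obtained. You prove it directly: the uniform compact confinement of the supports plus continuous dependence of the solutions of \eqref{eqmotion} on $\l$ gives $\Phi^t_{L,\l}\to\Phi^t_{L,0}$ uniformly on compacts, so $(\Phi^t_{L,0})_*\bar\mu=\bar\mu$, and then you invoke the standard variational characterization of Mather measures among invariant measures. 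The paper instead sidesteps flow convergence entirely: it observes that $\bar\mu$, being a limit of invariant (hence closed/holonomic) measures, is a closed measure, and then cites Ma\~n\'e's result (\cite[Proposition 1.3]{Mane}, see also \cite[Theorem 31]{Bernard}) that a closed measure satisfying $\int L\,d\bar\mu=-\a(0)$ is automatically invariant and is a Mather measure. Your route is more elementary and self-contained (standard ODE continuous dependence plus Mather's characterization), at the cost of the extra flow-convergence argument; the paper's route is shorter and more robust, since it only needs the closedness of the limit measure and delegates invariance to Ma\~n\'e's theorem. One small caution in your write-up: the Euler--Lagrange vector field is only continuous when $L$ is merely $C^2$, so the claimed $C^1_{\rm loc}$ convergence of $X_{L,\l}$ is an overstatement; but this is harmless, since, as you note at the end, one can run the continuous-dependence argument on the Hamiltonian side, where $X_{H,\l}=X_{H,0}-\l\,p\,\partial_p$ is $C^1$, and transfer via the $\l$-independent Legendre diffeomorphism.
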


\begin{proof}
Let assume that $\mu_{\l_n}$ converge (in the weak$^*$ topology) to $\bar{\mu}$ ($\l_{n} \to 0^+$ as $n\rightarrow +\infty$). Then, it follows from the definition of minimizing measure, the convergence of these measures and 
Proposition \ref{convergencetoalpha}, that:
\begin{eqnarray*}
0&=& \int_{TM} (L- \l_n \bar{u}_{\l_n})\, d\mu_{\l_n} \stackrel{n\rightarrow +\infty}{\longrightarrow}
\int_{TM} L\, d\bar{\mu} + \a(0),
\end{eqnarray*}
which implies
\begin{equation}\label{minimcond}
\int_{TM} L\, d\bar{\mu} = - \a(0).
\end{equation}
Observe that $\bar{\mu}$ is a closed probability measure (since it is the limit of invariant, hence closed, probability measures). It has been proven in \cite[Proposition 1.3]{Mane} (see also \cite[Theorem 31]{Bernard} for the proof of the equivalence between the definition of closed measures and holonomic measures) that a closed measure which satisfies the minimality condition in (\ref{minimcond}) is invariant and it is a Mather measure.
\end{proof}

\vspace{10 pt}

If we denote by $\cMt_{L}$ the (conservative) Mather set associated to $H$ and $L$, then the following holds. 

\begin{corollary}\label{corolconvMat}
The limit of $\cMt_{L,\l}$ is contained in $\cMt_L$. More specifically, for every neighborhood $\cU \supset \cMt_L$, the sets $\cMt_{L,\l}$ are definitely contained in $\cU$ as $\l \rightarrow 0^+.$ \\
\end{corollary}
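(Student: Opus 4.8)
\emph{Proof proposal.}
The plan is to argue by contradiction, exploiting that $\cMt_{L,\l}$ is — by \eqref{defMather} — the closure of the union of the supports of minimizing measures, the uniform momentum bound of Proposition \ref{theoremonu}(4), and the convergence statement of Proposition \ref{convMathmeas}.

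First I would set up the contradiction. Suppose the conclusion fails: there are an open set $\cU\supseteq\cMt_L$ and a sequence $\l_n\to 0^+$ with $\cMt_{L,\l_n}\not\subseteq\cU$. Fix a compact neighbourhood $\cV$ with $\cMt_L\subseteq\mathrm{int}\,\cV\subseteq\cV\subseteq\cU$ and set $2\epsilon:=d(\cMt_L,\,TM\setminus\mathrm{int}\,\cV)>0$, so that $TM\setminus\cV\subseteq\{(x,v):d((x,v),\cMt_L)\ge 2\epsilon\}$. Since, by \eqref{defMather}, $\cMt_{L,\l_n}$ is the closure of $\bigcup\{\supp\m:\m\in\fM_{L,\l_n}\ \text{minimizing}\}$, for each $n$ there is a minimizing $\m_n$ with $\supp\m_n\not\subseteq\cV$; as $TM\setminus\cV$ is open and meets $\supp\m_n$, we get $\m_n(TM\setminus\cV)>0$, hence $\m_n\bigl(\{d(\cdot,\cMt_L)\ge 2\epsilon\}\bigr)>0$.

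Next I would invoke compactness. By Proposition \ref{propminimmeasure}, $\supp\m_n\subseteq\cAt_{L,\l_n}$, and by the graph property established after \eqref{defA} the latter is the graph over $\cA_{L,\l_n}$ of $x\mapsto\bigl(x,\frac{\partial H}{\partial p}(x,d\u(x))\bigr)$, with $\|v\|$ bounded by the constant $A$ of Proposition \ref{theoremonu}(4), uniformly in $\l$. Hence all the $\m_n$ are supported in one fixed compact subset of $TM$, so $\{\m_n\}$ is precompact in the vague topology; passing to a subsequence, $\m_n\to\bar\m$ vaguely and $\m_n(\{d(\cdot,\cMt_L)\ge 2\epsilon\})\to\ell$ for some $\ell\ge 0$. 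By Proposition \ref{convMathmeas}, $\bar\m$ is a Mather measure for the conservative flow $X_{H,0}$, hence $\supp\bar\m\subseteq\cMt_L$ and $\bar\m(\{d(\cdot,\cMt_L)\ge 2\epsilon\})=0$; since that set is closed, the Portmanteau theorem gives $\ell\le\bar\m(\{d(\cdot,\cMt_L)\ge 2\epsilon\})=0$, so $\ell=0$.

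The genuine difficulty — and what I expect to be the main obstacle — is that $\ell=0$ is not yet a contradiction: a priori $\supp\m_n$ could develop a thin sliver escaping $\cV$ while carrying mass tending to $0$, which is invisible to the vague limit. To rule this out I would use more of the structure of the dissipative system. One may first replace $\m_n$ by an ergodic component (each ergodic component of a minimizing measure is again minimizing, since $\int_{TM}(L-\l\u)\,d\nu\ge 0$ for every $\nu\in\fM_{L,\l}$ by Proposition \ref{propminimmeasure}, so the components of a measure of zero average have zero average) still having support outside $\cV$; then, choosing $z_n\in\supp\m_n$ with $d(z_n,\cMt_L)\ge 2\epsilon$ and — by Poincar\'e recurrence — recurrent for $\Phi_{L,\l_n}$, whose orbit lies in $\cAt_{L,\l_n}$ and is $(\u,L)$-calibrated on $\R$ (item a.3 after \eqref{defA}), I would use the uniform Lipschitz bound for $\u$, the uniform velocity bound, the convergence result for calibrated orbits (item s.4 after \eqref{defcSt}, via \cite[Theorem 6.4]{DFIZ}) and Proposition \ref{convergencetoalpha} ($\l\u\to-\a(0)$) to pass to a limiting curve $\bar\g$ that is calibrated, in the conservative sense, by a weak KAM solution of the associated Hamilton--Jacobi equation (in the sense of \cite{Fathibook}); the point is then to show that such limiting behaviour cannot remain recurrent away from $\cMt_L$, which forces the mass near $\bar z:=\lim_n z_n\notin\cMt_L$ to stay bounded below and contradicts $\ell=0$. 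Once the contradiction is reached, $\cMt_{L,\l}$ is eventually contained in every neighbourhood of $\cMt_L$, which is the assertion of the corollary.
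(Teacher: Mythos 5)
Your first two paragraphs reconstruct what is, in effect, the paper's entire argument: the corollary is stated there with no proof at all, as an immediate consequence of Proposition \ref{convMathmeas}, so the reduction via the uniform compactness of the supports and vague convergence of minimizing measures to a conservative Mather measure is exactly the intended route. You have also correctly put your finger on the point this route leaves open: vague convergence controls $\limsup_n\m_n(C)$ for closed $C$ but gives no upper control on the supports, so a portion of $\supp\m_n$ carrying vanishing mass could escape every neighbourhood of $\cMt_L$. Up to that point your proposal is, if anything, more careful than the paper.

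The difficulty is that your proposed repair does not close this gap, so the proof is incomplete. Concretely: (i) recurrence of $z_n$ for $\Phi_{L,\l_n}$ does not pass to the limit, since the return times may diverge as $\l_n\to 0^+$, so $\bar z=\lim_n z_n$ need not be recurrent (or even non-wandering) for the conservative flow; (ii) what the limit of calibrated curves actually yields is that $\bar z$ lies on an orbit calibrated on all of $\R$ by the limiting weak KAM solution of \cite{DFIZ}, and the set of such points contains the conservative Aubry set but can be strictly larger than $\cMt_L$ --- for the pendulum the whole separatrix consists of bi-infinitely calibrated orbits while $\cMt_L$ is the single hyperbolic fixed point --- so no contradiction with $\bar z\notin\cU$ follows from calibration alone; (iii) the concluding claim that this ``forces the mass near $\bar z$ to stay bounded below'' is precisely the thin-sliver phenomenon you set out to exclude, restated as a conclusion without an argument. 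So the honest part of your proposal proves exactly what Proposition \ref{convMathmeas} proves, and the step from convergence of the measures to containment of the sets $\cMt_{L,\l}$ still needs a genuine additional mechanism (for instance, a lower bound on the mass that an ergodic minimizing measure must assign to a neighbourhood of any point of its support, uniform in $\l$, or a direct argument that the relevant limit points support conservative invariant measures of minimal action); neither your sketch nor the paper supplies one.
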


\begin{remark}\label{convhomol}
The following reasoning and the above results can be easily adapted to the case in which the cohomology class $\eta \in H^1(M;\R)$ is different from zero.
In particular, the limit to the conservative case implies that both the cohomology class  $c_{\l} \longrightarrow 0$ and $\l\rightarrow 0^+$; more specifically, in the light of Remark \ref{remnonexact} and \eqref{modifiedHam}, we are interested in the limit of $\frac{c_\l}{\l}$.\\
One can easily consider the case in which  $c_\l= \l\, c_0$. 
In this case Proposition \ref{convergencetoalpha} reads:  $\l \bar{u}_{\l,c_\l}$ converges uniformly to $-\a(c_0)$ as $\l \rightarrow  0^+$. The proof is the same, choosing the new Hamiltonian $\tilde{H}(x,p) = H(x,c_0 + p)$ (see Remark \ref{remnonexact} and \eqref{modifiedHam}). In particular, all other proofs (given for $c=0$)  adapt similarly to this case, up to substitute the limit zero cohomology class with $c_0$.
\end{remark}

\begin{remark}\label{remconv}
(i) In \cite{DFIZ} the authors proved that $\bar{u}_{\l} + \frac{\a(0)}{\l}$ uniformly converges as $\l \to 0^+$ to a specific solution to the classical
Hamilton-Jacobi equation.\\
(ii) A similar convergence result as in Corollary \ref{corolconvMat}  does not hold in general for the Aubry set. Consider for example a vector field $X$  on a closed surface $\Sigma$ and let  $H(x,p)=\frac{1}{2}\|p\|_x^2 + \langle p, X(x)\rangle_x$ be the associated Ma\~n\'e Hamiltonian (see Example \ref{exmane}).
As we have seen in Proposition \ref{AubryLaggraph} for each $\l>0$ the Aubry set
$\cAt_{L,\l} = {\rm Graph}(X)$, so $$\lim_{\l \to 0^+} \cAt_{L,\l} =  {\rm Graph}(X).$$
On the other hand, the Aubry set $\cAt_L$ for the conservative case might be smaller (see Example \ref{exmane}). In fact, as it was proven in \cite[Theorem 1.6]{FFR}, under these assumptions the projected Aubry set $\cA_L = \pi(\cAt_L)$  corresponds to the set of chain-recurrent points for the flow of $X$ on $\Sigma$; hence, it may happen that it is only properly contained in $\Sigma$.\\
\end{remark}

\section{Examples} \label{sec:examples}
Let us discuss some illustrative examples of conformally symplectic vector fields and describe the corresponding Aubry-Mather sets.\\
\begin{example}[{\bf Integrable CS Vector Fields}]\label{exint}
Let $h: \R^n \longrightarrow \R$ be a strictly convex and superlinear $C^2$ function and consider the vector field  on $T^*\T^n $given by
\begin{eqnarray*}
\left\{
\begin{array}{l}
\dot{x} = \frac{\partial h}{\partial p}(p)\\
\dot{p} = -\l p + \eta
\end{array}
\right.
\end{eqnarray*}
where $x\in \T^n$, $p\in \R^n$, while $\l>0$ and $\eta\in \R^n$ are fixed.\\
It is easy to check that the Lagrangian submanifold $\Lambda_{\l, \eta}= \T^n \times \{\frac{\eta}{\l}\}$ is invariant and that the motion on it corresponds to a rotation with rotation vector 
$\frac{\partial h}{\partial p}(\frac{\eta}{\l})$. In particular we have that
$$
\cK_{h,\l}=\cA^*_{h,\l}=\cM^*_{h,\l}=\Lambda_{\l, \eta}.
$$
All orbits that do not lie on this invariant manifold are asymptotic to $\Lambda_{\l, \eta}$. In fact, the equation $\dot{p} = -\l p + \eta$, with initial condition $p(0)=p_0$, is easy to integrate and one obtains: 
$$
p(t) = C e^{-\l t} + \frac{\eta}{\l}
$$
where $C=C(p_0)=p_0-\frac{\eta}{\l}$ is a constant depending on the initial condition (observe that it vanishes when $p_0=\frac{\eta}{\l}$); in particular, $p(t)\longrightarrow \frac{\eta}{\l}$ as $t\rightarrow +\infty.$\\

If we consider the limit from the dissipative to the conservative case, observe that when $\lambda$ goes to zero, also $\eta$ must converge to zero (otherwise the limit system does not correspond to a Hamiltonian system on $T^*\T^n$ anymore). In particular, what really matters is the value of the limit $\frac{\l}{\eta}$ as $\lambda$ goes to zero: if this limit exists and is equal to some $c\in \R$, then  $\Lambda_{\l, \eta}$ converges to the invariant tours $\T^n\times \{c\}$.

\begin{remark}
Let $H(x,p):\T^n\times\mathbb{R}^n \longrightarrow \R$ be a Tonelli Hamiltonian of the form $H(x,p)=h(p)+\e H_1(x,p)$ where $h: \R^n \longrightarrow \R$ is a strictly convex and superlinear $C^2$ function, $\e>0$, $\l>0$ and $\eta\in \R^n$, and let us consider the {\it quasi-integrable} CS vector field given by
\begin{eqnarray*}
\left\{
\begin{array}{l}
\dot{x} = \frac{\partial h}{\partial p}(p)+\e \frac{\partial H_1}{\partial p}(x,p)\\
\dot{p} = -\e\frac{\partial H_1}{\partial x}(x,p)-\l p + \eta
\end{array}
\right.
\end{eqnarray*}
Different KAM approaches ({\it e.g.}, \cite{CCD, CCD2, Massetti, Moser67}) have been proposed to show the persistence -- under suitable assumptions and for small values of $\e$ -- of the invariant torus of the integrable case $\Lambda_{\l,\eta}$. This ``perturbed'' torus does coincide with the Aubry and the Mather 
sets that we constructed; in particular, it continues to be a local attractor \cite{CCD2}.
\end{remark}
\end{example}

\medskip

\begin{example}[{\bf The dissipative pendulum}]\label{expendulum} 

Let us consider the  mechanical system obtained by adding a dissipative force proportional to the velocity to the simple pendulum equation (what is generally called the {\it dissipative pendulum}). 
The corresponding  Hamiltonian $H$ is defined on $T^*\T = \T\times\R$, with $\T =\R/2\pi\Z$, by $H(x,p)=\frac{1}{2}p^2 - (1 - \cos x)$. The corresponding Lagrangian $L(x,v)=\frac{1}{2}p^2 + (1 - \cos x)$ is defined on $T\T=\T\times\R$. 
The associated CS vector field is:  
\begin{eqnarray}\label{pendeq}
\left\{
\begin{array}{l}
\dot{x} = p\\
\dot{p} = \sin  x-\l p 
\end{array}
\right.
\end{eqnarray}
Let us now make some observations.
\begin{itemize}
\item[i)] 
We have that $H\geq -2$ and 
\[
\frac{d}{dt}H(x(t),p(t))=-\l p(t)^2
\]
so that $H$ is a Lyapunov function. For every $c> 0$  consider the  forward invariant set 
\[
M_c =\{(x,p)\in T^*\T \: : {H(x,p) < c}  \}.
\]
Applying the LaSalle invariant principle \cite{LaSalle} on $M_c$ we have that $\Omega_\infty(M_c)$ is contained in the largest forward invariant set in $\{\dot{H} = 0  \}$. Hence,  $\Omega_\infty(T^*\T)\subseteq P_1\cup P_2$ where  $P_1 = (0,0)$ and $P_2= (\pi,0)$ are the only equilibria of the system. Moreover, since $P_1$ is a saddle, by the stable manifold theorem, there exist (exactly) two orbits approaching it for $t\rightarrow +\infty$. We can apply LaSalle principle to a neighborhood of $P_2$ to get that it is asymptotically stable. Then, we have that $\Omega_\infty(T^*\T)=  P_1\cup P_2$.
 The basins of attractions of these two equilibria are different: all of the orbits converging to $P_1$ stay on its stable manifold, while the basin of attraction of $P_2$ is the rest of $T^*\T$ (see Figure \ref{fig:pend}-(a)).\\
\item[ii)] The unique solution $\bar u_\l$ to the associated $\l$-discounted Hamilton-Jacobi equation (see (\ref{defu})) enjoys some symmetries.
In fact, first note that if $(x(t),p(t))$ is an orbit of (\ref{pendeq}), then also $(2\pi-x(t),-p(t))$ is an orbit (we slightly abuse of notation, thinking of the lifted system on the covering space $\R^2$). Hence the system (\ref{pendeq}) is invariant under the action of the involution $\cI (x,p)=(-x,-p)$ defined on $T^*\T$. Since in this case $\cL(x,v)=(x,p)$, the same holds for the corresponding Lagrangian system. Moreover, both $L$ and $H$ are invariant under the action of $\cI$.
Therefore,   if $\gamma_x$ realizes the minimum in (\ref{defu}) so does $\gamma_{1-x}=\cI\circ\cL^{-1}(\gamma_x,\dot\gamma_x)$. Hence $\bar u_\l(x)=\bar u_\l(-x)$.\\
\item[iii)] We know that for every $x\in\T$ there exists $\g_x:(-\infty,0]\longrightarrow \T$ such that $\g_x(0)=x$ and which is $(\u,L)$-calibrated (see Proposition \ref{theoremonu}); in particular, using Propositions \ref{propdiffcal1} and \ref{propdiffcal2}, and the fact that in this case $\cL_L(x,v)=(x,p)$, we have that $\u$ is differentiable in $\g_x((-\infty,0))$ and that
$\dot{\g}_x(t)=d\u (\g_x(t))$ for all $t\in (-\infty,0)$.\\
\end{itemize}
These facts and and the information on the symmetry of $\u$, are sufficient to determine, at least qualitatively, $\u$. More specifically, $\u$ is differentiable everywhere but at $x=\pi$; moreover, with reference to Figure \ref{fig:pend}-(a), the graph of  $d\u$ coincides on $[0,\pi)$ with the ``upper'' part of the unstable manifold of $P_1$, and on $(\pi,2 \pi]$ with the ``lower'' part of the unstable manifold of $P_1$.\\
Hence, $\cSt$ is the union of the closure of these two branches of separatrices. As a consequence, it follows from the definition of Aubry set \eqref{defA} that 
$$\cA_{H,\l}^*=P_1=\{(0,0)\}.$$
Moreover, there is only one invariant measure supported in $\cA_{H,\l}^*$, namely the Dirac's delta $\delta_{P_1}$; therefore (see Proposition \ref{propminimmeasure}):
$$\cM_{H,\l}^*=\cA_{H,\l}^*=P_1=\{(0,0)\}.$$
It comes from observation i) that the set $\cA_{H,\l}^*$ is not an attractor. Actually, being a saddle we can define its unstable manifold whose orbits approach the asymptotically stable point $P_2$ (cfr Remark \ref{remIntro}).

%
%
%
%

%
%
%
\begin{remark}
Let $F_{\l}(x,p)= \l \u(x) + H(x,p)$. From the symmetries of $H$ and $\u$ one deduces that
$F_\l(-x,-p)=F_\l(x,p)$ and $F_\l(x,-p)=F_\l(x,p)$.
It follows from these symmetries that $\cZ^0_{F_\l}$ is obtained by reflecting $\cL_L(\cSt)$ about the axis $x=\pi$. In particular, 
$\cZ^-_{F_\l}$ is the bounded region that it encloses  (see Figure \ref{fig:pend}-(b)).\\
Finally, we claim that the maximal attractor $\cK_{H,\l}$ is formed by the equilibria $P_1$ and $P_2$ and the unstable manifolds $W^u$ of $P_1$ (see Figure \ref{fig:pend}-(c)). \\
First,  observe that $P_1$, $P_2$ and $W^u$ are contained in $\cZ^0_{F_\l} \cup \cZ^-_{F_\l}$ and are invariant under the flow, therefore, $P_1\cup P_2\cup W^u \subseteq\cK_{H,\l}$. \\
Let us prove the other inclusion.  Consider a point $P\in \cK_{H,\l}$; since $\cK_{H,\l}$ is invariant, then the alpha-limit of $P$ is  contained in $\cK_{H,\l}\subseteq \cZ^0_{F_\l}\cup\cZ^-_{F_\l}$; in particular, since $H$ is Lyapunov function of the system -- see item i) above -- then the alpha-limit of $P$ must be contained in the set 
$\{\frac{d}{dt} H =0\} = \{p=0\}$. It follows from the equations of motion, that the only invariant sets contained in $\{p=0\}$ are $P_1$ and $P_2$.
If the alpha-limit is $P_2$ then $P\equiv P_2$, while if the alpha-limit is $P_1$ then $P\equiv P_1$ or $P\in W^u$. 
This shows that $P_1\cup P_2\cup W^u \supseteq\cK_{H,\l}$, and concludes the proof. 
\end{remark}

\begin{figure}[t!]
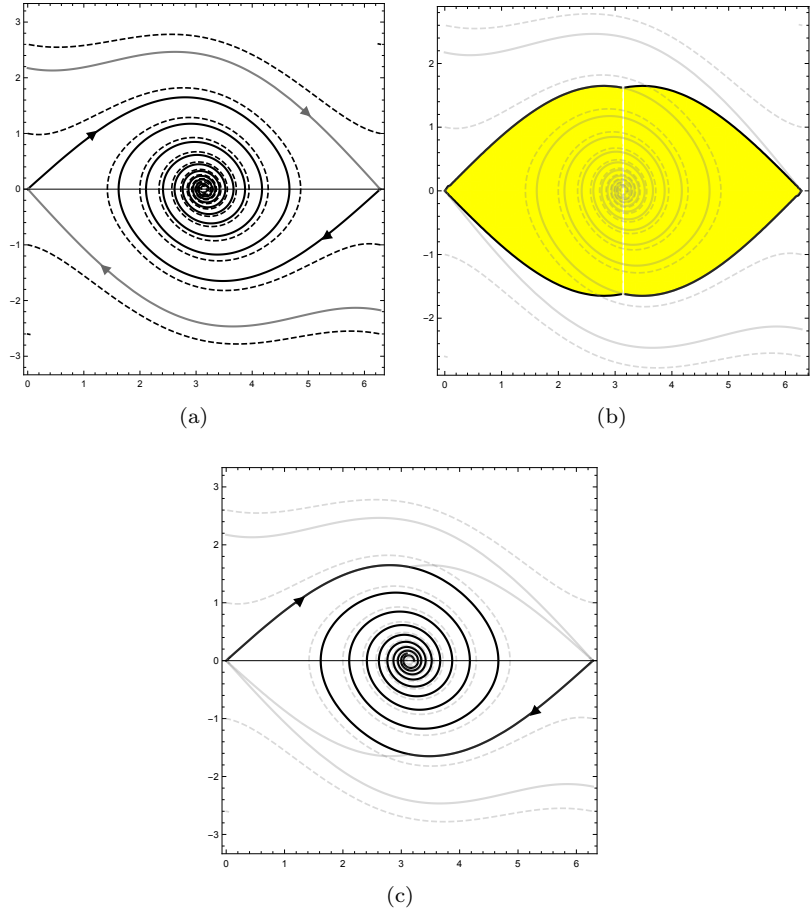
\label{fig:pend}
    \centering
    \begin{subfigure}[]
       {
        \includegraphics[scale=0.25]{./phase.pdf}
       }
    \end{subfigure} 
    \begin{subfigure}[]
       { \includegraphics[scale=0.4]{./zetazero.pdf} }
    \end{subfigure}\\
    \begin{subfigure}[]
   {     \includegraphics[scale=0.26]{./attractor.pdf} }
    \end{subfigure}
    \caption{The dissipative pendulum with $\l=1/5$. (a): Phase portrait where we highlight the stable and unstable manifolds of the saddle (thick gray and thick black respectively).(b) The sets $\cZ^0_{F_\l}$ (thick black line) and $\cZ^-_{F_\l}$ (shaded region). (c) The global maximal attractor formed by the unstable manifolds and the equilibria.}
\end{figure}
\end{example}

\bigskip

\begin{example}[{\bf Ma\~n\'e-like CS Vector Fields}]\label{exmane} 
Let $X$ be a vector field on $M$ and denote by $\varphi_X^t$ the associated flow. Consider the associated Ma\~n\'e Lagrangian $L(x,v)=\frac{1}{2}\|v-X(x)\|^2_x$. Observe that $L(x,v)\geq0$ and vanishes only on 
$${\rm Graph}(X)= \{(x,X(x))\!: \; x\in M\} \subset TM.$$
Let us consider the corresponding Hamiltonian $H(x,p)=\frac{1}{2}\|p\|_x^2 + \langle p, X(x)\rangle_x$. For every $\l>0$
we consider the vector field  on $T^*M $ defined by:
\begin{eqnarray*}
\left\{
\begin{array}{l}
\dot{x} = \frac{\partial H}{\partial p}(x,p) = p + X(x)\\
\dot{p} = -\frac{\partial H}{\partial x}(x,p) -\l p.  
\end{array}
\right.
\end{eqnarray*}
It is easy to check that the function $\u\equiv 0$ is the unique solution of (\ref{eqHJ}); hence, in the light of Proposition \ref{invariantlagrangiangraph}, we can conclude that the zero section $\cO \subset T^*M$ is invariant (clearly, it is Lagrangian and exact). In particular, the vector field restricted on it becomes 
\begin{eqnarray*}
\left\{
\begin{array}{l}
\dot{x} = X(x)\\
\dot{p} = 0.  
\end{array}
\right.
\end{eqnarray*}
Therefore, the flow $\Phi^t_{H,\l}$ on $\cO$ is smoothly conjugated to $\varphi_X^t$ (the conjugation is the projection $\pi_{|\cO}:\cO \longrightarrow M$). \\
Observe that the dynamics on this invariant manifold can be very complicated. For examples, the recurrent set might contain invariant measures with different homology (or rotation vector).
As a simple example consider the following (see also \cite[Remark 3.3.5 ({\it iv})]{Sorrentinobook}). Let $M=\T^2= \R^2/(2\pi\Z)^2$ equipped with the flat metric and consider a vector field $X$ with norm $1$ and such that $X$ has two closed orbits $\g_1$ and $\g_2$ and any other orbit asymptotically approaches $\g_1$ in forward time and $\g_2$ in backward time; for example one can consider $X(x_1,x_2)=(\cos( x_1), \sin( x_1))$, where $(x_1,x_2)\in \T^2$. Let us denote by $\tilde{\g}_1$ and $\tilde{\g}_2$ the lifts of these orbits on ${\rm Graph(X) \subset TM}$.
One can check that:
\begin{itemize}
\item It comes from proposition \ref{AubryLaggraph} that  $\cAt_{L,\l} = {\rm Graph}(X)$ 
\item The only ergodic invariant probability measures supported in $\cAt_{L,\l}$, are those supported on $\tilde{\g}_1$ and $\tilde{\g}_2$. Therefore 
$$
\cMt_{L,\l} = \tilde{\g}_1 \cup \tilde{\g}_2 \subsetneq \cAt_{L,\l}.\\
$$
\end{itemize}
\end{example}

\vspace{10 pt}


\end{document}